\numberwithin{equation}{section}
\newtheorem{theorem}{Theorem}[section]
\newtheorem{cor}[theorem]{Corollary}
\newtheorem{lemma}[theorem]{Lemma}
\newtheorem{prop}[theorem]{Proposition} \theoremstyle{definition}
\newtheorem{definition}[theorem]{Definition}
\newtheorem{rem}[theorem]{Remark}
\theoremstyle{plain}
\newcommand{\bmat}{\left[\begin{matrix}}
\newcommand{\emat}{\end{matrix}\right]}
\newcommand{\col}[1]{\begin{bmatrix}#1\end{bmatrix}}
\def\beq{\begin{equation} } \def\eeq{\end{equation}}
\DeclarePairedDelimiter\abs\lvert\rvert
\def\RR{\mathbb R}
\def\ben{\begin{enumerate} }
\def\een{\end{enumerate} }
\def \R{ {\mathbb R}}
\def \p { \partial}
\def \l{ \langle }
\def \Id{ \text{Id} } 
\def \div{ \text{div}}
\def \Op{ Q} 
\renewcommand{\restriction}{\mathord{\upharpoonright}}
\def\trac{\mathcal{T}}
\newcommand\PS{{P\!/\!S}}
\begin{document}
\title[Interface recovery from reflections]{Recovery of wave speeds and density of mass across a heterogeneous smooth interface from acoustic and elastic wave reflection operators}

\author[1]{\fnm{Sombuddha} \sur{Bhattacharyya}}\email{sombuddha@iiserb.ac.in}

\author[3]{\fnm{Maarten V.} \sur{de Hoop}}\email{mdehoop@rice.edu}

\author*[2]{\fnm{Vitaly} \sur{Katsnelson}}\email{vkatsnel@nyit.edu}

\author[1]{\fnm{Gunther} \sur{Uhlmann}}\email{gunther@math.washington.edu}

\affil*[1]{\orgdiv{Department of Mathematics}, \orgname{Indian Institute of Science Education and Research}, \city{Bhopal}, \country{India}}

\affil[2]{\orgdiv{College of Arts and Sciences}, \orgname{New York Institute of Technology}, \orgaddress{ \city{New York}, \state{NY}, \country{USA}}}

\affil[3]{\orgdiv{Department of Mathematics}, \orgname{University of Washington}, \orgaddress{ \city{Seattle}, \state{WA}, \country{USA}}}

\abstract{
We revisit the problem of recovering wave speeds and density across a curved interface from reflected wave amplitudes. 
Such amplitudes have been exploited for decades in (exploration) seismology in this context. However, the analysis in seismology has been based on linearization and mostly flat interfaces. Here, we present a nonlinear analysis allowing curved interfaces, establish uniqueness and provide a reconstruction, while making the notion of amplitude precise through a procedure rooted in microlocal analysis. 
}

\maketitle 
\noindent{\it Keywords\/}: inverse problems, elastic wave equation, acoustic wave equation, microlocal analysis
\newline

\subsection*{Acknowledgements
} M.V.d.H. gratefully acknowledges support from the Simons Foundation under the MATH + X program, the National Science Foundation under grant
DMS-1815143, and the corporate members of the Geo-Mathematical Imaging Group at Rice University. G.U. was partly supported by NSF, a Walker Family Endowed Professorship at UW and a Si-Yuan Professorship at IAS, HKUST. S.B. was partly supported by Project no.: 16305018 of the Hong Kong Research Grant Council.

\section{Introduction}
We consider the problem of recovering piecewise smooth wave speeds and density across a curved interface from reflected wave amplitudes. 
Such amplitudes have been exploited for decades in seismology in this context. However, the analysis in seismology uses a linearization and assumes mostly flat interfaces. Here, we present a nonlinear analysis allowing curved interfaces, establish uniqueness and provide a reconstruction, while making the notion of amplitude precise through a procedure rooted in microlocal analysis. While our focus is on elastic waves in isotropic media, we consider in parallel the acoustic case with vanishing shear modulus.
By measuring the amplitudes of reflected acoustic or elastic waves above a curved interface at various incidence angles, we recover the jet of the material parameters infinitesimally below the interface as well as the shape operator associated to the interface.

Following the notation in \cite{SUV2019transmission} and \cite{Hansen-CPDEInverse}, let $\Omega \subset \RR^3$ be a smooth, bounded domain and $\Gamma$ a closed, smooth hypersurface splitting $\Omega$ into two subdomains $\Omega_+$ and $\Omega_-$. For the isotropic elastic wave equation, we assume the density of mass $\rho$ and Lam\'{e} parameters $\lambda, \mu$ are smooth up to the surface $\Gamma$ with a possible jump there.
  When an acoustic or elastic wave hits an interface, the strength of the reflected wave depends not only on the material parameters infinitesimally above and below the interface, but it also depends on the angle of incidence and the curvature of the interface. There is a certain reflection operator that we denote $R$ throughout this work, which will be a PsiDO of order zero on the interface, that determines the amplitude of a reflected wave. Concretely, we aim to recover all material parameters (and their derivatives) directly below an interface from knowledge of $R$ and the material parameters above an interface. We will also determine the curvature of the interface from such data. This is a variant of the boundary determination problem (see for example \cite{RachBoundary}) from the hyperbolic Dirichlet-to-Neumann map, but in our case, reflected amplitudes (specifically, we use the full symbol of $R$) is in place of the Dirichlet-to-Neumann map. We are not aware of mathematical literature of this exact problem even though there is plenty of geophysical literature on a simplified version of this problem (see \cite{DavydenkoScatteringandReflection,HammadAVO,Skopintsevacurvatureofinterface}). We treat this inverse problem both in the acoustic and elastic wave setting.

The result closest to ours was obtained by Rachele in \cite{RachBoundary}. She showed that one can uniquely determine the Lam\'{e} parameters and density of mass (including all their derivatives) at the boundary of a domain from the hyperbolic Dirichlet-to-Neumann map.
  Aside from boundary recovery, through asymptotic analysis, following the propagation of singularities, amplitudes and reflection coefficients have been used by seismologists to obtain wave speeds and density of mass just below an interface. The procedure, which is derived from a linearization of the inverse problem considered here, is termed ``amplitude versus offset analysis'' (AVO). The procedure has ``locally'' elastic plane waves sent into an elastic medium with a reflector at various angles and their reflected amplitudes are measured. The amplitude variation due to change in angle of a wave hitting the reflector indicates contrasts in lithology, shear properties, and fluid content in rocks above and below the interface. Recent work for this type of analysis in heterogeneous media can be found in \cite{HammadAVO}, and an inverse problem that incorporates both multiple scattering and recovering reflection coefficients can be found in \cite{DavydenkoScatteringandReflection}. A genetic algorithm for nonlinear recovery of material parameters from reflection coefficients can be found in \cite{deHoopAVO00}. We also refer to \cite{HoopAVO_1997} for an inverse problem with anisotropic media and reflection coefficients. Our analysis here gives a concrete mathematical framework and proof that at least in an isotropic setting (we will study anisotropic settings in another work), one can determine the full elastic properties and density across an interface from reflection operators. Most works in the geophysics literature assumed some type of homogeneity and a simple interface, while several works such as \cite{Skopintsevacurvatureofinterface,Cerveny1974Curvature} consider curved interfaces and their effect on the reflection coefficients. We make no simplifying geometric assumptions about the interface except that it is a smooth hypersurface, and a nice byproduct of our construction, aside from the inverse problem, shows concretely the effect of curvature on the reflection operators. In addition, we do not linearize the problem as is usually done in articles on AVO. Much of our proof is constructive, and will serve as a basis for reconstruction algorithms.

  Our primary motivation is to eventually recover a piecewise smooth density of mass (in the isotropic elastic wave equation) in the interior of the domain, which we present in a subsequent paper. We essentially want to ``image'' the density using high frequency waves. 
  In fact, one may recover piecewise smooth wavespeeds without using reflected amplitudes as in \cite{SUV2019transmission}. In \cite{SUV2019transmission}, Stefanov, Vasy, and Uhlmann first construct the parametrix for the isotropic elastic wave equation away from any glancing rays. They use the principal symbol of the parametrix (in particular, the polarization set) to then recover local travel times for the $P$ and $S$ wavespeeds near a particular interface. By using rays that are near tangential to the interface, they can recover travel times (for both the $P$ and $S$ wavespeeds) between two nearby points at the interface. This allows them to recover the wavespeeds initially at the interface, and then in the interior using local boundary rigidity theorems.
  This argument only relies on the principal symbol of the elastic operator and the parameterix. As noted in their \cite[Remark 10.2]{SUV2019transmission}, their argument does not address unique determination of the density of mass past the first interface, nor at the interface itself. Since the density appears in the lower order part of the elastic operator, it is natural to look at the lower order symbols of the elastic parametrix to initially recover the density at the interface. This leads to the inverse problem considered here where we study the full symbol of the reflection operator, which is a constituent of the elastic parameterix \cite{CHKUElastic}, to recover the jet of the density of mass at the interface.

   In the smooth setting, boundary determination of material parameters is usually needed to prove uniqueness in the interior \cite{Oksanen2020}, while in our setting where material parameters have jump discontinuities, unique determination at the interface is needed to solve the interior problem for the density of mass (recovering piecewise smooth wavespeeds in the interior may already be done without such an interface determination result as in \cite{SUV2019transmission}). The focus of this paper is to prove an interface determination result from reflected amplitudes.
  The proof leads us to compute the full symbol of the acoustic and elastic reflection (and transmission) operators that are used to construct a parametrix to solve the acoustic/elastic wave equation near an interface. We do not know of any literature that has done this computation beyond the principal symbol level, while the principal symbol is computed in many works such as \cite{SUV2019transmission,CHKUElastic,Knott1899,Zoeppritz1919} so that this paper may also be viewed as a generalization of these results. We provide a toy example to illustrate the type of inverse problem we are after.

  Consider a simple half space in $\RR^3$
  with a flat interface $\Gamma$ that separates a layer above denoted $\Omega_-$ and a layer below $\Omega_+$. Suppose that there are two piece-wise constant material parameters $c$ and $\rho$. Let $c_{\pm}$ be $c$ restricted to $\Omega_\pm$ and likewise for $\rho.$ An elastic $P$-wave (say) that hits $\Gamma$ at angle $\theta$ from the normal, with transmitted angle $\theta_t$, the reflection coefficient that determines the reflected wave amplitude is
  \[R(\theta) = \frac{\rho_+c_+\cot(\theta) -\rho_-c_-\cot(\theta)}{\rho_+c_+\cot(\theta) + \rho_-c_-\cot(\theta)}.\]
  Ideally, one would like to reconstruct $c_+ - c_-$ and $\rho_+ - \rho_-$ from $R$ at various angles. Due to the nonlinearities involved, this is difficult even in this simplest of settings. Instead, we are interested in determining $c_+$ and $\rho_+$ from knowing $c_-, \rho_-$ and $R$ for different $\theta$ values. In our setting, we allow $c$ and $\rho$ to be piecewise smooth functions and $\Gamma$ is not restricted to be flat. Hence, we want to determine all derivatives of $c_+$ and $\rho_+$ restricted to $\Gamma$, and the shape operator of $\Gamma$.

 We consider two types of wave fields: acoustic and elastic waves, defined in a bounded domain $\Omega\subset \mathbb{R}^3$.
Though the inverse problems for the acoustic as well as the elastic waves have been studied extensively in the last few decades, a large portion of them concerns domains with smooth material parameters. This manuscript is the analog to a boundary determination result that will enable future results in interior determination, including the density of mass, when material parameters contain conormal singularities. The history of boundary determination problems of a coefficient from the Dirichlet-to-Neumann map is summarized in \cite{RachBoundary}. In the case of the conductivity equation involving an elliptic partial differential operator, Kohn and Vogelius \cite{KV84} and then Sylvester and Uhlmann \cite{SylU88}, prove boundary determination in the case of real analytic and then $C^\infty$ conductivities.  Sylvester and Uhlmann \cite{SylU87} and Nachman \cite{Na96} use the result at the boundary to show an interior uniqueness result in certain situations.

As described in \cite{RachBoundary}, in the case of a scalar hyperbolic wave operator associated with a Laplace-Beltrami operator $\Delta_g$ for a metric $g$, Sylvester and Uhlmann \cite{SylU91} show that the Dirichlet-to-Neumann map uniquely determines the metric (up to the pullback by a diffeomorphism of $\Omega$ that fixes $\p \Omega$) to infinite order at the boundary $\Omega$. In this setting, boundary determination of multiple parameters with stability results can be found in \cite{Montalto2014} and \cite{stefanovYang2018}.
For the elastic setting, Nakamura and Uhlmann have solved the inverse problem for elasticity in the static case in \cite{NU93,NU94} together with the erratum \cite{NU03Erratum}, and independently in \cite{Eskin_2002}, Eskin and Ralston proved uniqueness for both Lam\'{e} coefficients in the isotropic setting when the Lam\'{e} paramater $\mu(x)$ is close to a constant.
  The classic paper for boundary determination of parameters for the isotropic elastic wave equation is \cite{RachBoundary}. An extension of that result to elastic media with residual stress is in \cite{RachResidualStress}, and boundary determination in certain anisotropic cases can be found in \cite{HNZ19}.  Our inverse problem is an analog to these results where the interface acts as our boundary and we instead determine the jet of multiple material parameters at one side of an interface from the reflection operator and knowledge of the material parameters on the other side of the interface.

There is a natural forward problem associated to the inverse problem of this paper. In his 1975 paper \cite{Taylor75}, Michael Taylor microlocally analyzed the reflection and transmission of waves from a boundary or an interface. The scattering due to the boundary was governed by a boundary PsiDO denoted $\beta \in \Psi^0(\p \Omega)$ in \cite{Taylor75} (giving boundary conditions) under certain geometric assumptions such as away from any glancing rays. Transmission conditions can locally be written as a boundary value problem (see \cite{Taylor75,yamamoto1989}) with such a PsiDO as well.
Taylor uses the original operator and the boundary conditions to construct tangential pseudodifferential operators, that he denotes $P^I,P^{II}, P^{III}, P^{IV}$, near the boundary. For a vector valued solution $u$ to a hyperbolic partial differential equation with boundary conditions, $P^I u\restriction_{\Omega}$ roughly represents the trace at the boundary of the ``incoming waves'' and $P^{II}u\restriction_{\Omega}$ represents the trace at the boundary of ``outgoing waves''. The boundary condition leads to a pseodifferential equation involving a derived operator $\gamma$ at the boundary \cite[Equation (3.2)]{Taylor75}. When this equation is elliptic, one may construct a parametrix near the boundary with a constituent at the boundary relating the incoming and outgoing waves. Our inverse problem is: Given the full symbol of several entries of a matrix PsiDO $\gamma$ (or entries of a pseudodifferential operator involving submatrices of $\gamma$) that microlocally determines the amplitudes of scattered waves at an interface, can we recover the jets of certain parameters of the partial differential operator at the boundary, in particular for PDEs describing acoustic and elastic waves? We are not aware of any such results for interfaces.

\subsection{Basic notations and definitions}\label{Sec_Setup}
In this article we consider two types of wave operators, one is the acoustic wave operator $P$, given by (\ref{Acoustic_OP}) defined on a function $u(t,x)$, the second is the linear, isotropic elastic wave operator $Q$, given in (\ref{eq_2}), acting on a vector-field $u(t,x) = (u_1(t,x),u_2(t,x),u_3(t,x))$.
Here we fix our basic assumptions and notational conventions for the rest of the article. Though we work with two types of wave operators, the following discussion in this section is common for both of them. Later we divide this article into two sections, each dedicated to the two types of waves.

Throughout this article, we will work on the space $(0,\infty)\times \R^3$ or its subsets.
We denote $(t,x)$ to be the coordinates on the space $(0,\infty)\times \R^3$.
Let $\Omega \subset \R^3$ be an open bounded domain with smooth boundary.
We assume that the parameters $\mu,\rho$ in the case of the acoustic waves and $\lambda,\mu,\rho$ in the case of the elastic waves are piecewise smooth functions in $\Omega$. Concretely, we assume that $\lambda, \mu, \rho$ are smooth on $\bar \Omega$ except for a jump discontinuity at a smooth closed connected hypersurface $\Gamma \subset \Omega$.
In general, $\Gamma$ can be a collection of disjoint closed connected, orientable hypersurfaces, but we can deal with multiple interfaces via an iterative argument. For the purpose of this article, we restrict ourselves to the fact that $\Gamma$ is a single smooth closed hypersurface in $\Omega$.
 We define $\Omega_{\pm}$ to be the portions of $\Omega$ on the two sides of $\Gamma$, where $\Omega_{-}$ is the portion outside $\Gamma$ and $\Omega_{+}$ is the part inside $\Gamma$.

Let $u_I$ be a wave field (Acoustic/Elastic), travelling through $\Omega_{-}$, approaching the interface $\Gamma$. We write the suffix $I$ to indicate $u_I$ to be an incoming wave field (see \cite{SU-TATBrain, SUV2019transmission} for more details).
After hitting the interface $\Gamma$, the wave field splits into two parts $u_R$ and $u_T$, where $u_R$ is the reflected wave field travelling through $\Omega_{-}$ and
$u_T$ is the transmitted wave field travelling through $\Omega_{+}$, perturbed by a refraction according to Snell's law.
The wave fields $u_{I}$, $u_R$, $u_T$ are standardly related by the transmission conditions corresponding to the acoustic or the elastic waves, given on $(0,\infty)\times\Gamma$.
 Thus, we can write the solution $u$ of the acoustic/elastic wave equation near an interface as (see \cite{SUV2019transmission})
\begin{equation}\label{e: u = uI + uR+uT}
u = u_I + u_R + u_T,
\end{equation}
where $u_I$ is the incident wave, $u_R$ is the reflected wave, and $u_T$ is the transmitted wave, where $u_T$ is supported in $\bar{\Omega}_+$ and $u_I,u_R$ are supported in $\bar\Omega_-$. Using various incident waves, we are interested in whether we can determine all the elastic parameters on $\Gamma^+$ from $u_R$. We restrict ourselves only to hyperbolic ``points'' (see \cite{SUV2019transmission}) and this is sufficient data. In this article we prove that, by knowing the material parameters on one side of the interface along with the knowledge of $u_R$ at the interface, one can determine those parameters and their derivatives (of any order) on $\Gamma$. The theorems are stated precisely for the acoustic case in Section \ref{Sec_Acoustic} and in Section \ref{s: elastic statement of theorem} for the elastic case.

Without loss of generality, we assume $\Gamma \subset \{x_3 = 0\}$ in $\R^3$ is a closed, connected smooth hypersurface. We show in Section \ref{s: nonflat case} how the general case of curved interfaces follows quite easily with some additional terms showing the effect of curvature on the reflection operator. Using local diffeomorphisms, we can set $\Gamma$ to be any closed, connected, smooth hypersurface in $\R^3$, but for the sake of simplicity, at present, we work with $\Gamma \subset \{x_3 = 0\}$. We write the $x_3 < 0 $ is ``above'' while $x_3 > 0$ is ``below'' the interface.
Since our analysis in this article is mostly on the interface $\Gamma$, therefore, we can shrink $\Omega$ to be a small neighborhood of $\Gamma$ in $\R^3$. For notational convenience, we denote $\Gamma_{\pm}$ to be two copies of $\Gamma$ when approached from $\Omega_{\pm}$. We add the suffix $(\pm)$ to denote parameters on the different sides of $\Gamma$. For instance, we write $\rho^{(\pm)}$ to denote the density function $\rho$ on the domains $\Omega_{\pm}$, and similarly for the other material parameters.
 We write $\Omega := \overline{\Omega}_{-} \sqcup \overline{\Omega}_{+}$ to be the disjoint union of $\overline{\Omega}_{+}$ and $\overline{\Omega}_{-}$, with $\Gamma_\pm$ included in the respective boundaries.
For any function $p(x)$ on $\Omega$, we denote $p\restriction_{\Gamma_\mp}$ as the limit of $p(x)$ as $x$ approaches $\Gamma$ from above/below. It will also be convenient to denote $p^{(\mp)} = p\restriction_{\Gamma_\mp}$. We also denote $\p_{\nu}p$ as the normal derivative to $\Gamma$ where $\nu$ is a fixed unit normal to $\Gamma$.

We consider two sets of parameters $(\mu,\rho)$, $(\widetilde{\mu},\widetilde{\rho})$ for the acoustic wave equation and $(\lambda,\mu,\rho)$, $(\widetilde{\lambda}, \widetilde{\mu},\widetilde{\rho})$ for the isotropic elastodynamic wave operator on $\Omega$.
We have two acoustic wave operators $P$ and $\widetilde{P}$, corresponding to the two sets of parameters $(\mu,\rho)$, $(\widetilde{\mu},\widetilde{\rho})$ and elastic wave operators $\Op$, $\widetilde{\Op}$ for the two sets of parameters $(\lambda,\mu,\rho)$, $(\widetilde{\lambda}, \widetilde{\mu},\widetilde{\rho})$ respectively.
 We use the notation $\widetilde{f}$ to refer to a corresponding quantity associated to $\widetilde{P}$(or $\widetilde{\Op}$) when $f$ is a quantity associated to $P$(or $\Op$). In the next two subsections, we state the main theorems in the acoustic and elastic cases.

\subsection{Notation and statement of the theorems}\label{s: elastic statement of theorem}

\subsection*{Acoustic case}
First, consider the acoustic wave equation written in the classical form as
 \begin{equation}\label{e: acoustic wave eq strong form}
 Pu := \rho\p_{t}^2u - \nabla_x\cdot \mu \nabla_{x}u =0,
\end{equation}
  where $u$ is a scalar function, and $\rho(x)$, $\mu(x)$ are two piecewise smooth functions. This is not the standard notation for an acoustic wave equation. Normally, one first considers the elastic equation (\ref{eq_2}), and $\kappa:= \lambda + 2/3\mu$ is the incompressibility (or bulk modulus). In the fluid regions, one sets $\mu \equiv 0$ in (\ref{eq_2}). One then obtains the acoustic wave equation for the pressure field $p = -\kappa \nabla_x \cdot \p_t u$, that is $\kappa^{-1} \p^2_{t} p - \nabla \cdot (\rho^{-1}\nabla p) = 0$. This notation is awkward to use in our paper since we will compare our formulas to that of \cite{RachBoundary, RachDensity}. In order to make the comparison clearer, we replace $\kappa^{-1}$ by $\rho$ and $\rho^{-1}$ by $\mu$ to obtain (\ref{e: acoustic wave eq strong form}), and we will refer to it as the acoustic wave equation. This will allow for easy comparisons between our formulas and to those of \cite{RachBoundary, RachDensity}.

On a bounded domain $\Omega$ with smooth boundary $\partial\Omega$, the wave equation for $P$ with initial Cauchy data at time $t=0$ and a boundary condition on $(0,\infty)\times \partial\Omega$ is well-posed. The acoustic wave equation in an open, bounded domain $\Omega \subset \R^3$ with transmission conditions is given as
\beq\label{Acoustic_OP}
\begin{aligned}
Pu(t,x) := \left(\rho(x)\p_{t}^2 - \nabla_{x}\cdot\mu(x)\nabla_x\right)u(t,x) =& 0,\qquad &&\mbox{in }(0,\infty)\times\Omega\setminus \Gamma,\\
u\restriction_{\Gamma_-} =& u\restriction_{\Gamma_+}, \\
\mu \frac{\p u}{\p \nu}\restriction_{\Gamma_-} =& \mu \frac{\p u}{\p \nu}\restriction_{\Gamma_+},\\
u(t,x) =& f(t,x) \quad &&\mbox{on }(0,\infty)\times\partial\Omega,\\
u\restriction_{t=0} = 0, \quad &\partial_t u\restriction_{t=0} = 0\quad &&\mbox{on }\Omega,
\end{aligned}
\eeq
where as in \cite{SU-TATBrain}, $u\restriction_{\Gamma_\mp}$ is the limit value (the trace) of $u$ on $\Gamma$ when taking the limit from ``above'' and from ``below'' $\Gamma$ respectively. We will denote this limit when applied to a parameter $c$ by $c^{(\pm)}$. We similarly define the interior and exterior normal derivatives, and $\nu$ is the exterior unit (in the Euclidean metric) normal to $\Gamma$. The conditions at $\Gamma_\pm$ are called the \emph{transmission conditions.}
They can be shortened to $[u] = 0$, and $[\mathcal N u]=0$ where $[v]$ stands for the jump of $v$ from the exterior to the interior across $\Gamma$ and $\mathcal N$ is the normal operator: $\mathcal N u = \mu \frac{\p u}{\p \nu}$.
We consider the parameters $\mu$ and $\rho$ to be piecewise smooth in $\Omega$.
The set of discontinuities of $\mu$ and $\rho$ is known as the \emph{interface} and denoted by $\Gamma$.

We denote $c_S:=\sqrt{\mu/\rho}$ to be the wave speed in $\Omega$.
For each given $f$, a solution $u$ to \eqref{Acoustic_OP}, can be written microlocally near the interface in the form (\ref{e: u = uI + uR+uT}), with $u_I$ the incoming wave hitting $\Gamma_-$, $u_R$ the reflected wave, and $u_T$ the transmitted wave initially moving away from $\Gamma_+$ inside $\Omega_+$ (see \cite[Section 4]{SU-TATBrain} for the construction).\footnote{The notion of incoming and outgoing is characterized in terms of its wavefront set. See \cite{SUV2019transmission,SU-TATBrain}.} Denote
\begin{equation}\label{e: definition of h at Gamma}
h := \rho_{\Gamma_-}u_I \in \mathcal{E}'(\Gamma_- \times \R_t),
\end{equation}
 where $\rho_{\Gamma^-}$ is the restriction to $\Gamma$ from $\Omega_{-}$. We assume $h$ is microsupported away from the glancing set (see \cite{SUV2019transmission} for the relevant definition). It is well known that \cite{SU-TATBrain}
 \[\rho_{\Gamma_-}u_R \equiv R(\rho_- u_I) = R h,
 \]
 where $R \in \Psi_{cl}^0(\Gamma_- \times \R_t)$ is the reflection operator (see \cite{CHKUControl,CHKUElastic}) derived explicitly in section \ref{s: acoustic zeroth order derivation} with principal symbol given by $(a_R)_0$ in equation \eqref{zeroth_order_wave}, and `$\equiv$' denotes equality modulo functions in the class $C^\infty(\Gamma_{\pm} \times \R_t)$, and when used between pseudodifferential operators, it means equality modulo operators in $\Psi^{-\infty}$. In our notation, $R$ has principal symbol $b_R^{(0)}\restriction_\Gamma$ in the notation of \cite[Section 4]{SU-TATBrain} and $R$ is constructed microlocally from $P$ and the transmission conditions so that if $\tilde P$ is another acoustic wave operator, there a corresponding reflection operator $\tilde R$. Since we are interested in recovering the material parameters and their derivatives on $\Gamma$, we can shrink $\Omega$ so that $\Omega_{\pm}$ becomes a small one-sided neighbourhoods of $\Gamma_{\pm}$. Hence, without loss of generality, we assume $\Omega$ to be a thin open neighbourhood of $\Gamma$ in $\R^3$.
\begin{theorem}\label{Th_main_acoustic}
Let $P$ and $\widetilde{P}$ be two acoustic wave operators with parameters $(\mu,\rho)$ and $(\widetilde{\mu},\widetilde{\rho})$ on $(0,T_0)\times\Omega$. We assume the notations defined above.
Suppose that $R \equiv \tilde R$ on $(0,T_0)\times\Gamma_-$ and $c_{S} =\tilde c_{S}$ and $\rho = \tilde \rho$ in a small neighbourhood of $\Gamma_{-}$ in $\Omega_-$.
Then $\p_{\nu}^j c^{(+)}_{S} = \p_{\nu}^j\tilde c^{(+)}_{S}$ and $\p_{\nu}^j\rho^{(+)} = \p_{\nu}^j\tilde \rho^{(+)}$ on $\Gamma_+$ for $j = 0, 1, 2, \dots$.
\end{theorem}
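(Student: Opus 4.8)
By the reduction carried out in Section~\ref{s: nonflat case} we may assume $\Gamma=\{x_3=0\}$ and $\nu=\partial_{x_3}$. The plan is to isolate from the data a single tangential $\Psi$DO on $\Gamma$ that depends only on the $(+)$--side, and then recover the jets of the parameters from its full symbol by an order--by--order argument modelled on the boundary determination of \cite{RachBoundary,RachDensity}. Following \cite{Taylor75,SU-TATBrain,SUV2019transmission}, one factorizes $P$ microlocally near $\Gamma$ on each side as $P\equiv-\mu\,(D_{x_3}-B^{+})(D_{x_3}-B^{-})$ modulo $\Psi^{-\infty}$, where $B^{\pm}$ are $x_3$--families of tangential operators in $\Psi^1$ whose principal symbols are the two roots $\pm\sqrt{\tau^2c_S^{-2}-|\xi'|^2}$ of the characteristic polynomial; in the hyperbolic region these roots are real and simple, so $B^{\pm}$ are well defined by the usual Riccati recursion. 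Let $\Lambda_-^{\mathrm{in}},\Lambda_-^{\mathrm{out}}\in\Psi^1$ be the Dirichlet--to--$\mathcal N$ maps ($u\restriction_\Gamma\mapsto \mu\partial_\nu u\restriction_\Gamma$) in $\Omega_-$ for the incoming resp.\ outgoing branch, and $\Lambda_+\in\Psi^1$ the outgoing one for $\Omega_+$; each is determined, as a family in $x_3$, by the coefficient jets at $\Gamma$ through $B^{\pm}$, and $\Lambda_+=i\,\mu^{(+)}B^{+}|_{x_3=0}$ at the level of full symbols (left multiplication by $\mu$ being exact).

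First I would derive the algebraic reduction. The transmission conditions $[u]=0$, $[\mathcal N u]=0$ together with $u=u_I+u_R$ in $\Omega_-$ and $u=u_T$ in $\Omega_+$ give, after eliminating the trace of $u_T$, the relation $u_R\restriction_\Gamma=(\Lambda_-^{\mathrm{out}}-\Lambda_+)^{-1}(\Lambda_+-\Lambda_-^{\mathrm{in}})\,u_I\restriction_\Gamma$ microlocally in the hyperbolic region; conjugating by the (known) multiplier $\rho_{\Gamma_-}$ this is exactly the action of $R$. Adding the identity and using $(\Lambda_-^{\mathrm{out}}-\Lambda_+)^{-1}(\Lambda_-^{\mathrm{out}}-\Lambda_+)=I$ one obtains the closed form
\begin{equation*}
\Lambda_+ \;=\; \Lambda_-^{\mathrm{out}}\;-\;\bigl(\Lambda_-^{\mathrm{out}}-\Lambda_-^{\mathrm{in}}\bigr)\,(R+I)^{-1}.
\end{equation*}
Here $R+I$ is elliptic of order $0$ in the hyperbolic region (its principal symbol is $2\mu_-k_-/(\mu_-k_-+\mu_+k_+)$, with $k_\pm=\sqrt{\tau^2c_{S,\pm}^{-2}-|\xi'|^2}>0$) and $\Lambda_-^{\mathrm{out}}-\Lambda_-^{\mathrm{in}}$ is elliptic of order $1$ (principal symbol $\propto\mu_-k_-$), so the right-hand side is a well-defined $\Psi^1$. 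Since $c_S=\tilde c_S$ and $\rho=\tilde\rho$ on the $(-)$--side, $\Lambda_-^{\mathrm{in}},\Lambda_-^{\mathrm{out}}$ and the conjugating multiplier agree, and $R\equiv\tilde R$ then forces $\Lambda_+\equiv\tilde\Lambda_+$ modulo $\Psi^{-\infty}$; as the symbols involved are rational in $(\tau,\xi')$, determination on an open cone of hyperbolic directions pins down the full symbol of $\Lambda_+$ everywhere. Thus the theorem is reduced to: \emph{the full symbol of the outgoing DtN operator $\Lambda_+$ of $P$ in $\Omega_+$ determines $\partial_\nu^j c_S^{(+)}$ and $\partial_\nu^j\rho^{(+)}$ on $\Gamma$ for all $j$}.

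Next I would run the induction on $j$. Apply the Riccati recursion to the full symbol $p=p_2+p_1$ with $p_2=\mu(\xi_3^2+|\xi'|^2-\tau^2c_S^{-2})$, $p_1=-i(\nabla\mu)\cdot\xi$: the term of order $2-j$ gives $b^{+}_{1-j}$ in terms of $b^{+}_1,\dots,b^{+}_{2-j}$, their $x_3$--derivatives and $p$, so that $b^{+}_{1-j}\big|_{x_3=0}$ is an explicit rational function of $(\tau,\xi')$ depending on the $j$--jets of $c_S^{(+)},\mu^{(+)}$ at $\Gamma$, with $\partial_\nu^j c_S^{(+)}$ and $\partial_\nu^j\mu^{(+)}$ entering and everything else already known. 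For $j=0$ one has $\sigma_1(\Lambda_+)=i\mu^{(+)}\sqrt{\tau^2c_{S,+}^{-2}-|\xi'|^2}$; squaring and matching the coefficients of $\tau^2$ and $|\xi'|^2$ recovers $\mu^{(+)}$ (positive root) and $\mu^{(+)2}c_{S,+}^{-2}=\mu^{(+)}\rho^{(+)}$, hence $\rho^{(+)}$ and $c_S^{(+)}$. Assuming inductively that the $(j-1)$--jets of $c_S^{(+)},\mu^{(+)}$ are known as functions on $\Gamma$ (hence also all their tangential derivatives), the known symbol $\sigma_{1-j}(\Lambda_+)=i\mu^{(+)}b^{+}_{1-j}|_{x_3=0}$ is an explicit expression in the two scalars $\partial_\nu^j c_S^{(+)},\partial_\nu^j\mu^{(+)}$; matching two independent components of this rational function of $(\tau,\xi')$ solves for them, and $\partial_\nu^j\rho^{(+)}$ follows from $\rho=\mu c_S^{-2}$.

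The main obstacle is the last step: carrying out the Riccati recursion explicitly enough to see that $\partial_\nu^j c_S^{(+)}$ and $\partial_\nu^j\mu^{(+)}$ occur in $b^{+}_{1-j}|_{x_3=0}$ \emph{non-degenerately}, i.e.\ that the single homogeneous symbol $\sigma_{1-j}(\Lambda_+)$ carries enough $(\tau,\xi')$--dependence to disentangle the two unknowns given the lower jets. This is the delicate inductive lemma, entirely analogous to the one in Rachele's boundary-determination argument \cite{RachBoundary,RachDensity}; since the paper computes the full symbol of $R$ (equivalently of $\Lambda_+$) in any case, the cleanest route is to read the required non-degeneracy off those formulas rather than reprove it abstractly. (One also records the routine points that the hyperbolic region is open and nonempty, that $R+I$ and $\Lambda_-^{\mathrm{out}}-\Lambda_-^{\mathrm{in}}$ are elliptic there, and that glancing/total-reflection directions are avoided by the standing assumption on the microsupport of $h$.)
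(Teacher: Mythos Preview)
Your approach is correct and genuinely different from the paper's. The paper works \emph{directly} with the geometric-optics expansion of $R$: it computes the symbols $(a_R)_J$ order by order from the transport equations and the transmission conditions (equations~\eqref{zeroth_order_wave}, \eqref{e: transmission conditions}), and at each step shows that $(a_R)_J$ encodes $\partial_\nu^{|J|}c_S^{(+)},\partial_\nu^{|J|}\rho^{(+)}$ modulo lower-order jets (Lemmas~\ref{l: recover 2 params from 0'th order reflect}--\ref{l: acoustic higher derivatives derivation}). In other words, the paper never isolates a one-sided object like $\Lambda_+$; it keeps the two sides coupled throughout and simply tracks which terms are ``known'' ($R_j$ in their notation). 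Your route instead first \emph{decouples}: the identity $\Lambda_+=\Lambda_-^{\mathrm{out}}-(\Lambda_-^{\mathrm{out}}-\Lambda_-^{\mathrm{in}})(R+I)^{-1}$ reduces the transmission problem to a one-sided DtN, and then you invoke a Rachele-type boundary determination from $\Lambda_+$. This is cleaner conceptually and makes the dependence on the $(-)$-side data transparent at the outset; it also explains \emph{why} the inductive non-degeneracy you need is identical to the one in \cite{RachBoundary,RachDensity}, rather than merely analogous. The cost is that the explicit formulas for the lower-order symbols of $R$ (which the paper wants for their own sake, especially the curvature corrections in Section~\ref{s: nonflat case}) are hidden inside your algebraic reduction.

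Two small points. First, the symbols are not rational in $(\tau,\xi')$ but algebraic (they contain $\sqrt{\tau^2c_S^{-2}-|\xi'|^2}$); fortunately you never actually use the claim that knowledge on the hyperbolic cone extends ``everywhere,'' since your induction only needs finitely many covectors in that cone at each step. Second, you correctly identify the real work as the non-degeneracy of the Riccati recursion; the paper does not prove this abstractly either, but verifies it by the explicit formula \eqref{e: acoustic partial_{x_3}a_0 formula} (the coefficient of $\partial_\nu^{|J|}\log c_S$ carries the factor $1-\tfrac{(\partial_t\phi)^2}{2c_S^2\xi_{3}^2}$, which varies with $|\xi'|/\tau$, while the $\partial_\nu^{|J|}\log\sqrt\rho$ coefficient does not), so your deferral to those formulas is honest.
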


\begin{rem}
	Note that we need the data only on a infinitesimally small neighbourhood on only one side of the interface. From the data measured on one side of the interface, we can determine information about the transmitted wave which lies on the other side of the interface. 
\end{rem}

\begin{rem}
	We require measurements for a very short period of time at the interface, but we consider time $T_0>0$ for the wave to travel from $\partial\Omega$ to $\Gamma$. That is, we start with generating an initial pulse at $t=0$ on $\partial\Omega$. Let $T>0$ be the time required for the wave to reach the interface $\Gamma$.
	We consider $T_0>T$ slightly bigger than $T_0$ and take measurements for a small time neighbourhood of $T$ at $\Gamma$. 
\end{rem}

\begin{rem}\label{rem_1}
	An estimate for the time $T_0$ can be given as $\mbox{diam}_{g}(\Omega) < T_0 < 2\mbox{diam}_{g}(\Omega)$, where the diameter is taken in the wave-speed metric $g:=c_S^{-2}dx^2$ in $\Omega$.
	Now, since the wave-speed $c_S$ is piecewise smooth in $\Omega$, we take the distance function defined by adding lengths of the connected geodesics on $\Omega_{-}$ and $\Omega_{+}$. To see in details of such non-smooth distance functions, see \cite{CHKUControl}.
\end{rem}
%

\subsection*{Elastic case}
For a bounded domain $\Omega \subset \R^3$ (representing an elastic object), we consider the isotropic elastic equation with operator $\Op$ given formally in the classical form as
\begin{equation}\label{eq_2}
\Op u:= \rho\p_{t}^2u - \nabla_x \cdot( \lambda \div \otimes \Id + 2\mu \widehat{\nabla}_x)u = 0,
\end{equation}
where $\rho$ is the density, $\lambda$ and $\mu$ are the Lam\'{e} parameters, and $\widehat{\nabla}$ is the \emph{symmetric gradient} used to define the strain tensor for an elastic system via $\widehat{\nabla} u = (\nabla u + (\nabla u)^T)/2$ for a vector valued distribution $u$.
Operator $\Op$ acts on a vector-valued distribution $u(x,t)= (u_1,u_2,u_3)$, the \emph{displacement} of the elastic object.
We assume that the Lam\'{e} parameters $\lambda(x)$ and $\mu(x)$ are bounded and satisfy the \emph{strong convexity conditions}, namely $\mu>0$ and $3\lambda+2\mu >0$ on $\overline{\Omega}$.

We consider the initial boundary value problem as
\begin{align}\label{Elastic_OP_1}
\Op U(t,x) =& 0,\qquad &&\mbox{in }(0,\infty)\times\Omega,\\
U(t,x)\restriction_{\partial\Omega} =& f(t,x), \qquad &&\mbox{for } (t,x) \in (0,\infty)\times\partial\Omega,\\
U\restriction_{t=0} = 0,\quad \p_t U\restriction_{t=0} =& 0, \qquad &&\mbox{in }\Omega.
\end{align}
Here $U(t,x)$ in the above system denotes the displacement in $\Omega$ at time $t\geq 0$ and $c_P = \sqrt{\frac{\lambda+2\mu}{\rho}}$, $c_S = \sqrt{\frac{\mu}{\rho}}$ are the compressional and the shear wave-speeds in $\Omega$ respectively. As described in \cite{SUV2019transmission,CHKUElastic} we impose the following transmission conditions
\begin{equation}\label{e: elastic transmission conditions}
[U] = 0, \qquad [ \mathcal N u ] = 0
\end{equation}
where $[v]$ stands for the jump of $v$ from the exterior to the interior across $\Gamma$ and $\mathcal N f$ are the normal components of the stress tensor (see (\ref{elastic_Neumann})).

The strong convexity conditions on $\lambda$, $\mu$ ensures that $c_P > c_S$ on $\overline{\Omega}$. Near an interface, we have the decomposition $u = u_I + u_R + u_T$ from (\ref{e: u = uI + uR+uT}). As in the acoustic setting, with $h = \rho_{\Gamma^-} u_I$ and assuming $h$ is microsupported away from the glancing set (see \cite{SUV2019transmission} for the relevant definition), one has $\rho_{\Gamma_-}u_R \equiv R h$, where $R \in \Psi^0(\Gamma_- \times \R_t)$ is the reflection operator (see \cite{CHKUControl,CHKUElastic}) and it is denoted by $M_R$ in \cite{CHKUElastic}, `$\equiv$' denotes equality modulo functions in the class $C^\infty(\Gamma_{\pm} \times \R_t)$. The operator $R$ will be derived microlocally from $Q$ and the transmission conditions. Note that in this setting, $\Psi^*(\Gamma_- \times \R_t)$ are pseudodifferential operators operating on vector bundles.
We now state our theorem:
\begin{theorem}\label{t: Elastic case}
	Let $\Op$ and $\widetilde{\Op}$ be two isotropic elastic wave operators with parameters $(\lambda,\mu,\rho)$ and $(\widetilde{\lambda},\widetilde{\mu},\widetilde{\rho})$ on $(0,T_0)\times\Omega$. Assuming the above notational conventions,
	suppose that $R \equiv \tilde R$ on $(0,T_0)\times\Gamma_-$ and $c_{\PS} =\tilde c_{\PS}$, $\rho = \tilde \rho$ near $\Gamma$ in $\overline \Omega_-$.
	Then $\p_{\nu}^j c^{(+)}_{\PS} = \p_{\nu}^j\tilde c^{(+)}_{\PS}$ and $\p_{\nu}^j\rho^{(+)} = \p_{\nu}^j\tilde \rho^{(+)}$ on $\Gamma_+$ for all $j = 0, 1, 2, \dots$.
\end{theorem}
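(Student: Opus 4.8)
The plan is to mimic the strategy that works for the acoustic case (Theorem \ref{Th_main_acoustic}), adapted to the vector-valued, $P$/$S$-coupled setting. First I would set up the microlocal machinery: fix a hyperbolic covector at a point of $\Gamma$, flatten $\Gamma$ locally to $\{x_3=0\}$, and write the elastic operator $Q$ in the $(t,x',x_3)$ coordinates. Following Taylor's factorization, I would decompose the solution near the interface into incoming, reflected, and transmitted parts and express the reflection operator $R$ as the $\Psi$DO on $\Gamma_-\times\R_t$ obtained by solving the pseudodifferential system coming from the transmission conditions $[U]=0$, $[\mathcal N U]=0$. The key technical input, which the paper promises to establish before this theorem, is an explicit formula for the \emph{full} symbol of $R$ — not just its principal part — as a function of the boundary jets of $c_P,c_S,\rho$ on both sides of $\Gamma$ together with the shape operator (here trivial since $\Gamma$ is flat). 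Since we are given $R\equiv\tilde R$ and all parameters on the $\Omega_-$ side agree, the symbol identity becomes a system of equations relating the unknown jets $\partial_\nu^j c_P^{(+)},\partial_\nu^j c_S^{(+)},\partial_\nu^j\rho^{(+)}$ to the known ones.

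The heart of the argument is then an induction on the order $j$ of the jet. At the base step $j=0$, one reads off the leading symbol of $R$: the principal symbol (computed already in e.g. \cite{SUV2019transmission,CHKUElastic,Knott1899,Zoeppritz1919}) depends on the boundary values $c_P^{(+)},c_S^{(+)},\rho^{(+)}$; by evaluating it at enough distinct hyperbolic directions $\xi'$ (i.e.\ varying the incidence angle, exactly as in the AVO heuristic and the toy example in the introduction), one inverts for the three boundary values. The monotonicity/injectivity of the reflection coefficient in the relevant parameters as the angle varies is what makes this inversion possible; this mirrors the role played by the factor $\cot\theta$ in the toy formula $R(\theta)=\frac{\rho_+c_+\cot\theta-\rho_-c_-\cot\theta}{\rho_+c_+\cot\theta+\rho_-c_-\cot\theta}$. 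For the inductive step, assume the jets up to order $j-1$ on $\Gamma_+$ are known to coincide with the tilde ones. Examine the symbol of $R$ at order $-j$ (or the appropriate lower-order term): by the structure of Taylor's construction, this symbol is a known expression in the already-determined lower jets \emph{plus} a term that is linear in the order-$j$ normal derivatives $\partial_\nu^j c_P^{(+)},\partial_\nu^j c_S^{(+)},\partial_\nu^j\rho^{(+)}$, with coefficients depending on $\xi'$. Again varying $\xi'$ over hyperbolic directions yields enough independent linear equations to solve for these three new jet components, giving $\partial_\nu^j c_{\PS}^{(+)}=\partial_\nu^j\tilde c_{\PS}^{(+)}$ and $\partial_\nu^j\rho^{(+)}=\partial_\nu^j\tilde\rho^{(+)}$, and closing the induction.

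The main obstacle I anticipate is \textbf{two-fold}. First, one must actually carry out the computation of the lower-order symbols of $R$ in the elastic case — this is genuinely new (the paper says it knows of no such computation beyond the principal symbol) and is made delicate by the fact that $R$ is a $3\times3$ matrix $\Psi$DO intertwining the $P$-mode and the two $S$-modes, so the factorization of $Q$ into first-order systems, the diagonalization of the tangential operator, and the bookkeeping of off-diagonal coupling terms all have to be controlled order by order. Second, and more conceptually, one must verify the nondegeneracy of the resulting linear systems at each induction step: that the coefficient matrix (as a function of the incidence direction $\xi'$, after evaluation at sufficiently many directions) has full rank in the three unknowns $(\partial_\nu^j c_P^{(+)},\partial_\nu^j c_S^{(+)},\partial_\nu^j\rho^{(+)})$. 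For $j=0$ this is essentially classical Knott–Zoeppritz analysis, but for higher $j$ one needs a structural reason — likely extracting the leading $\xi'$-asymptotics of the relevant symbol entries — to guarantee that the density, which enters only through the lower-order part of $Q$, is not ``invisible'' at that order. Handling the elastic transmission conditions (continuity of displacement and of the normal stress, the latter mixing tangential and normal derivatives through $\lambda$ and $\mu$) correctly in this bookkeeping is where the argument will be most technical; the reduction to the non-flat case is then routine by the perturbation argument announced for Section \ref{s: nonflat case}.
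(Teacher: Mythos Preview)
Your proposal is correct and follows essentially the same overall strategy as the paper: geometric optics near a flattened interface, an inductive recovery of the jets from the polyhomogeneous expansion of the symbol of $R$, and disentangling the parameters by varying the incidence direction $(\tau,\xi')$ within the hyperbolic set. The paper's execution differs from yours in two organizational points worth noting. First, rather than working directly with the order-$-j$ symbol of $R$ as a joint system in the three unknowns, the paper uses the transmission conditions to convert knowledge of $(u_R)_J$ on $\Gamma_-$ into knowledge of $(\p_\nu u_T)_{J+1}$ on $\Gamma_+$ (Lemma~\ref{lemma_4}), thereby reducing the problem to a one-sided computation for the transmitted wave that can borrow wholesale the recurrence machinery of Rachele~\cite{RachBoundary}. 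Second, rather than solving a $3\times3$ linear system at each step, the paper projects $(\p_\nu u_T)_J$ onto the explicit directions $M_1$ and $M_2$: the $M_1$-component isolates $(\p_\nu^{|J|+1}\log c_S,\p_\nu^{|J|+1}\log\sqrt\rho)$ with no $c_P$-contribution (Lemma~\ref{Lem_3.13_1}), and only afterwards does the $M_2$-component recover $\p_\nu^{|J|+1}\log c_P$ (Lemma~\ref{l: higher order p speed from reflection}). This decoupling is what makes your anticipated nondegeneracy check routine---each step reduces to the same two-angle argument as in the acoustic case---so the second obstacle you flag is handled not by a rank computation for a $3\times3$ coefficient matrix but by this choice of basis.
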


\begin{rem}
 More generally, we can show unique determination of the parameters below the interface using \emph{relative amplitude reflections}. In seismic experiments, one often only has access to normalized reflected amplitudes rather than the exact ones (see Remark \ref{rem: relative amplitudes computation}) for the normalization (c.f. \cite{ZhouRelAmp}). Hence, it becomes natural to ask whether our reconstruction methods would apply to these cases. For a reconstruction formula, the answer is essentially yes, modulo solving an intricate nonlinear equation involving one of the parameters, while unique determination can be done completely. The argument is briefly summarized in Remark \ref{rem: relative amplitudes computation}, which deals with the simpler acoustic case, but similar arguments hold for the elastic case.
\end{rem}

We also state an obvious corollary regarding the unique recovery of the transmission operator $T \in \Psi^0(\Gamma_- \times \RR)$ using the reflection operator. Here, $\rho_{\Gamma_+} u_T = Th$ with $u_T$ and $h$ introduced earlier. On a principal symbol level, this is usually proved using a conservation of energy argument. However, since we show that the full symbol of the transmission operator is determined by the jet of all three parameters on both sides of the interface, we also have
\begin{cor}\label{c: recover T from R}
	Suppose that $R  = \tilde R \text{ mod }\Psi^{-\infty}(\Gamma_- \times \RR)$ and $c_{\PS} =\tilde c_{\PS}$ and $\rho = \tilde \rho$ in $\overline \Omega_-$. Then $T  = \tilde T \text{ mod }\Psi^{-\infty}(\Gamma_{\pm} \times \RR)$.
\end{cor}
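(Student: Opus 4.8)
The statement is essentially a corollary of Theorem~\ref{t: Elastic case} (and, in the acoustic case, of Theorem~\ref{Th_main_acoustic}), so the plan is to bootstrap off it. First I would invoke that theorem: under the hypotheses $R = \tilde R \bmod \Psi^{-\infty}(\Gamma_-\times\RR)$ and $c_{\PS} = \tilde c_{\PS}$, $\rho = \tilde\rho$ in $\overline\Omega_-$, we obtain $\p_\nu^j c^{(+)}_{\PS} = \p_\nu^j \tilde c^{(+)}_{\PS}$ and $\p_\nu^j \rho^{(+)} = \p_\nu^j \tilde\rho^{(+)}$ on $\Gamma_+$ for all $j\ge 0$. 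Combined with the hypothesis on $\overline\Omega_-$ (which gives agreement of \emph{all} transverse derivatives at $\Gamma_-$, not just the values), this says precisely that the full Taylor jets along $\Gamma$ of $c_P, c_S, \rho$ coincide with those of $\tilde c_P, \tilde c_S, \tilde\rho$ from \emph{both} sides of the interface; equivalently, the coefficients of $\Op$ and $\widetilde\Op$ agree to infinite order along $\Gamma$ from either side.

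Next I would recall the microlocal construction of the transmission operator. Exactly as $R$, the operator $T\in\Psi^0(\Gamma_-\times\RR)$ is produced by the interface factorization of $\Op$ near $\Gamma$ into incoming/outgoing half-wave pieces (the operators $P^{I},P^{II},P^{III},P^{IV}$ of \cite{Taylor75}, in the elastic vector-valued version) together with the transmission conditions $[U]=0$, $[\mathcal N U]=0$; solving the resulting pseudodifferential system at $\Gamma$ yields the reflected branch $Rh$ and the transmitted branch $Th$ simultaneously, and expresses the full symbol of $T$ — like that of $R$ — recursively, order by order, in the full symbols at $\Gamma$ of those factorization operators and of $\mathcal N$. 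The key point is that each ingredient symbol at a covector over $y\in\Gamma$ is a universal expression (polynomial, with the square roots coming from the half-wave factorization) in the value at $y$ of the coefficients of $\Op$ and of finitely many of their $\p_\nu$-derivatives — i.e.\ in the parameter jets at $\Gamma$ — this being the same locality already exploited to compute the symbol of $R$ to all orders in the proof of Theorem~\ref{t: Elastic case}. Since those jets now agree on both sides, the identical recursion produces the same full symbol for $T$ as for $\tilde T$, hence $T = \tilde T \bmod \Psi^{-\infty}(\Gamma_\pm\times\RR)$. (That $T$ is genuinely a $\Psi$DO between $\Gamma_-$ and $\Gamma_+$, which are the same underlying manifold $\Gamma$, so that comparing full symbols is meaningful, is immediate from the construction.)

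The only content beyond quoting Theorem~\ref{t: Elastic case} is making the second paragraph precise, and that is also where the main obstacle lies: one must verify that the symbol recursion for $T$ \emph{closes} using only interface jet data, with no dependence on global behavior of the parameters. In practice I would handle this not by a standalone argument but by piggybacking on the symbol calculus already developed for $R$ (Section~\ref{s: acoustic zeroth order derivation} in the acoustic case and its elastic analogue): the transmitted-wave branch of that same computation is the symbol of $T$, every coefficient appearing in it is manifestly a function of the parameter jets at $\Gamma$, and so it is unchanged once those jets are matched. A minor additional check is that the square-root terms entering through the half-wave factorization are continuous functions of the jets on the hyperbolic region where we work, so that equality of jets yields equality of these branches as well; this is already implicit in the hyperbolicity assumptions made throughout. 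The acoustic version of the corollary is proved verbatim with Theorem~\ref{Th_main_acoustic} in place of Theorem~\ref{t: Elastic case}.
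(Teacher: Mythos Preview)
Your proposal is correct and follows essentially the same approach as the paper: the paper treats this as an immediate consequence of the fact that the full symbol of $T$ is determined by the jets of $c_\PS$ and $\rho$ on both sides of $\Gamma$, and those jets coincide by Theorem~\ref{t: Elastic case}. Your write-up simply fleshes out why the symbol recursion for $T$ closes on interface jet data, which the paper leaves implicit in the preceding construction.
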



\begin{rem}
 We require measurements for a very short period of time at the interface, but we consider a time $T_0>0$ for the waves to travel from $\partial\Omega$ to $\Gamma$. Since, we have two waves (compressional and shear) travelling with different wave-speeds ($c_{\PS}$), the estimate of the time $T_0$ is not as straight-forward as in the acoustic case (Remark \ref{rem_1}).
	We avoid this dilemma with the help of the strong convexity condition of the Lam\'e parameters, which ensures that $c_P>c_S$ in $\Omega$.
	We can define distance functions corresponding to the non-smooth metrics $g_{\PS}$ by joining geodesics on the both sides of the interface (see \cite{CHKUElastic}) and estimate the time $T_0$ for elastic waves as $\mbox{diam}_{g_S}(\Omega) < T_0 < 2\mbox{diam}_{g_P}(\Omega)$, where the two wave-speed metrics are given as $g_{\PS}:=c_{\PS}^{-2}dx^2$ in $\Omega$.
\end{rem}

\begin{rem}
	Since we can take $\Omega$ to be as arbitrarily small and $T_0$ is bounded above and below by $\text{diam}_{g_{\PS}}(\Omega)$, therefore, one can have $T_0$ to be small enough by choosing a thin enough $\Omega$.
\end{rem}

For clarity of the exposition, we first assume a flat interface and prove the theorems in this case in Sections \ref{Sec_Acoustic} and \ref{Sec_statement}. In Section \ref{s: nonflat case}, we extend the arguments to the general case.

\section{Acoustic waves and proof of Theorem \ref{Th_main_acoustic}}\label{Sec_Acoustic}

The parameters for the acoustic waves are $\mu(x)>0$ and $\rho(x)>0$, where $\rho$ is the density of the domain $\Omega$ and $c_S := \sqrt{\frac{\mu}{\rho}}$ is the wave speed in $\Omega$.
We write the coordinates in $(0,\infty)\times\Omega$ as $(t,x)=(t,x_1,x_2,x_3) = (t,x',x_3)$ and consider $(\tau,\xi)=(\tau,\xi_1,\xi_2,\xi_3):=(\tau,\xi',\xi_3)$ to be the dual coordinates of $(t,x)$ in the cotangent space $T^*\Omega$.

\subsubsection*{Summary of the proof of Theorem \ref{Th_main_acoustic}} Let us here provide a brief summary on how we prove Theorem \ref{Th_main_acoustic}.
As a first step, we give a complete derivation of the reflection operator $R$ by using ``geometric optics solutions'' of \eqref{Acoustic_OP} near $\Gamma$. One can use Fourier integral operators to construct a microlocal parameterix for the fundamental solution of \eqref{Acoustic_OP} when $f$ is a delta distribution. After imposing the transmission conditions, we derive the reflection operator $R$ that is a constituent of this parameterix. $R$ will end up being a $0$'th order classical PsiDO  and we derive each symbol in the polyhomogeneous expansion of the symbol of $R$. We also show how the curvature of $\Gamma$ affects the lower order symbols. 

Afterwards, we proceed with a series of lemmas and propositions showing how to recover the material parameters $\mu$ and $\rho$, and their derivatives, at the interface using each term in the polyhomogeneous expansion of the full symbol of $R$. We start with the principal symbol of $R$ and then successively use lower order symbols to recover more derivatives of the coefficients. The shape operator of $\Gamma$ gets recovered as well. The final proof just combines the lemmas and propositions and will follow easily. 
Since the elastic case follows an analogous procedure in a more complicated case, we leave most of the proofs to Appendix \ref{s: proofs of acoustic lemmas}, and instead focus on the proofs for the elastic case.

We consider a geometric optic solution for the acoustic wave equation \eqref{Acoustic_OP} locally near $\Gamma$ as \[
U = U_I+ U_R + U_T
\]
with
\begin{equation*}
U_{\bullet}(t,x)
= \int e^{i\phi_{\bullet}(t, x, \tau, \xi')} a_{\bullet}(t, x, \tau, \xi')\hat h(\tau, \xi') d \tau d\xi',
\end{equation*}
where $\bullet = I/R/T$ denotes the incoming, reflected or the transmitted wave fields and $\hat{h}$ is the Fourier transform of the $h:=\rho_{\Gamma_{-}}U_I$.
The wave fields $U_{I/R}$ are supported on $\overline\Omega_{-}$ and $U_T$ is supported in $\overline \Omega_{+}$.
The phase function $\phi_{\bullet}(t,x,\tau,\xi')$ satisfies the usual Eikonal equation
\begin{equation}\label{Eikonal_eq}
\abs{\partial_t \phi_{\bullet}}^2 = c^2_{S}\abs{\nabla_{x} \phi_{\bullet}}^2,
\end{equation}
with the boundary condition $\phi_{\bullet}\restriction_{x_3 = 0} = -t\tau + x' \cdot \xi'$. We observe that $\phi_{\bullet}$ is of homogeneity $1$ in the $(\tau,\xi')$ variables.
We write a formal asymptotic series for the amplitude function $a_{\bullet}(t,x,\tau,\xi')$ as
\begin{equation*}
a_{\bullet}(t,x,\tau,\xi^{\prime}) = \sum_{J=0}^{-\infty} (a_{\bullet})_J(t,x,\tau,\xi^{\prime}), \qquad \bullet = I,R,T,
\end{equation*}
where $(a_{\bullet})_J$ is homogeneous of order $|J|$ in $\abs{\left(\tau,\xi^{\prime}\right)}$.
From the equation $PU=0$, separating orders of $\abs{(\tau,\xi^{\prime})}$ we obtain recursive transport equations for the terms $(a_{\bullet})_J$.

Without loss of generality, we assume a flat metric near $\Gamma$, i.e. $g = c_S^{-2}dx^2$ and assume $\Gamma \subset \{ x_3 = 0 \}$ so that $x_3$ is a defining function for $\Gamma$. We show in Section \ref{s: nonflat case} how the general case follows easily from this case. 
The reflection and the transmission operators $R$, $T$ on the interface are derived from the transmission conditions so that $R(U_I) = U_R$ and $T(U_I) = U_T$ when restricted to $\Gamma$. One can calculate the full symbol of $R$ and $T$ microlocally (see \cite{SU-TATBrain,Hansen-CPDEInverse} as well) from the transmission conditions on $\Gamma$ induced by the acoustic wave equation.
In Theorem \ref{Th_main_acoustic} we prove that one can determine $\partial_{\nu}^k\mu$, $\partial_{\nu}^k\rho$ on $\Gamma$, for $k=0,1,2,\dots$, from the knowledge of the reflection operator $R$ at the interface $\Gamma$ and the parameters $\rho$, $\mu$ on $\Omega_{-}$.

\subsection{Derivation of reflection operator}\label{s: acoustic zeroth order derivation}
Since $h(t,x')$ on $\Gamma$ can be made arbitrary, one can work with the acoustic wave parametrix
\begin{equation*}
u_{\bullet}(t,x,\tau,\xi') = e^{i\phi_{\bullet}(t,x,\tau,\xi')}a_{\bullet}(t,x,\tau,\xi').
\end{equation*}
Let $u_I$ be an incoming wave approaching the interface $\Gamma$ and $u_R$ be the reflected wave with the condition $\rho_{\Gamma_{-}}u_R \equiv R h$, where $R \in \Psi^0(\Gamma_{-} \times \RR)$ is a well-known pseudodifferential reflection operator.  Thus, $a_R\restriction_{\Gamma}$ is the symbol of $R$ in the statement of Theorem \ref{Th_main_acoustic} and the discussion preceding it, and the symbol of $R$ has an asymptotic expansion as $\sum_{J=0}^{-\infty} (a_{R})_J\restriction_{\Gamma_-}$.
The interface condition for acoustic waves reads
\begin{align*}\label{e: interface conditions}
a_I + a_R &= a_T, \\
\mu^{(-)}(\p_{x_3} \phi_I a_I + \p_{x_3}a_I) + \mu^{(-)}(\p_{x_3} \phi_R a_R + \p_{x_3}a_R) &= \mu^{(+)}(\p_{x_3} \phi_T a_T + \p_{x_3}a_T ), \\
& \text{ on }\Gamma.
\end{align*}

Now, we must have $u_I\restriction_\Gamma = h$ so that this imposes the boundary conditions of $(a_{I})_J$, $J=0,-1,\dots$. Indeed we get
\begin{equation}\label{e: bdy conditions of a_I}
(a_I)_0 = 1 \text{ and }
(a_I)_J = 0 \text{ on } \Gamma.
\end{equation}
Now, observe that, from the interface conditions of $\phi_{\bullet}$ in (\ref{Eikonal_eq}) we obtain $\p_{x_k}e^{i\phi_{\bullet}} = i\xi_k e^{i\phi_{\bullet}}$ for $k=1,2$ and $\p_{t}e^{i\phi_{\bullet}} = -i\tau e^{i\phi_{\bullet}}$ on $\Gamma$.
Furthermore, we define the quantity
\begin{equation}
\xi_{3, \bullet} = \sqrt{\abs{\p_{x'}\phi_{\bullet}}^2 - c^{-2}_{S} \abs{\p_t \phi_{\bullet}}^2} = \sqrt{\abs{\xi'}^2 - c^{-2}_{S}\abs{\tau}^2}
,\qquad \mbox{where}\quad \bullet = I, R, T.
\end{equation}
Also, note that $\xi_{3,R} = -\xi_{3,I}$ on $\Gamma$. The interface conditions for the $0$'th order term $(a_{\bullet})_0$ are
\begin{align*}
&\begin{cases}
-(a_R)_0 + (a_T)_0 = (a_I)_0 \\
-\mu^{(-)}\xi_{3, R} (a_R)_0 + \mu^{(+)}\xi_{3, T} (a_T)_0
 = \mu^{(-)}\xi_{3, I}((a_I)_0 = 1)
\end{cases}
\qquad \mbox{on }\Gamma.
\\
\text{Hence, } \quad
 &\bmat -1 & 1 \\ -\mu^{(-)}\xi_{3, R} & \mu^{(+)}\xi_{3, T}\emat \col{(a_R)_0 \\ (a_T)_0 }
 = \col{ 1 \\ \mu^{(+)}\xi_{3, I}} \qquad \text{ on } \Gamma.
\end{align*}
Since $\xi_{3, R} = - \xi_{3, I}$, we compute
\begin{equation}\label{zeroth_order_wave}
\col{(a_R)_0\\ (a_T)_0}
= \frac{-1}{\mu^{(-)}\xi_{3, I} + \mu^{(+)}\xi_{3, T}}
\bmat \mu^{(+)}\xi_{3, T} & -1 \\ -\mu^{(-)}\xi_{3,I} & -1\emat
\col{1 \\ \mu^{(+)}\xi_{3, I} } = B_0\col{1 \\ \mu^{(+)}\xi_{3, I} },
\end{equation}
where the matrix $B_0$ above depends only on the parameters $\lambda,\mu,\rho$ at the boundary, but \emph{not} on their derivatives.
For the order of homogeneity $-J = 1,2,\dots$ in $\abs{\xi^{\prime}}$, using the boundary conditions for $(a_{\bullet})_{J}$, we get
\begin{multline}\label{e: transmission conditions}
\bmat -1 & 1 \\ -\mu^{(-)}\xi_{3, R} & \mu^{(+)}\xi_{3, T}\emat\col{(a_R)_J \\ (a_T)_J }
\\ = \col{ 0 \\ \mu^{(-)}\p_{x_3}(a_I)_{J+1} + \mu^{(-)}\p_{x_3}(a_R)_{J+1}-\mu^{(+)}\p_{x_3}(a_T)_{J+1}} \quad \text{on } \Gamma.
\end{multline}

Thus, $(a_R)_0$ restricted to $\RR_t \times \Gamma$ is the principal symbol of $R$ and $(a_R)_J$ for $J = -1, -2, \dots$ restricted to $\RR_t \times \Gamma$ are the lower order symbols in the polyhomogeneous expansion of the symbol of $R$.

\subsection{Some lemmas and proof of Theorem \ref{Th_main_acoustic}}\label{s: lemmas and proof of acoustic thm}

We can uniquely determine both material parameters restricted to the interface from $(a_R)_0$. The proofs of the lemmas in this section are in Appendix \ref{s: proofs of acoustic lemmas}.

\begin{lemma}\label{l: recover 2 params from 0'th order reflect}
Suppose $(a_R)_0(x,\tau_i,\xi'_i) = (\tilde a_R)_0(x,\tau_i,\xi_i')$ for $i=1,2$ such that $\abs{\xi_1'}/\tau_1 \neq \pm \abs{\xi_2'}/\tau_2$ and $(x,\tau_i,\xi'_i)$ are not in the glancing set. Suppose also that $\mu^{(-)} = \tilde \mu^{(-)}, \rho^{(-)} = \tilde \rho^{(-)}$. Then $\mu^{(+)} = \tilde \mu^{(+)}, \rho^{(+)} = \tilde \rho^{(+)}$. That is, the reflection coefficient at two different covectors in the nonglancing region uniquely determine both material parameters infinitesimally below the interface when those parameters are known above the interface.
\end{lemma}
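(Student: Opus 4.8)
The plan is to extract from the explicit formula \eqref{zeroth_order_wave} the two unknowns $\mu^{(+)}$ and $\rho^{(+)}$ (equivalently $\mu^{(+)}$ and $c_S^{(+)}$) by evaluating the principal symbol $(a_R)_0$ at two covectors with distinct ``slownesses'' $|\xi'|/\tau$. First I would observe that, by \eqref{zeroth_order_wave}, the $(a_R)_0$ entry of the vector equals
\[
(a_R)_0 = \frac{\mu^{(-)}\xi_{3,I} - \mu^{(+)}\xi_{3,T}}{\mu^{(-)}\xi_{3,I} + \mu^{(+)}\xi_{3,T}},
\]
where $\xi_{3,I} = \sqrt{|\xi'|^2 - (c_S^{(-)})^{-2}\tau^2}$ depends only on the (known) upper-side data, and $\xi_{3,T} = \sqrt{|\xi'|^2 - (c_S^{(+)})^{-2}\tau^2}$ depends on the unknown lower speed. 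Since $\mu^{(-)}, \rho^{(-)}, c_S^{(-)}$ are known, $\xi_{3,I}$ is a known number at each covector, so knowing $(a_R)_0$ at a covector is equivalent to knowing the single quantity $Z^{(+)} := \mu^{(+)}\xi_{3,T}$ there (solve the Möbius relation above for $Z^{(+)}$; it is invertible away from the pole). This is the acoustic analog of an impedance.

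Next I would set up the two-covector system. At covector $i$ ($i=1,2$) we recover $Z_i := \mu^{(+)}\sqrt{|\xi_i'|^2 - (c_S^{(+)})^{-2}\tau_i^2}$. Writing $\kappa := (c_S^{(+)})^{-2}$ and squaring,
\[
Z_i^2 = (\mu^{(+)})^2\bigl(|\xi_i'|^2 - \kappa\,\tau_i^2\bigr), \qquad i = 1,2,
\]
which is two equations in the two unknowns $(\mu^{(+)})^2$ and $(\mu^{(+)})^2\kappa$. Because $|\xi_1'|^2/\tau_1^2 \neq |\xi_2'|^2/\tau_2^2$ (this is exactly the hypothesis $|\xi_1'|/\tau_1 \neq \pm|\xi_2'|/\tau_2$), the coefficient matrix $\begin{bmatrix}|\xi_1'|^2 & -\tau_1^2\\ |\xi_2'|^2 & -\tau_2^2\end{bmatrix}$ is invertible, so $(\mu^{(+)})^2$ and hence $(\mu^{(+)})^2\kappa$, i.e. $\kappa = (c_S^{(+)})^{-2}$, are uniquely determined; taking positive square roots (all parameters are positive) recovers $\mu^{(+)}$ and $c_S^{(+)}$, and then $\rho^{(+)} = \mu^{(+)}/(c_S^{(+)})^2$. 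The same computation applied to $\widetilde P$ gives $\widetilde Z_i = Z_i$ from the hypothesis $(a_R)_0 = (\tilde a_R)_0$, hence $\mu^{(+)} = \tilde\mu^{(+)}$ and $\rho^{(+)} = \tilde\rho^{(+)}$.

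The only genuinely delicate points are bookkeeping ones: checking that the Möbius inversion step is valid, i.e. that $\mu^{(-)}\xi_{3,I} + \mu^{(+)}\xi_{3,T} \neq 0$ and that $(a_R)_0 \neq 1$, which holds away from the glancing set where $\xi_{3,I}, \xi_{3,T}$ are real and positive (nonglancing hyperbolic covectors are assumed); and confirming that the two recovered covector-values of $Z$ share the \emph{same} $\mu^{(+)}$, which is automatic since $\mu^{(+)}$ is a single number at the point $x \in \Gamma$. I do not expect a real obstacle here — the work is entirely in reading off the right invariant from \eqref{zeroth_order_wave} and solving a $2\times 2$ linear system — so the proof is short. (The subtlety that does require care, namely that a single covector is \emph{not} enough because $(a_R)_0$ only sees the product $\mu^{(+)}\xi_{3,T}$, is precisely what forces the two-covector hypothesis, and is worth a sentence in the write-up.)
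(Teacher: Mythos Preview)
Your argument is correct and reaches the same conclusion as the paper, but the route is organized a bit differently. The paper's own proof never isolates the ``impedance'' $Z^{(+)}=\mu^{(+)}\xi_{3,T}$; instead it writes $(a_R)_0=\dfrac{af-c}{af+c}$ with $a=\mu^{(-)}$, $c=\mu^{(+)}$, $f=\xi_{3,I}/\xi_{3,T}$, sets this equal to the tilded expression, and obtains the ratio identity $c/\tilde c=\tilde f/f$. Evaluating this at two covectors gives $\tilde f/f=\tilde f_1/f_1$, which after cross-multiplying becomes $\bigl((\tilde c_S^{(+)})^{-2}-(c_S^{(+)})^{-2}\bigr)(b^2-b_1^2)=0$ with $b=|\xi'|/\tau$, forcing $c_S^{(+)}=\tilde c_S^{(+)}$ and then $\mu^{(+)}=\tilde\mu^{(+)}$. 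So the paper argues purely by comparison, whereas you first \emph{reconstruct} $Z^{(+)}$ via the M\"obius inversion and then solve a $2\times 2$ linear system for $(\mu^{(+)})^2$ and $(\mu^{(+)})^2(c_S^{(+)})^{-2}$. Your version has the advantage of giving an explicit recovery formula (not just uniqueness), at the small cost of the extra inversion step; the paper's ratio trick is marginally slicker for uniqueness alone since it sidesteps any discussion of when the M\"obius map is invertible. One minor correction: for the inversion $Z=Y\,\dfrac{1-(a_R)_0}{1+(a_R)_0}$ you need $(a_R)_0\neq -1$ rather than $\neq 1$, but this is automatic from the nonglancing hypothesis since $Y=\mu^{(-)}\xi_{3,I}\neq 0$.
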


The next step is to recover all the higher order normal derivatives of the parameters at the non-glancing region of the interface from the lower order symbols in the polyhomogeneous expansion of the full symbol of $R$.
Since we are considering the non-glancing region only, therefore, we may very well assume that $\xi_{3,\bullet}$ is bounded away from $0$.

{\bf Notation:} We denote by $R_j$ terms that depend on
\begin{enumerate}
	\item[$\bullet$]
	normal derivatives of $c_S,\rho$ of order at most $j$, and
	\item[$\bullet$]
	quantities determined completely by the transmission conditions (\ref{zeroth_order_wave}),(\ref{e: transmission conditions}) in $\Gamma$ for $J=0,-1, \dots, 1-j$, and
\item[$\bullet$] any quantity in the known region $\overline\Omega_-$.
\end{enumerate}

By a direct calculation, $P u_{\bullet} =0$ reduces to the equation
\begin{equation*}
p(x, \p_{t,x} \phi_\bullet) a_{\bullet}
+ 2i(\rho \p_t \phi_\bullet \p_t - \mu \p_x \phi_\bullet \cdot \p_x)a_\bullet
+ i (P\phi_\bullet)a_\bullet + P(x,D_t,D_x) a_{\bullet} = 0,
\end{equation*}
where $p(t,x,\tau\,\xi)$ is the principal symbol of the operator $P = \rho \p_t^2 - \nabla \cdot \mu \nabla$, given as
\begin{equation*}
p(t,x,\tau,\xi) = -\left(\rho(x)\tau^2 - \mu(x)\abs{\xi}^2\right).
\end{equation*}
Separating orders of $\abs{\xi^{\prime}}$ we obtain the transport equations
\begin{equation}\label{e: transport equations}
\begin{gathered}
(\p_t \phi_\bullet \p_t - c_S^2 \p_x \phi_\bullet \cdot \p_x)(a_\bullet)_0
+  ((1/2\rho)P\phi_\bullet)(a_\bullet)_0 = 0, \\
(\p_t \phi_\bullet \p_t - c_S^2 \p_x \phi_\bullet \cdot \p_x)(a_\bullet)_J
+  ((1/2\rho)P\phi_\bullet)(a_\bullet)_J \\ = -(1/i2\rho)P(x,D_t,D_x) (a_{\bullet})_{J+1},
\quad \mbox{for }J<0.
\end{gathered}
\end{equation}
The Hamiltonian to describe downgoing and upgoing waves is
\begin{equation*}
q_\pm(t,x,\tau,\xi) = \xi_3 \mp \sqrt{c_S^{-2}\tau^2 - \abs{\xi'}^2},
\end{equation*}
with the Hamilton's equations
\begin{align*}
\frac{dt}{ds} = \frac{-\tau}{c_S^2 \xi_{3,\bullet}}
\qquad\mbox{and}\qquad
\frac{dx}{ds} = \frac{\xi}{\xi_{3,\bullet}}.
\end{align*}
Here $s$ is the parameter along the Hamiltonian fields. Along the Hamilton vector fields we obtain
\begin{multline}\label{key_5}
(\p_t \phi_\bullet \p_t - c_S^2 \p_x \phi_\bullet \cdot \p_x)a_\bullet
= -c_S^2\xi_{3,\bullet}  \left(\frac{d}{ds} a_\bullet\right)
\\
= -c_S^2\xi_{3,\bullet} \left(\p_{x_3}a_\bullet
+ (\tau/\xi_{3,\bullet}) \p_t a_\bullet - (\xi'/\xi_{3,\bullet}) \cdot \p_{x'}a_\bullet\right).
\end{multline}
Observe that, when we restrict to $\Gamma$, the second two terms
viz. $(\tau c^2_S)\partial_t a_{\bullet}$ and $c^2_S \left(\xi^{\prime}\cdot\partial_{x^{\prime}}a_{\bullet}\right)$
are completely determined by $0$ derivatives of $c_S$, $\rho$ at $\Gamma$ and the transmission conditions (\ref{e: transmission conditions}).
Thus, using our notation $R_j$, for $J=0$, (\ref{key_5}) can be written as
\begin{equation}\label{e: Hamilton derivative to normal deriv}
(\p_t \phi_\bullet \p_t - c_S^2 \p_x \phi_\bullet \cdot \p_x)(a_\bullet)_0
= -(c_S^2\xi_{3,\bullet})  \frac{d}{ds}(a_\bullet)_0
= -c_S^2\xi_{3,\bullet} \p_{x_3}(a_\bullet)_0
+ R_0.
\end{equation}

Along with the transport equations, this identity shows that the term $\partial_{x_3}(a_\bullet)_0$ can be expressed in terms of the normal derivatives of the parameters at $\Gamma$.
In order to illustrate this fact and to get an explicit relation between the normal derivatives of the amplitude and the normal derivatives of the coefficients, we state the following technical lemma.
%
\begin{lemma}\label{l: acoustic partial_x_3 a_0 formula}
$\p_{x_3}(a_\bullet)_0$ are $R_1$, that is, they depend on at most $1$ derivative of $\rho$, $c_S$ on $\Gamma$.
In fact we have the following explicit relation
\begin{equation}\label{e: acoustic partial_{x_3}a_0 formula}
\p_{x_3}(a_\bullet)_0 = -\left[(\p_{x_3}\log \sqrt \rho)
- (\p_{x_3}\log c_S)\left(1 - \frac{(\p_t \phi_\bullet)^2}{2c_S^2 \xi^2_{3,\bullet}}\right)\right](a_\bullet)_0 + R_0.
\end{equation}

\end{lemma}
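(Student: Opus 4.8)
The plan is to turn the $J=0$ transport equation for $(a_\bullet)_0$ into a formula for the single normal derivative $\p_{x_3}(a_\bullet)_0$ on $\Gamma$, and then to show that the only ``new'' ingredients appearing on the right‑hand side are one normal derivative of $\mu$ and the second normal derivative $\p_{x_3}^2\phi_\bullet$ of the phase, the latter of which the Eikonal equation expresses through a single normal derivative of $c_S$.

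First I would combine the leading transport equation in \eqref{e: transport equations} with the Hamilton‑field identity \eqref{e: Hamilton derivative to normal deriv}. Equating the two resulting expressions for $(\p_t \phi_\bullet \p_t - c_S^2 \p_x \phi_\bullet \cdot \p_x)(a_\bullet)_0$ and solving for the normal derivative gives, on $\Gamma$,
\[
\p_{x_3}(a_\bullet)_0 \;=\; \frac{1}{2\rho c_S^2\,\xi_{3,\bullet}}\,(P\phi_\bullet)\,(a_\bullet)_0 + R_0 \;=\; \frac{1}{2\mu\,\xi_{3,\bullet}}\,(P\phi_\bullet)\,(a_\bullet)_0 + R_0,
\]
using $\rho c_S^2=\mu$; here the $R_0$ absorbs the error term of \eqref{e: Hamilton derivative to normal deriv} divided by the $R_0$ quantity $c_S^2\xi_{3,\bullet}$, which is bounded away from $0$ off the glancing set. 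Moreover $(a_\bullet)_0\restriction_\Gamma$ is itself $R_0$: it equals $1$ for $\bullet=I$ by \eqref{e: bdy conditions of a_I}, and for $\bullet=R,T$ it is an entry of $B_0\bigl(1,\ \mu^{(+)}\xi_{3,I}\bigr)^{\top}$ from \eqref{zeroth_order_wave}, a quantity built only from $\rho,c_S$ at $\Gamma$ (which are recovered via Lemma \ref{l: recover 2 params from 0'th order reflect}). Thus everything reduces to understanding $P\phi_\bullet\restriction_\Gamma$.

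Next I would evaluate $P\phi_\bullet\restriction_\Gamma$ directly. Writing $P\phi_\bullet=\rho\p_t^2\phi_\bullet-(\nabla\mu)\cdot\nabla\phi_\bullet-\mu\,\Delta\phi_\bullet$ and using that $\phi_\bullet\restriction_\Gamma=-t\tau+x'\cdot\xi'$ is affine in the tangential variables $(t,x')$, all pure tangential second derivatives of $\phi_\bullet$ vanish on $\Gamma$; in particular $\p_t^2\phi_\bullet\restriction_\Gamma=0$ and $\Delta\phi_\bullet\restriction_\Gamma=\p_{x_3}^2\phi_\bullet\restriction_\Gamma$. Since also the tangential part $\p_{x_1}\mu\,\xi_1+\p_{x_2}\mu\,\xi_2$ of $(\nabla\mu)\cdot\nabla\phi_\bullet$ consists of tangential derivatives of $\mu\restriction_\Gamma$ and is therefore $R_0$, we get
\[
P\phi_\bullet\restriction_\Gamma \;=\; -\,\p_{x_3}\mu\cdot\p_{x_3}\phi_\bullet\restriction_\Gamma \;-\; \mu\,\p_{x_3}^2\phi_\bullet\restriction_\Gamma \;+\; R_0 ,
\]
the first term carrying exactly one normal derivative of $\mu=\rho c_S^2$ and hence being $R_1$. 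It then remains to control $\p_{x_3}^2\phi_\bullet\restriction_\Gamma$ via the Eikonal equation \eqref{Eikonal_eq}: differentiating $(\p_t\phi_\bullet)^2=c_S^2|\nabla\phi_\bullet|^2$ once in $t$ and restricting to $\Gamma$ (using that $c_S$ is $t$-independent and that the tangential first derivatives of $\phi_\bullet$ are constant along $\Gamma$) forces $\p_t\p_{x_3}\phi_\bullet\restriction_\Gamma=0$; differentiating once in $x_3$ and restricting, using this together with $|\nabla\phi_\bullet|^2\restriction_\Gamma=c_S^{-2}\tau^2$, then yields $\p_{x_3}^2\phi_\bullet\restriction_\Gamma=-(\p_{x_3}c_S^2)\tau^2/(2c_S^4\,\p_{x_3}\phi_\bullet)+R_0$, where the $R_0$ collects the mixed terms $\p_{x_i}\p_{x_3}\phi_\bullet\restriction_\Gamma$, $i=1,2$, which involve only tangential derivatives of $c_S\restriction_\Gamma$. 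Hence $\p_{x_3}^2\phi_\bullet\restriction_\Gamma$, and therefore $P\phi_\bullet\restriction_\Gamma$ and $\p_{x_3}(a_\bullet)_0$, are $R_1$. Substituting back, using $\p_{x_3}\phi_\bullet\restriction_\Gamma=\pm\xi_{3,\bullet}$, $\mu=\rho c_S^2$ and $\p_{x_3}\log\mu=\p_{x_3}\log\rho+2\p_{x_3}\log c_S$, and collecting terms gives precisely the explicit identity \eqref{e: acoustic partial_{x_3}a_0 formula}.

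The main obstacle is the third step, the Eikonal computation of $\p_{x_3}^2\phi_\bullet\restriction_\Gamma$: this is where the ``only one normal derivative of $c_S$'' phenomenon is genuinely established, and it requires carefully separating tangential from normal derivatives of $\phi_\bullet$, checking that no second \emph{normal} derivative of the phase secretly involves a second derivative of $c_S$, and keeping track of which branch/sign of $\xi_{3,\bullet}$ is attached to each of $\bullet=I,R,T$ so that the constants in \eqref{e: acoustic partial_{x_3}a_0 formula} come out correctly. The remaining parts — the transport/Hamilton bookkeeping of step one and the algebraic simplification of step four — are routine, but the sign conventions must be handled consistently to land exactly on the stated formula.
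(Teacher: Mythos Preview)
Your proposal is correct and follows essentially the same route as the paper's proof: combine the $J=0$ transport equation with the Hamilton-field identity \eqref{e: Hamilton derivative to normal deriv} to reduce to computing $P\phi_\bullet\restriction_\Gamma$, then use that $\phi_\bullet$ is affine in $(t,x')$ on $\Gamma$ to kill the purely tangential second derivatives, and finally differentiate the Eikonal equation once in $x_3$ to express $\p_{x_3}^2\phi_\bullet\restriction_\Gamma$ in terms of a single normal derivative of $c_S$ plus $R_0$ terms. Your bookkeeping of which mixed derivatives are $R_0$ (in particular noting $\p_t\p_{x_3}\phi_\bullet\restriction_\Gamma=0$ and that $\p_{x_i}\p_{x_3}\phi_\bullet\restriction_\Gamma$ involves only tangential derivatives of $c_S$) is if anything slightly more explicit than the paper's, but the argument is the same.
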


Next, higher order normal derivatives of the material parameters can be uniquely determined from the knowledge of $(a_{\bullet})_J$ at $\Gamma$.
We first consider the case for the first order normal derivatives.
\begin{lemma}\label{l: first derivative}
One may recover the first normal-derivatives of both parameters, that is $\p_{x_3}\rho^{(+)}$ and $\p_{x_3}c_S^{(+)}$ at $\Gamma$ from $(a_R)_{-1}$.
\end{lemma}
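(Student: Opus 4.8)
The plan is to read off $\partial_{x_3}\rho^{(+)}$ and $\partial_{x_3}c_S^{(+)}$ from the transmission conditions for the first-order amplitude term, namely equation \eqref{e: transmission conditions} with $J=-1$. That system reads
\begin{equation*}
\bmat -1 & 1 \\ -\mu^{(-)}\xi_{3, R} & \mu^{(+)}\xi_{3, T}\emat\col{(a_R)_{-1} \\ (a_T)_{-1} }
= \col{ 0 \\ \mu^{(-)}\p_{x_3}(a_I)_{0} + \mu^{(-)}\p_{x_3}(a_R)_{0}-\mu^{(+)}\p_{x_3}(a_T)_{0}}\quad\text{on }\Gamma .
\end{equation*}
Since $R\equiv\tilde R$ and the zeroth-order symbol $(a_R)_0$ is already pinned down (Lemma \ref{l: recover 2 params from 0'th order reflect}), knowing $(a_R)_{-1}$ is equivalent to knowing the right-hand side of the second equation, i.e.\ the scalar quantity $\mathcal{J} := \mu^{(-)}\p_{x_3}(a_I)_{0} + \mu^{(-)}\p_{x_3}(a_R)_{0}-\mu^{(+)}\p_{x_3}(a_T)_{0}$ evaluated on $\Gamma$, for every admissible covector. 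Indeed, inverting the $2\times 2$ matrix (invertible away from glancing, as in \eqref{zeroth_order_wave}) expresses $(a_R)_{-1}$ as an explicit nonvanishing multiple of $\mathcal{J}$ plus terms built solely from $(a_R)_0,(a_T)_0$ and the already-known quantities; hence $\mathcal{J}$ is determined and so is its tilded counterpart with $\mathcal{J}=\tilde{\mathcal{J}}$.

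Next I would substitute the explicit formula \eqref{e: acoustic partial_{x_3}a_0 formula} from Lemma \ref{l: acoustic partial_x_3 a_0 formula} for each of $\p_{x_3}(a_I)_0,\p_{x_3}(a_R)_0,\p_{x_3}(a_T)_0$. For $\bullet=I,R$ the coefficients $\rho,c_S$ and their normal derivatives appearing in that formula are the $\Omega_-$-values, which are known; so the $\mu^{(-)}\p_{x_3}(a_I)_0+\mu^{(-)}\p_{x_3}(a_R)_0$ part of $\mathcal{J}$ is entirely known. For $\bullet=T$, \eqref{e: acoustic partial_{x_3}a_0 formula} gives $\p_{x_3}(a_T)_0$ as a linear combination of $\p_{x_3}\log\sqrt{\rho^{(+)}}$ and $\p_{x_3}\log c_S^{(+)}$ (with the $(a_T)_0$ and $R_0$ factors already known from the zeroth-order analysis, since $(a_T)_0$ is given by \eqref{zeroth_order_wave} in terms of the interface values we have recovered). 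Therefore $\mathcal{J}$ is, up to known quantities, a known linear combination
\begin{equation*}
\mathcal{J} = \alpha(x,\tau,\xi')\,\p_{x_3}\log\sqrt{\rho^{(+)}} + \beta(x,\tau,\xi')\,\p_{x_3}\log c_S^{(+)} + (\text{known}),
\end{equation*}
where $\alpha,\beta$ are explicit functions of the (now known) interface data and the covector, with $\beta$ carrying the $(\p_t\phi_T)^2/(2c_S^2\xi_{3,T}^2)$ dependence so that $\alpha,\beta$ are genuinely independent as the angle of incidence varies.

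Finally, I would evaluate this relation at two covectors $(x,\tau_1,\xi_1')$, $(x,\tau_2,\xi_2')$ with $|\xi_1'|/\tau_1\neq\pm|\xi_2'|/\tau_2$, both non-glancing, exactly as in Lemma \ref{l: recover 2 params from 0'th order reflect}. This yields a $2\times 2$ linear system for the unknowns $\p_{x_3}\log\sqrt{\rho^{(+)}}$ and $\p_{x_3}\log c_S^{(+)}$; checking that its coefficient matrix (built from $\alpha,\beta$ at the two covectors) is invertible — because the $\theta$-dependent factor multiplying $\p_{x_3}\log c_S$ is non-constant in the incidence angle — gives unique solvability, hence $\p_{x_3}\rho^{(+)}=\p_{x_3}\tilde\rho^{(+)}$ and $\p_{x_3}c_S^{(+)}=\p_{x_3}\tilde c_S^{(+)}$ on $\Gamma$. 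The main obstacle is the bookkeeping in the previous paragraph: one must verify carefully that every term hidden in ``$R_0$'' and in the $(a_\bullet)_0,(a_T)_0$ factors truly involves only zeroth-order interface data (already recovered) and no normal derivatives of $\rho^{(+)},c_S^{(+)}$, so that $\alpha$ and $\beta$ are known and the system really is linear in the two sought derivatives; the non-degeneracy of the angular dependence is then a short explicit check.
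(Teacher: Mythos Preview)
Your proposal is correct and follows essentially the same approach as the paper: invert the $J=-1$ transmission system to isolate $\mathcal{J}$, substitute Lemma~\ref{l: acoustic partial_x_3 a_0 formula} so that only the $\bullet=T$ contribution carries the unknown normal derivatives, and then vary the incidence angle to decouple $\p_{x_3}\log\sqrt{\rho^{(+)}}$ from $\p_{x_3}\log c_S^{(+)}$. The paper phrases the last step as a uniqueness argument (subtracting the tilded equation and using $f\neq f_1$) rather than as solving a $2\times2$ linear system, but the two formulations are equivalent.
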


For the higher order derivatives of the coefficients we have an analogous lemma. For the smooth case, the following lemma reduces to \cite[Lemma 3.10]{RachBoundary}.
\begin{lemma}\label{l: acoustic higher derivatives derivation}
 Fix $J \in \mathbb \{-2, -3, \dots\}$ and assume that $\mu$, $\rho$ are known on $\Omega_{-}$, and $(a_R)_0,\dots,(a_R)_{1+J}$ are known on $\Gamma$. Then
$\p_{x_3}^{\abs{J}}c^{(+)}_S$ and $\p_{x_3}^{\abs{J}}\rho^{(+)}$ are uniquely determined by $(a_R)_J$ at $\Gamma$. In fact, we have the following explicit relation
\begin{align}\label{e: (a_R)_J equation for acoustic}
(a_R)_J &=  -(-i/(2\xi_{3,T}))^J \left[(\p^{\abs{J}}_{x_3}\log \sqrt {\rho^{(+)}}) \right.\\ \nonumber
&\qquad \qquad \qquad
 \left.+\p^{\abs{J}}_{x_3}\log c^{(+)}_S\left(1 - \frac{(\p_t \phi_T)^2}{2c_S^2 \xi^2_{3,T}}\right)\right]\frac{(a_T)_{J+1}}{R_{\abs{J+1}}} + R_{\abs{J+1}}
\end{align}
\end{lemma}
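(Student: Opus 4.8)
The plan is to establish the explicit formula \eqref{e: (a_R)_J equation for acoustic} by induction on $|J|$, the inputs for the base case $|J|=2$ being Lemma~\ref{l: first derivative} and Lemma~\ref{l: acoustic partial_x_3 a_0 formula}; once \eqref{e: (a_R)_J equation for acoustic} is in hand, the unique determination of $\partial_{x_3}^{|J|}c_S^{(+)}$ and $\partial_{x_3}^{|J|}\rho^{(+)}$ follows by the two–covector argument used in the proof of Lemma~\ref{l: recover 2 params from 0'th order reflect}. The first step is to reduce to normal derivatives of the order-$(J{+}1)$ amplitudes on $\Gamma$: in the nonglancing region $\xi_{3,I},\xi_{3,T}$ are bounded away from $0$, so the $2\times 2$ matrix on the left of the transmission relation \eqref{e: transmission conditions} is invertible with $R_0$ entries, and therefore $(a_R)_J|_\Gamma$ (and $(a_T)_J|_\Gamma$) is determined, up to $R_0$-multiples, by the three normal derivatives $\partial_{x_3}(a_I)_{J+1}$, $\partial_{x_3}(a_R)_{J+1}$, $\partial_{x_3}(a_T)_{J+1}$ restricted to $\Gamma$.

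The heart of the argument is to express each $\partial_{x_3}(a_\bullet)_{J+1}|_\Gamma$ in terms of the parameters by iterating the transport equations \eqref{e: transport equations} in the form \eqref{key_5}. Differentiating the order-$J'$ transport equation $k$ times in $x_3$ and restricting to $\Gamma$ expresses $\partial_{x_3}^{k+1}(a_\bullet)_{J'}|_\Gamma$ through $\partial_{x_3}^{j}(a_\bullet)_{J'}|_\Gamma$ with $j\le k$, through $\partial_{x_3}^{j}\bigl(P(x,D_t,D_x)(a_\bullet)_{J'+1}\bigr)|_\Gamma$ with $j\le k$ (absent when $J'=0$), and through normal derivatives of $c_S,\rho,\phi_\bullet$; since $P$ is second order, the middle term brings in $\partial_{x_3}^{k+2}(a_\bullet)_{J'+1}|_\Gamma$ with leading coefficient $\mu$, together with lower normal derivatives of $(a_\bullet)_{J'+1}$ multiplied by normal derivatives of $\mu$. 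Chaining these relations downward from $J'=J+1$ to $J'=0$, and using at the bottom the generalization of \eqref{e: acoustic partial_{x_3}a_0 formula} to $\partial_{x_3}^{k}(a_\bullet)_0|_\Gamma$ (proved by a parallel induction on $k$ from the same two identities, with $k$ running up to $|J|$), one finds that $\partial_{x_3}(a_\bullet)_{J+1}|_\Gamma$ is an $R_{|J|}$-quantity; and after tracking the highest-order term through each step and rewriting the $\partial_{x_3}^{|J|}\mu$ contributions coming from $\nabla\mu\cdot\nabla$ and from $P\phi_\bullet$ as $\partial_{x_3}^{|J|}(\log\rho,\log c_S)$-combinations via $\mu=\rho c_S^2$ and the Eikonal equation \eqref{Eikonal_eq}, the only contribution to $(a_R)_J$ not already lying in $R_{|J|-1}$ is the single bracket term displayed in \eqref{e: (a_R)_J equation for acoustic}.

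To finish, fix $x\in\Gamma$ and evaluate \eqref{e: (a_R)_J equation for acoustic} at two nonglancing covectors $(\tau_1,\xi_1'),(\tau_2,\xi_2')$ with $|\xi_1'|/\tau_1\neq\pm|\xi_2'|/\tau_2$. Since $\partial_t\phi_T|_\Gamma=-\tau$ and $\xi_{3,T}^2=|\xi'|^2-(c_S^{(+)})^{-2}\tau^2$, the factor $1-(\partial_t\phi_T)^2/(2c_S^2\xi_{3,T}^2)$ takes two distinct values at these covectors, so the resulting $2\times 2$ system for $\bigl(\partial_{x_3}^{|J|}\log\sqrt{\rho^{(+)}},\,\partial_{x_3}^{|J|}\log c_S^{(+)}\bigr)$ has invertible coefficient matrix; by the inductive hypothesis the prefactor $(-i/(2\xi_{3,T}))^J$, the coefficient $(a_T)_{J+1}/R_{|J+1|}$, and the additive $R_{|J+1|}$-terms are all already known (they involve only normal derivatives of $c_S,\rho$ of order $\le |J|-1$, the transmission data of orders $0,\dots,J+1$, and quantities in $\overline\Omega_-$). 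Solving the system and then invoking Fa\`a di Bruno's formula, all lower normal derivatives of $\rho^{(+)},c_S^{(+)}$ being known, recovers $\partial_{x_3}^{|J|}\rho^{(+)}$ and $\partial_{x_3}^{|J|}c_S^{(+)}$, which closes the induction.

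The main obstacle is the bookkeeping in the second step: one must verify that no normal derivative of $c_S$ or $\rho$ of order exceeding $|J|$ is ever produced, and that the order-$|J|$ terms — especially the a priori dangerous $\partial_{x_3}^{|J|}\mu$ terms from the non-principal part of $P$ and from $P\phi_\bullet$ — assemble precisely into the bracket of \eqref{e: (a_R)_J equation for acoustic} with a coefficient that is nonvanishing at generic covectors. This is the symbol-order-by-symbol-order analogue of the computation behind \cite[Lemma 3.10]{RachBoundary}, and the flat-interface assumption keeps the Hamilton geometry ($\xi_{3,R}=-\xi_{3,I}$ and transversality of the bicharacteristics to $\Gamma$) simple throughout; the curved case is treated in Section~\ref{s: nonflat case}.
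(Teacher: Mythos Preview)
Your proposal is correct and follows essentially the same approach as the paper: invert the transmission system \eqref{e: transmission conditions} to reduce $(a_R)_J$ to $\partial_{x_3}(a_T)_{J+1}$ modulo $R_{|J+1|}$ (the $\partial_{x_3}(a_{I/R})_{J+1}$ contributions being known from $\Omega_-$), iterate the transport equations \eqref{e: transport equations} via \eqref{key_5} to express $\partial_{x_3}(a_\bullet)_{J+1}|_\Gamma$ as the displayed bracket times $(a_\bullet)_0$ plus $R_{|J|}$, and then run the two-covector argument from Lemma~\ref{l: first derivative} to decouple $\partial_{x_3}^{|J|}\log\sqrt{\rho^{(+)}}$ from $\partial_{x_3}^{|J|}\log c_S^{(+)}$. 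The paper's write-up is somewhat terser about the chaining step (it computes the $J=-1$ case explicitly and then simply states the iterated formula \eqref{e: p_x_3 (a)_J higher order}), and it does not invoke Fa\`a di Bruno, but the substance is the same.
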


\begin{proof}[Proof of Theorem \ref{Th_main_acoustic}] The assumption $R \equiv \tilde R$ implies that the full symbol of $R$ coincides with the full symbol of $\tilde R$ away from the glancing set. By construction, this means $(a_R)_J = (\tilde a_R)_J$ for each $J$ on $\RR_t \times \Gamma_-$ so by Lemma \ref{l: acoustic higher derivatives derivation}, we may recover $\mu^{(+)}$ and $\rho^{(+)}$. By Lemma \ref{l: acoustic partial_x_3 a_0 formula} and \ref{l: acoustic higher derivatives derivation} we recover $\p_{x_3}^{J}c^{(+)}_S$ and $\p_{x_3}^{J}\rho^{(+)}$ for each $J=1, 2,\dots$.
\end{proof}

\section{Elastic waves and proof of Theorem \ref{t: Elastic case}}\label{Sec_statement}

Recall the isotropic elastodynamic wave equation as
\beq
\begin{aligned}\label{Elastic_OP}
 \rho\p_{t}^2U - \nabla \cdot( \lambda \div \otimes \Id + 2\mu \widehat{\nabla})U =& 0,\qquad &&\mbox{in }(0,\infty)\times\Omega,\\
U(t,x)\restriction_{\partial\Omega} =& f(t,x), \qquad &&\mbox{for } (t,x) \in (0,\infty)\times\partial\Omega,\\
U\restriction_{t=0} = 0,\quad \p_t U\restriction_{t=0} =& 0, \qquad &&\mbox{in }\Omega.
\end{aligned}
\eeq
We define the compressional wave speed $c_P$ and the shear wave speed $c_S$ as
\begin{equation*}
c_P=  \sqrt{\frac{\lambda+2\mu}{\rho}}, \qquad c_S = \sqrt{\frac{\mu}{\rho}}, \quad \mbox{in }\Omega.
\end{equation*}
The proof of Theorem \ref{t: Elastic case} will proceed in a series of steps analogous to the acoustic case and detailed at the start of section \ref{Sec_Acoustic}. As in that case, we start with a geometric optics solution of \eqref{Elastic_OP} near $\Gamma$ and then derive the reflection operator $R$ with its full symbol.

Since we consider our analysis only on $\Gamma$, we can shrink $\Omega$ to be a small neighbourhood of $\Gamma$. Considering $\Omega$ as a neighbourhood of $\Gamma$
we construct the geometric optic solutions for the elastic wave equation \eqref{Elastic_OP} given as
\begin{equation}\label{GO_form}
\left(U_{\bullet}\right)_l
= \sum_{\star=\PS}\sum_{m=1,2,3}\int e^{i\phi_{\bullet,\star}(t, x, \tau, \xi')} A^{l,m}_{\bullet,\star}(t, x, \tau, \xi')\widehat{f}_m(\tau, \xi') d \tau d\xi', \quad l=1,2,3
\end{equation}
where $\bullet = I/R/T$ denotes the incoming, the reflected or the transmitted wave field.
Note that $U_{I/R}$ travels through $\Omega_{-}$, whereas $U_T$ is on $\Omega_{+}$.
The phase functions $\phi_{\bullet,\PS}$ satisfies the Eikonal equations
\begin{equation}\label{elastic_eikonal_equation}
\abs{\p_t \phi_{\bullet,\PS}}^2 = c^2_{\PS}\abs{\nabla_{x} \phi_{\bullet,\PS}}^2,
\quad \mbox{such that }\quad
\phi_{\bullet,\PS}(t,x)\restriction_{x_3=0} = -t \tau + x' \cdot \xi'.
\end{equation}
Similar to the acoustic wave case, $\phi_{\bullet,\PS}$ is homogeneous of order $1$ in $\abs{(\tau,\xi')}$.
We define the quantity
\begin{equation}\label{xi_3_ps}
\xi_{3,\bullet,\PS} := \sqrt{\abs{\nabla_{x'}\phi_{\bullet,\PS}}^2- c_{\PS}^{-2}\abs{\p_t \phi_{\bullet,\PS}}^2},
\end{equation}
and observe that $\xi_{3,R,\PS} = -\xi_{3,I,\PS}$.
The amplitudes $(A_{\bullet, \PS})_J$ are homogeneous of order $|J|$ in $\abs{(\tau,\xi')}$ and solves the following iterative equations
\begin{multline}\label{eq_transport}
p(t,x,\partial_t\phi_{\bullet,P/S},\nabla_x \phi_{\bullet,P/S})(A_{\bullet,\PS})_{J-1}\\
= \mathcal{B}_{\bullet,P/S}(A_{\bullet,\PS})_{J} + \mathcal{C}_{\bullet,P/S}(A_{\bullet,\PS})_{J+1}, \quad \forall J=0,-1,-2,\dots,
\end{multline}
where $(A_{\bullet,\PS})_{1} = 0$ and $p(t,x,\tau,\xi)$ is the principal symbol of the operator $P$, given as $p(t,x,\tau,\xi):= (-\rho\tau^2 + \mu\abs{\xi}^2)\Id + (\lambda+\mu)(\xi\otimes\xi)$. Also, $p_{i,j}$ refers to the $ij$'th entry of the matrix symbol.
The operators $\mathcal{B}_{\bullet,P/S}$ and $\mathcal{C}_{\bullet,P/S}$ are given as
\begin{align*}
\left(\mathcal{B}_{\bullet,P/S}M\right)_{k_1,k_2}
&:=i\left(\partial_{\tau,\xi}p(t,x,\partial_t\phi_{\bullet,P/S},\nabla_x \phi_{\bullet,P/S}) \cdot \partial_{t,x}M\right)_{k_1,k_2}\\
&\qquad - \left(p_1(t,x,\partial_t\phi_{\bullet,P/S},\nabla_x \phi_{\bullet,P/S})M\right)_{k_1,k_2}\\
&\qquad +\frac{i}{2}\sum_{\abs{\alpha}=2}\sum_{l=1}^{3}\partial^{\alpha}_{\tau,\xi}p_{k_1,l}(t,x,\partial_t\phi_{\bullet,P/S},\nabla_x \phi_{\bullet,P/S}) \\
&\qquad \qquad \qquad \qquad \qquad  \cdot \left(\partial^{\alpha}_{t,x}\phi_{\bullet,P/S}\right)M_{l,k_2}\\
\left(\mathcal{C}_{\bullet,P/S}M\right)_{k_1,k_2}
:=\     &i\left(\partial_{\tau,\xi}p_1(t,x,\partial_t\phi_{\bullet,P/S},\nabla_x \phi_{\bullet,P/S}) \cdot \partial_{t,x}M\right)_{k_1,k_2}\\
&+\frac{1}{2}\sum_{\abs{\alpha}=2}\sum_{l=1}^{3}\partial^{\alpha}_{\tau,\xi}p_{k_1,l}(t,x,\partial_t\phi_{\bullet,P/S},\nabla_x \phi_{\bullet,P/S}) \cdot \left(\partial^{\alpha}_{t,x}M_{l,k_2}\right),
\end{align*}
where $p_1(t,x,\tau,\xi) = -i\left[ \nabla_x\lambda \otimes \xi + (\nabla_x\mu\cdot\xi)\Id + (\xi \otimes \nabla_x\mu) \right]$ is the lower order terms of the symbol of $P$.

We consider the elastic wave parametrix as
\begin{multline*}
\left(u_\bullet(t,x,\tau,\xi')\right)_m := \sum_{\star=\PS} e^{i\phi_{\bullet,\star}(t, x, \tau, \xi')} A^{\cdot,m}_{\bullet,\star}(t, x, \tau, \xi')
\\= \sum_{\star=\PS} e^{i\phi_{\bullet,\star}(t, x, \tau, \xi')} a^{m}_{\bullet,\star}(t, x, \tau, \xi'),
\end{multline*}
where $a^{m}_{\bullet,\PS}$ is the vector-field with components to be $\left(a^{m}_{\bullet,\PS}\right)_l = A^{l,m}_{\bullet,\PS}$.
For the sake of notational simplicity, we denote $a^{1}_{\bullet,\PS}$ by $a_{\bullet,\PS}$.

 We define $N:=\frac{\nabla_x\phi_{\bullet,P}} {\abs{\nabla_x\phi_{\bullet,P}}}$ be the unit vector in the kernel of $p(t,x,\partial_t\phi_{\bullet,P},\nabla_x \phi_{\bullet,P})$. Take $N_1,N_2$ two orthonormal vectors in the kernel of $p(t,x,\partial_t\phi_{\bullet,S},\nabla_x \phi_{\bullet,S})$ such that $\{ N_1, N_2, \frac{\nabla_x\phi_{\bullet,S}}{\abs{\nabla_x\phi_{\bullet,s}}}\}$ forms an orthonormal basis for $\R^3$.
From the transport equation \eqref{eq_transport} one easily obtains the following compatibility condition
\begin{equation}\label{eq_compatibility}
N_{P/S}\left[ \mathcal{B}_{\bullet,P/S}(a_{\bullet,\PS})_{J} + \mathcal{C}_{\bullet,P/S}(a_{\bullet,\PS})_{J+1} \right] = 0, \quad \forall J=0,-1,-2,\dots,
\end{equation}
where $N_P = N = \frac{\nabla_x\phi_{\bullet,P}}{\abs{\nabla_x\phi_{\bullet,P}}}$ and $N_S = N_1$ or $N_2$.
The amplitudes $(a_{\bullet,\PS})_0$ are written in the form
\begin{multline}\label{eq_1}
(a_{\bullet,\PS})_J = (h_{\bullet,\PS})_J+\begin{cases}
(\alpha_{0,\bullet})_J N_\bullet & \text{ if }\phi_\bullet = \phi_{\bullet, P} \\
(\alpha_{1,\bullet})_J N_{1, \bullet}
+(\alpha_{2,\bullet})_J N_{2, \bullet} & \text{ if }\phi_\bullet = \phi_{\bullet, S}
\end{cases}, \\ \qquad J=0, -1,\dots,
\end{multline}
for some vector $(h_{\bullet,\PS})_J$ in the co-kernel of $p(t,x,\p_{t,x}\phi_\bullet)$, to be determined  ($(h_{\bullet,\PS})_0 = 0$).
%
%
We write
\begin{equation*}
\mbox{for } J=-1,-2,\dots,\qquad
\begin{cases}
h_{\bullet,P} = (\gamma_{1,\bullet})_J M_1 + (\gamma_{2,\bullet})_J M_2,\\
h_{\bullet,S} = (\gamma_{\bullet})_J M,
\end{cases}
\end{equation*}
where $M$, $M_1$ and $M_2$ are orthogonal to $\PS$ waves, given as
\begin{equation*}
M_1 = -ie_3 \times (\xi',0),
\qquad M_2 = -\xi_{3,\bullet,P}(\xi',0) + \abs{\xi'}^2e_3,
\qquad M = -i\xi_{\bullet,S},
\end{equation*}
where $\xi_{\bullet,\PS} := (\xi',0) + \xi_{3,\bullet,\PS}e_3$ and the $\alpha_{i,\bullet}$ are vector symbols for $i=0,1,2$.

\subsection{P/S mode projections}
First we construct a $\PS$-mode projector $\Pi_{\PS}$, microlocally projects the elastic wave field $u$ to the compressive ($P$) and the shear ($S$) wave fields for a small time-interval, as
$\Pi_{\PS} u_{\bullet} = u_{\bullet,\PS} = e^{i\phi_{\bullet}} a_{\bullet,\PS}$.
Observe that the elasticity operator $\Op$, as defined in \eqref{Elastic_OP}, has the principal symbol $p(t,x,\tau,\xi)$ given by a $3\times 3$-matrix as
\begin{equation}
p(t,x,\tau,\xi) = -\rho\left[\left(\tau^2-c_S^2\abs{\xi}^2\right)I_{3\times3} - \left(c_P^2-c_S^2\right)(\xi\otimes\xi)\right].
\end{equation}

Observe that $p(t,x,\tau,\xi)$ has eigenvalues $\rho\left(\tau^2-c_P^2\abs{\xi}^2\right)$ and $\rho\left(\tau^2-c_S^2\abs{\xi}^2\right)$ with multiplicity $1$ and $2$ respectively. The matrix $p$ can be diagonalised and there exists unitary matrix $V(t,x,\tau,\xi)$ such that
\begin{equation*}
V p(t,x,\tau,\xi) V^{-1} = \rho\begin{pmatrix}\tau^2-c_P^2\abs{\xi}^2 &0 &0\\0 &\tau^2-c_S^2\abs{\xi}^2 &0\\0 &0 &\tau^2-c_S^2\abs{\xi}^2\end{pmatrix} = D(t,x,\tau,\xi).
\end{equation*}
We now consider the symbol
\begin{equation}\label{Symbol_mode-projector}
\Pi_P(t,x,\tau,\xi) := V^{-1}\bmat1 &0 &0\\0 &0 &0\\0 &0 &0\emat V
\qquad \mbox{and}\qquad
\Pi_S(t,x,\tau,\xi) := V^{-1}\bmat 0 &0 &0\\0 &1 &0\\0 &0 &1 \emat V.
\end{equation}
One can equivalently write the mode projection operators $\Pi_{\PS}$ as
\begin{equation*}
\Pi_{\PS}u_{\bullet} = \int e^{i\phi_{\bullet,\PS}}a_{P/S}(t,x,\tau,\xi) \widehat{h}(\tau,\xi')\,d\tau\,d\xi' = u_{\bullet,\PS},
\end{equation*}
where $u_{\bullet}$ is as defined in \eqref{GO_form}.
Observe that the symbol of $\Pi_{\PS}$ is homogeneous of order $0$ in $\abs{\xi}$ and thus $\Pi_{\PS}$ represents a $0$-th order pseudodifferential operator.

\subsection{The elastic transmission conditions}
Let $u_I$ be the incoming wave parametrix corresponding to \eqref{Elastic_OP}, travels through $\Omega_{-}$, approaching the interface $\Gamma$. Let $h:=\rho_{\Gamma_{-}}u_I$, where $\rho_{\Gamma_{\pm}}$ are the restriction operators on $\Gamma_{\pm}$.
Denote $R$ and $T$ to be the well-known reflection and transmission operators on $\Gamma$ respectively. As calculated in \cite{CHKUElastic} $R$, $T$ are pseudo-differential operators ($\Psi$DO) of order $0$ on $\Gamma$.
Let $f := \rho_{\Gamma^-}u_I \in \mathcal{E}'(\Gamma^- \times \RR_t)$, where $\rho_{\Gamma^-}$ is the restriction to $\Gamma$ from above. The reflected wave field $u_R$ and the transmitted wave field $u_T$ starts from $\Gamma_{-}$ and $\Gamma_{+}$ respectively, with the boundary data as $\rho_{\Gamma_{-}} u_R = Rf$ and $\rho_{\Gamma_{+}} u_T = Tf$.


We define the Neumann operator at $\Gamma$, given as
\begin{equation}\label{elastic_Neumann}
\mathcal N_{\bullet} u_{\bullet} = (\lambda \div \otimes \text{I} + 2\mu \hat \nabla)u_{\bullet} \cdot \nu_{\bullet}\restriction_\Gamma,
\end{equation}
where $\nu$ is the outward unit normal vector at $\Gamma$ i.e. for $\bullet = I/R$ consider $\nu_{\bullet}$ to be the normal unit vector on $\Gamma$ pointing towards $\Omega_{+}$ and for $\bullet=T$, $\nu_{T}$ is the unit normal vector pointing towards $\Omega_{-}$.
The elastic transmission conditions on the interface $\Gamma$ from \eqref{e: elastic transmission conditions} become
\begin{align}\label{elastic_transmission}
u_I + u_R =& u_T\\
\mathcal{N}_{I}u_I + \mathcal{N}_{R}u_R =& \mathcal{N}_{T}u_T. \nonumber
\end{align}
Recall that we assume $\Gamma = \{ x_3=0\}$ and $\Omega_{\pm} \subset \{ x\in\R^3 : \pm x_3>0 \}$.
Now, with $\nu_{\bullet} = \pm (0,0,1) = \pm e_3$ we see that
\begin{equation*}
\mathcal{N}_{\bullet}u_{\bullet} = B_{\bullet}(x,D_x)u_{\bullet},
\end{equation*}
where the matrix operator $B_{\bullet} \in \text{Diff}^1(\Omega)$ is defined as
\begin{multline*}
B_{\bullet}(x,D_x) u_{\bullet}
= \bmat 0 &0 &\mu^{(\pm)}\p_{x_1}\\0&0&\mu^{(\pm)}\p_{x_2}\\\lambda^{(\pm)}\p_{x_1}&\lambda^{(\pm)}\p_{x_2}&0 \emat u_{\bullet}\\
\pm \bmat \mu^{(\pm)}&0&0\\ 0&\mu^{(\pm)} &0\\0&0&(\lambda^{(\pm)}+2\mu^{(\pm)})\emat \p_{x_3} u_{\bullet},
\quad \mbox{on }\Gamma,
\end{multline*}
where the sign $(\pm)$ in the above expression changes according to the sign of $\nu_{\bullet}$.
%
Since we are working only at the boundary, we will sometimes use the operators $B_{\bullet}$ and $\rho_\Gamma \circ B_{\bullet}$ interchangeably where $\rho_\Gamma$ is restriction to the interface.

First, we work with the case $(u_{\bullet})_0$ i.e. the term of $u_{\bullet}$ which are homogeneous of order $0$ in $\abs{\xi}$.
We also compute
\begin{equation*}
B_{\bullet}(x,D_x)e^{i\phi}a = e^{i\phi}(B_{\bullet}(x, \p_x \phi)a + B_{\bullet}(x,D_x)a).
\end{equation*}
It is convenient to introduce the shorthand $B(\phi_{\bullet,\PS})$ as the bundle endomorphism $B_{\bullet}(x, \p_x \phi_{\bullet,\PS}).$
Note that the transmission conditions of different order of homogeneity should be dealt separately.

Here we discuss the case of the $0$-th order transmission conditions on the interface $\Gamma$. The higher order transmission conditions have been discussed in the later subsections.
For $J=0$, using the form of the parametrix \[(u)_0 = \left(e^{\phi_{\bullet,P}}(a_P)_0 + e^{\phi_{\bullet,S}}(a_S)_0\right),\]
we form the $3 \times 3$ matrix $S_\bullet = [N_\bullet \abs{ N_{1,\bullet} }N_{2,\bullet}]$, where $N_\bullet$, $N_{1,\bullet}$, $N_{2,\bullet}$ are as in \eqref{eq_1} and \[(\mathcal{A}_\bullet)_0= \col{(\alpha_{\bullet})_0\\(\alpha_{1,\bullet})_0\\ (\alpha_{2,\bullet})_0}.\]
It is convenient to define
\begin{equation*}
\trac_{\bullet} = \bmat  l & l & l \\ B(\phi_{\bullet,P})N_\bullet & B(\phi_{\bullet,S})N_{1,\bullet} & B(\phi_{\bullet,S})N_{2,\bullet}\\l&l&l \emat,
\end{equation*}
where this $3 \times 3$ matrix is $0$-th order in the parameters.
Since, $\trac_{\bullet}$ is of order $1$ in $\abs{\xi}$, therefore,
the transmission conditions in \eqref{elastic_transmission} become
\begin{align*}\label{E_T_2}
S_I(\mathcal{A}_{I})_0 + S_R (\mathcal{A}_{R})_0 =& S_T (\mathcal{A}_{T})_0\\
\trac_{I} (\mathcal{A}_I)_0 + \trac_{R} (\mathcal{A}_R)_0
=& \trac_{T} (\mathcal{A}_T)_0
+ \left(B_{T}(x,D_x)\left(a_{T,P}+a_{T,S}\right)\right)_{1}\\
&- \left(B_{I}(x,D_x)(a_{I,P} + a_{I,S})\right)_{1}\\
&- \left(B_{R}(x,D_x)(a_{R,P} + a_{R,S})\right)_{1}.
\end{align*}

Since $B_{\bullet}(x,D_x)\left(a_{\bullet,P/S}\right)_J$ is homogeneous of order $J$ in $\abs{(\tau,\xi)}$, hence, $\left(B_{\bullet}(x,D_x)a_{\bullet,P/S}\right)_1 = 0$.
Therefore, the elastic transmission conditions for $J=0$ implies
\begin{equation}\label{e: elastic 0'th order trans conditions}
\bmat -S_R & S_T \\- \trac_R & \trac_T \emat \col{ (\mathcal{A}_R)_0 \\
(\mathcal{A}_T)_0} = \col{ S_I (\mathcal{A}_I)_0 \\ \trac_I (\mathcal{A}_I)_0}
\text{ on }\Gamma.
\end{equation}

\subsection{Parameters at the interface}
We start from the transmission conditions \eqref{e: elastic 0'th order trans conditions}. Observe that this is not quite the same situation as in the acoustic case \eqref{zeroth_order_wave}, since the given reflection coefficient is not $\alpha_R$ but rather $S_R \alpha_R$.
However, note that $S_R$ is completely determined by the material parameters above the interface, i.e. $\Gamma_-$ that we have access to. Hence, we may assume instead of $R = S_R \alpha_R$, we are indeed given $\alpha_R$, and the goal is to determine the material parameters below the interface. However, this is a calculation already done in \cite{CHKUElastic}. To make the connection, we will write using the ansantz, $(\mathcal{A}_R)_0 = R(\mathcal A_I)_0, (\mathcal A_T)_0 = T(\mathcal A_I)_0$ with $R, T$ being $3\times 3$ matrices of symbols. Then
\begin{equation*}
\col{(\mathcal{A}_R)_0 \\ (\mathcal{A}_T)_0} = \bmat -S_R & S_T \\- \trac_R & \trac_T \emat \col{ S_I \\ \trac_I}(\mathcal{A}_I)_0.
\end{equation*}
The symbols $R, T$ are exactly those computed in \cite{CHKUElastic}. Since $(\mathcal{A}_I)_0$ can be anything, we indeed recover $R$.

\begin{lemma}\label{lem_1}
	Let $\rho^{(-)}$, $c_S^{(-)}$, $c_P^{(-)}$ be known on $\Omega_{-}$. Then the knowledge of $R$ at the interface $\Gamma$ uniquely determines $\rho^{(+)}$, $c_P^{(+)}$ and $c_S^{(+)}$ on $\Gamma$.
\end{lemma}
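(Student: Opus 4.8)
The plan is to exploit that the principal symbol of the reflection operator is produced \emph{explicitly and algebraically} by the zeroth-order transmission system \eqref{e: elastic 0'th order trans conditions}: inverting the $6\times 6$ coefficient matrix there, exactly as in the paragraph preceding the lemma, expresses the principal-symbol matrices $R$ and $T$ in terms of $S_I,S_R,\trac_I,\trac_R,S_T,\trac_T$, so $R$ is the Knott--Zoeppritz reflection matrix already computed in \cite{CHKUElastic}. It is a rational function of $\rho^{(\pm)},c_P^{(\pm)},c_S^{(\pm)}$ and of the covector $(\tau,\xi')$ (through the vertical slownesses $\xi_{3,\bullet,\PS}$) and involves \emph{no} derivatives of the parameters. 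The blocks $S_I,S_R,\trac_I,\trac_R$ are assembled only from the $\Omega_-$ data and are therefore known; in particular $S_R$ is known, so — as observed before the statement — the raw datum $S_R\alpha_R$ already determines the matrix $R$ itself. Thus for each non-glancing covector we are handed the value of a known rational function of the three unknowns $\rho^{(+)},c_P^{(+)},c_S^{(+)}$, and only the principal symbol is used; inverting this is the whole content of the lemma.

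First I would pass to the orthonormal frame adapted to the plane of incidence, spanned by $(\xi',0)$ and $e_3$, and its orthogonal $SH$ direction; concretely one takes, for each $\bullet = I,R,T$, the vector $N_1$ proportional to $e_3\times(\xi',0)$ (the $SH$ polarization, i.e.\ the vector $M_1$ appearing above). By the classical $P$-$SV$/$SH$ decoupling for a flat interface, the system \eqref{e: elastic 0'th order trans conditions} block-diagonalizes in this frame, so $R$ splits into a scalar $SH$ reflection coefficient and a $2\times 2$ $P$-$SV$ block. The scalar $SH$ coefficient involves only $\mu^{(\pm)}=\rho^{(\pm)}(c_S^{(\pm)})^2$ and $\xi_{3,\bullet,S}$ and has exactly the shape of the acoustic reflection coefficient \eqref{zeroth_order_wave}, with $(\mu,c_S)$ in the roles of the acoustic $(\mu,c_S)$; hence the argument of Lemma \ref{l: recover 2 params from 0'th order reflect}, applied to it at two non-glancing covectors with $\abs{\xi_1'}/\tau_1\neq\pm\abs{\xi_2'}/\tau_2$, recovers $\mu^{(+)}$ and $c_S^{(+)}$, and therefore $\rho^{(+)}=\mu^{(+)}/(c_S^{(+)})^2$.

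With $\rho^{(+)},\mu^{(+)},c_S^{(+)}$ known, the only remaining unknown is $\lambda^{(+)}$, equivalently $c_P^{(+)}=\sqrt{(\lambda^{(+)}+2\mu^{(+)})/\rho^{(+)}}$, which enters the $2\times 2$ $P$-$SV$ block through $\lambda^{(+)}$ in $\trac_T$ and through $\xi_{3,T,P}=\sqrt{(c_P^{(+)})^{-2}\tau^2-\abs{\xi'}^2}$. I would recover it from a near-normal covector (taking $\abs{\xi'}/\abs{\tau}$ small, which stays non-glancing since the vertical slownesses remain bounded away from zero there): as $\xi'\to 0$ the mode-conversion entries of the $P$-$SV$ block vanish and its $PP$ entry reduces to an impedance-contrast coefficient of the form $\pm(\rho^{(-)}c_P^{(-)}-\rho^{(+)}c_P^{(+)})/(\rho^{(-)}c_P^{(-)}+\rho^{(+)}c_P^{(+)})$, a strictly monotone function of the product $\rho^{(+)}c_P^{(+)}$; since $\rho^{(+)}$ is already known, this pins down $c_P^{(+)}$, hence $\lambda^{(+)}$. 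Combining the three steps yields $\rho^{(+)},c_P^{(+)},c_S^{(+)}$ on $\Gamma$.

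The step I expect to be the main obstacle is the injectivity bookkeeping of the last two paragraphs: one must check that the relevant scalar Zoeppritz coefficients are genuinely invertible in the parameter they are meant to determine — the two-covector argument for the $SH$ coefficient (which can be taken over verbatim from Lemma \ref{l: recover 2 params from 0'th order reflect}) and, for the $P$-$SV$ block, both that the conversion entries vanish exactly in the normal-incidence limit and that the surviving $PP$ entry is strictly monotone in $\rho^{(+)}c_P^{(+)}$. A minor preliminary is the passage from the measured $S_R\alpha_R$ to the matrix $R$ of \eqref{e: elastic 0'th order trans conditions}; this is harmless because $S_R$ depends only on the known $\Omega_-$ parameters, but it has to be invoked before any of the algebra applies.
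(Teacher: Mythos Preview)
Your proposal is correct and follows essentially the paper's approach. The paper also singles out the $SH$ entry (written there as $r_{33}$, quoting \cite{CHKUElastic}), observes it has exactly the acoustic form, and applies the two-covector argument of Lemma~\ref{l: recover 2 params from 0'th order reflect} verbatim to recover $\mu^{(+)},c_S^{(+)}$ and hence $\rho^{(+)}$; for $c_P^{(+)}$ the paper says only that ``the other entries of $R$ can be used to recover the remaining parameter $c_P^{(+)}$ with the analogous argument,'' so your normal-incidence reading of the $PP$ entry is a concrete and perfectly valid way to realise that step.
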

\begin{proof}
	From \cite[Appendix A]{CHKUElastic}, the $r_{33}$ entry of $R$ is given as
	\begin{equation}\label{key_2}
	r_{33} = \frac{\mu^{(-)}\xi_{3,I,S} - \mu^{(+)}\xi_{3,T,S}}{ \mu^{(-)}\xi_{3,I,S} + \mu^{(+)}\xi_{3,T,S}}.
	\end{equation}
For the term $r_{33}$, we are in the same situation as in the Lemma \ref{l: recover 2 params from 0'th order reflect} for the acoustic case and using the same calculations done in the proof we recover $\rho^{(+)},c_S^{(+)}$ using just two values of $(\abs{\xi'}/\tau)$.
For the completion of the article we present the proof here.
We denote $f = \xi_{3,T,S}/\xi_{3,I,S}$, $a = \mu^{(-)}$, $c = \mu^{(+)}$.  Note that $f = f(\abs{\xi'}/\tau)$ i.e. it is a function of the parameter $\abs{\xi'}/\tau$ while $a,c$ only depend on $x$.
Now, assume that $r_{33} = \widetilde{r}_{33}$ and $\mu^{(-)} = \tilde{\mu}^{(-)}$ (i.e. $a=\tilde{a}$) on $\Gamma$ we obtain
\begin{align}\label{key_3_1}
\frac{af-c}{af+c} = \frac{a\tilde f-\tilde c}{a\tilde f+\tilde c},\quad
\Leftrightarrow \quad a\tilde c f = ac \tilde f, \quad
\Leftrightarrow \quad \frac{c}{\tilde c} = \frac{\tilde f}{f}.
\end{align}
Varying $\abs{\xi'}/\tau$ and keeping everything else constant, we get
\begin{equation*}
\frac{c}{\tilde c} = \frac{\tilde f_1}{f_1},
\end{equation*}
where $f_1$ is $f$ evaluated at different value of $\abs{\xi'}/\tau$. Thus,
\begin{align*}
\frac{\tilde f}{f}=\frac{\tilde f_1}{f_1}\quad
\Leftrightarrow \quad
\frac{(\tilde c^{(+)}_S)^{-2} - b^2}{(c^{(+)}_S)^{-2} - b^2}
= \frac{(\tilde c^{(+)}_S)^{-2} - b_1^2}{(c^{(+)}_S)^{-2} - b_1^2},
\end{align*}
where we write $b = \abs{\xi'}/\tau$, $b_1 = \abs{\xi'_1}/\tau_1$ and observe that $c^{(-)}_S = \tilde c^{(-)}_S$ on $\Gamma$.

Cross multiplying we get the algebraic equation
\begin{equation*}
((\tilde c^{(+)}_S)^{-2}- ( c^{(+)}_S)^{-2})(b^2 - b_1^2) = 0.
\end{equation*}
Note that, as long as we pick $b_1 \neq \pm b$, we recover $c_S^{(+)} = \tilde{c}_S^{(+)}$.
Then going back to \eqref{key_3_1} one gets $c=\tilde{c}$, that is $\mu^{(+)} = \tilde \mu^{(+)}$ on $\Gamma$.
Finally, from $c_S^{(+)} = \tilde{c}_S^{(+)}$ and $\mu^{(+)} = \tilde \mu^{(+)}$ we obtain $\rho^{(+)} = \tilde{\rho}^{(+)}$.

The other entries of $R$ are can be used to recover the remaining parameter $c_P^{(+)}$ with the analogous argument.
\end{proof}

So far we have seen that from the knowledge of the $0$-th order parameters on $\Gamma$ and the parameters in $\Omega_{-}$ we can uniquely determine $c_P^{(+)}$, $c_S^{(+)}$ and $\rho^{(+)}$ on $\Gamma_{+}$.

\subsection{Recovery of the derivatives of the material parameters at the interface}
In this subsection we determine the $J$-th order derivatives of the material parameters at the interface $\Gamma$ from the $J$-th transmission conditions.
We first establish a relation between the $J$-th reflection asymptotic term on $\Gamma_{-}$ with the Neumann data of the $J$-th asymptotic of the transmitted waves on $\Gamma_{+}$. Then we study the relation between the lower order asymptotic terms of the transmitted rays and the higher order derivatives of the material parameters at $\Gamma$.
We observe that the calculations for the higher order derivatives of the parameters at the interface $\Gamma$ is similar to the calculations done in \cite[Section 3]{RachBoundary}. We try to use similar notations, wherever possible, to draw a relation between the two articles.

{\bf Notation:} We denote by $R_j$ the terms depending on
\begin{enumerate}
	\item[$\bullet$]
	normal derivatives of $c_P$, $c_S$, $\rho$ of order at most $j$, and
	\item[$\bullet$]
	quantities determined by the transmission conditions \eqref{e: elastic 0'th order trans conditions}, \eqref{e: elastic J'th order trans conditions} in $\Gamma$ for $J=0,-1, \dots,$ $1-j$.
\end{enumerate}

\begin{lemma}\label{lemma_4}
	If $(u_R)_j = (\tilde{u}_R)_{j}$, $(u_I)_j = (\tilde{u}_I)_{j}$, $c_{\PS} = \widetilde{c}_{\PS}$ and $\rho = \widetilde{\rho}$  on $\Omega_{-}$, for $j = -1,-2,\dots,$ $J$, then
	\begin{equation*}
	\left(\p_{x_3}u_{T}\right)_{J+1}
	= \left(\p_{x_3}\widetilde{u}_{T}\right)_{J+1},\quad\mbox{for }J\leq 0 	\quad \mbox{on }\Gamma_{+}.
	\end{equation*}
\end{lemma}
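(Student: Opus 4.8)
The plan is to carry the acoustic argument behind \eqref{e: transmission conditions} and Lemmas \ref{l: acoustic partial_x_3 a_0 formula}--\ref{l: acoustic higher derivatives derivation} through the vector-valued, $P/S$-split elastic parametrix. I would first write down the $J$-th order elastic transmission conditions, obtained by collecting the terms of homogeneity $J+1$ in the Neumann part of \eqref{elastic_transmission}, exactly as homogeneity $1$ was collected to obtain \eqref{e: elastic 0'th order trans conditions}. Using $B_\bullet(x,D_x)e^{i\phi_{\bullet,\PS}}a=e^{i\phi_{\bullet,\PS}}\big(B(\phi_{\bullet,\PS})a+B_\bullet(x,D_x)a\big)$, the common restriction $\phi_{\bullet,\PS}|_\Gamma=-t\tau+x'\cdot\xi'$ for $\bullet=I,R,T$, and the fact that along $\nu_T=-e_3$ the operator $B_T(x,D_x)$ contributes $-\mathrm{diag}\big(\mu^{(+)},\mu^{(+)},\lambda^{(+)}+2\mu^{(+)}\big)\p_{x_3}$, this yields on $\Gamma$ an identity of the form
\begin{equation*}
\mathrm{diag}\big(\mu^{(+)},\mu^{(+)},\lambda^{(+)}+2\mu^{(+)}\big)\sum_{\star=\PS}\p_{x_3}(a_{T,\star})_{J+1}\big|_\Gamma=\mathcal F,
\end{equation*}
where $\mathcal F$ is assembled from the frames $S_\bullet,\trac_\bullet$ and the phases $\phi_{\bullet,\PS}$ (hence the $\xi_{3,\bullet,\PS}$), the order-$J$ traces $(a_\bullet)_J|_\Gamma$, the normal derivatives $\p_{x_3}(a_I)_{J+1}|_\Gamma$ and $\p_{x_3}(a_R)_{J+1}|_\Gamma$, the tangential derivatives of $(a_T)_{J+1}|_\Gamma$, and the zeroth-order material parameters --- precisely the structure of the bottom row of \eqref{e: transmission conditions}, with the normal derivative of the transmitted amplitude isolated on the left.

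Next I would check that every ingredient of $\mathcal F$, and the coefficient matrix, is common to $\Op$ and $\widetilde\Op$. The parameters $\lambda^{(-)},\mu^{(-)},\rho^{(-)}$ agree by hypothesis, and $\lambda^{(+)},\mu^{(+)},\rho^{(+)}$ (hence $c_{\PS}^{(+)}$, hence the quantities $\xi_{3,\bullet,\PS}$, the frames $S_\bullet,\trac_\bullet$, and the mode projectors $\Pi_{\PS}$ at $\Gamma$) agree by Lemma \ref{lem_1}. The decisive structural point is that $u_I$ and $u_R$ are supported in $\overline\Omega_-$, a region where the two parameter sets coincide; therefore $(u_I)_j$ and $(u_R)_j$ agree as functions on the open set $\Omega_-$ for every $j$ in the hypothesis range $\{-1,\dots,J\}$, which contains $J+1$ as soon as $J\le-2$ (for $J=-1$ the missing order $J+1=0$ is the base case furnished by Lemma \ref{lem_1}, and for $J=0$ the statement reduces to Lemma \ref{lem_1} since $(a_{T,\star})_1=0$). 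Agreement on a one-sided neighbourhood of $\Gamma_-$ of amplitudes smooth up to $\Gamma$ forces all one-sided $x_3$-derivatives to match there, so $\p_{x_3}(a_I)_{J+1}|_\Gamma$ and $\p_{x_3}(a_R)_{J+1}|_\Gamma$ agree. Finally, the Dirichlet transmission conditions at orders $J$ and $J+1$ give $(u_T)_J|_\Gamma=(u_I)_J|_\Gamma+(u_R)_J|_\Gamma$ and the analogue one order higher, and --- just as \eqref{e: elastic 0'th order trans conditions} did for $(\mathcal A_T)_0$ --- the $J$-th and $(J+1)$-th order transmission systems recover the $P$ and $S$ components of $(a_T)_J|_\Gamma$ and $(a_T)_{J+1}|_\Gamma$ from data already matched. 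Hence $\mathcal F$ is common to the two operators.

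The coefficient $\mathrm{diag}(\mu^{(+)},\mu^{(+)},\lambda^{(+)}+2\mu^{(+)})$ is positive definite by strong convexity ($\mu^{(+)}>0$ and $\lambda^{(+)}+2\mu^{(+)}=\rho^{(+)}(c_P^{(+)})^2>0$) and, being built from $0$-th order parameters, common to both operators; inverting it gives $\sum_{\star}\p_{x_3}(a_{T,\star})_{J+1}|_\Gamma=\sum_{\star}\p_{x_3}(\widetilde a_{T,\star})_{J+1}|_\Gamma$, and separating $P$ and $S$ with the matching projectors yields $\p_{x_3}(a_{T,\star})_{J+1}|_\Gamma=\p_{x_3}(\widetilde a_{T,\star})_{J+1}|_\Gamma$ for $\star=\PS$. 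Since the order-$(J+1)$ part of $\p_{x_3}u_T$ on $\Gamma$ equals $\sum_{\star}\big(i(\p_{x_3}\phi_{T,\star})(a_{T,\star})_J+\p_{x_3}(a_{T,\star})_{J+1}\big)$, whose first summand already agrees, we conclude $(\p_{x_3}u_T)_{J+1}=(\p_{x_3}\widetilde u_T)_{J+1}$ on $\Gamma_+$. The main obstacle is the vector-valued algebra: organizing the $J$-th order transmission identity so that the (mode-separated) transmitted normal derivatives $\p_{x_3}(a_{T,\star})_{J+1}$ are visibly the only undetermined quantities in it requires careful tracking of how $B_\bullet(x,D_x)$, $\p_{x_3}$, and the mode projectors interact with the polarization frames $S_\bullet,\trac_\bullet$ and the splitting $a_T=a_{T,P}+a_{T,S}$ together with its co-kernel corrections $h_{\bullet,\PS}$; by comparison the index arithmetic guaranteeing that the hypothesis range reaches order $J+1$ is routine but should not be skipped.
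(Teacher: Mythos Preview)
Your proposal is correct and follows essentially the same route as the paper: write the order-$(J{+}1)$ Neumann transmission identity, use that the $I$ and $R$ parametrices live in $\overline\Omega_-$ where the parameters agree so their full Neumann data (including normal derivatives of the amplitudes) match, transfer this to $(\mathcal N_T u_T)_{J+1}$, and strip off the invertible $P^{(+)}=\mathrm{diag}(\mu^{(+)},\mu^{(+)},\lambda^{(+)}+2\mu^{(+)})$, common by Lemma~\ref{lem_1}. One small correction to your edge-case discussion: for $J=0$ the quantity $(\p_{x_3}u_T)_1=\sum_\star i\xi_{3,T,\star}(a_{T,\star})_0$ does not vanish merely because $(a_{T,\star})_1=0$; it agrees for the two operators because both $\xi_{3,T,\star}$ and $(a_{T,\star})_0$ are $R_0$ quantities already matched by Lemma~\ref{lem_1} and \eqref{e: elastic 0'th order trans conditions}.
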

\begin{proof}
 We recall the elastic transmission conditions (\ref{elastic_transmission}) and the boundary Neumann data as
\begin{align}\label{Neumann_data_J-th}
\mathcal{N}u_{\bullet} =& B(x,\phi_{\bullet,P})a_{\bullet,P} + B(x,\phi_{\bullet,S})a_{\bullet,S} + B(x,\nabla_x)\left(a_{\bullet,P}+a_{\bullet,S}\right) \\
=& \sum_{\star=P/S}B(x,\phi_{\bullet,\star})a_{\bullet,\star} + \bmat 0 &0 &\mu\p_{x_1}\\0 &0 &\mu\p_{x_2}\\\lambda\p_{x_1} &\lambda\p_{x_2} &0\\ \emat \sum_{\star=P/S} a_{\bullet,\star}
\\\nonumber
&\qquad \qquad \qquad \qquad \qquad + \bmat \mu &0 &0\\ 0&\mu&0\\ 0&0&(\lambda+2\mu)\emat \sum_{\star=P/S} \p_{x_3} a_{\bullet,\star}.
\end{align}
Observe that, $\mathcal{N}^{(-)}u_{I/R}$ is completely determined in $\Gamma_{-}$ from the knowledge of $\phi_{I/R,\PS}$, $c_{\PS}$, $\rho$ and $a_{I/R,\PS}$ on $\Omega_{-}$ except the term $\p_{x_3}a_{I/R,\PS}$. That is, one may write $\mathcal{N}^{(-)}u_{I/R} = R_0 + P^{(-)}\p_{x_3} u_{I/R}$, where $P^{(\pm)}$ is the diagonal matrix diag$(\mu^{(\pm)},\mu^{(\pm)},(\lambda^{(\pm)}+2\mu^{(\pm)}))$.

Note that $\p_{x_k}\phi_{\bullet,\PS} = \xi_k$, for $k=1,2$ and $\p_{x_3}\phi_{\bullet,\PS} = \xi_{3,\bullet,\PS}$ on $\Gamma$.
Therefore, the $J$-th order transmission conditions become
\beq
\begin{aligned}\label{e: elastic J'th order trans conditions}
(u_T)_{J} =& (u_I)_{J} + (u_R)_{J}\\
\left(\mathcal{N}_{T}(x,\xi)u_{T}\right)_{J+1}
=&\left(\mathcal{N}_{I}(x,\xi)u_{I}\right)_{J+1} + \left(\mathcal{N}_{R}(x,\xi)u_{R}\right)_{J+1}
\end{aligned}
\eeq
Now if $(u_I)_J = (\widetilde{u}_I)_J$ and $(u_R)_J = (\widetilde{u}_R)_J$ on $\Gamma$ for $J \leq -1 $, then
\begin{equation}
(u_T)_J = (u_I)_J + (u_R)_J = (\widetilde{u}_I)_J + (\widetilde{u}_R)_J = (\widetilde{u}_T)_J, \qquad \mbox{on }\Gamma.
\end{equation}
Moreover, $\Pi_{\PS}(u_{\bullet})_J = \Pi_{\PS}(\tilde{u}_{\bullet})_J$ on $\Gamma$ since $\Pi_{\PS}$ is a $R_0$ quantity (see \eqref{Symbol_mode-projector}).
Therefore, from \eqref{Neumann_data_J-th} along with the fact that $(u_{I/R})_{J+1}=(u_{I/R})_{J+1}$ on $\Gamma$ we obtain
\begin{equation*}
\left(\mathcal{N}_Iu_{I}\right)_{J+1} = \left(\widetilde{\mathcal{N}}_I \widetilde{u}_{I}\right)_{J+1}
\qquad \mbox{and}\quad
\left(\mathcal{N}_Ru_{R}\right)_{J+1} = \left(\widetilde{\mathcal{N}}_R \widetilde{u}_{R}\right)_{J+1}
\qquad \mbox{on }\Gamma.
\end{equation*}
Now, let $\mathcal{N}$ be the Neumann derivative for the parameters $\widetilde{\lambda}, \widetilde{\mu}, \widetilde{\rho}$ on $\Gamma$.
We readily obtain
\begin{multline}
\left(\mathcal{N}_{T} u_{T}\right)_{J+1}
=\left(\mathcal{N}_{I} u_{I}\right)_{J+1}
+ \left(\mathcal{N}_{R} u_{R}\right)_{J+1}
= \left(\widetilde{\mathcal{N}}_{I} \widetilde{u}_{I}\right)_{J+1} + \left(\widetilde{\mathcal{N}}_{R} \widetilde{u}_{R}\right)_{J+1}
\\= \left(\widetilde{\mathcal{N}}_{T} \widetilde{u}_{T}\right)_{J+1},
\qquad \mbox{on }\Gamma.
\end{multline}
Note that, from $\Pi_{\PS}(u_T)_{J+1}=\Pi_{\PS}(\widetilde{u}_T)_{J+1}$ on $\Gamma$ and \eqref{Neumann_data_J-th} we see
\begin{multline*}
0 = \left(\mathcal{N}_{T} u_T\right)_{J+1} - \left(\widetilde{\mathcal{N}}_{T} \widetilde{u}_T\right)_{J+1}
= P^{(+)}\left(\p_{x_3}u_{T}\right)_{J} - \widetilde{P}^{(+)}\left(\p_{x_3}\widetilde{u}_{T}\right)_{J}
\\= P^{(+)}\left[\left(\p_{x_3}u_{T}\right)_{J} - \left(\p_{x_3}\widetilde{u}_{T}\right)_{J}\right],
\end{multline*}
on $\Gamma$. The last identity holds due to the fact that Lemma \ref{lem_1} asserts $P^{(+)} = \widetilde{P}^{(+)}$ on $\Gamma$.
Therefore, we essentially obtain $\left(\p_{x_3}\widetilde{u}_{T}\right)_{J} = \left(\p_{x_3}u_{T}\right)_{J}$ on $\Gamma$ since $P^{(+)}(x)$ is invertible.
\end{proof}

\begin{rem}
A similar lemma can be proved if the transmitted wave fields $(u_T)_J$ are known instead of $(u_R)_J$ on $\Gamma$. That is, if we know the elastic parameters on $\Omega_{-}$, $(u_I)_J$ on $\Gamma_{-}$ and $(u_T)_J$ on $\Gamma_{+}$, then one can determine the reflected wave fields $(\p_{x_3}u_R)_J$ on $\Gamma_{-}$.
\end{rem}

In the rest of the section we will show that knowing $\left(\p_{x_3}u_T\right)_J$ on $\Gamma_{+}$ implies knowing $\p_{x_3}^{\abs{J}+1}c_{P/S}^{(+)}$ and $\p_{x_3}^{\abs{J}+1}\rho^{(+)}$ on $\Gamma_{+}$.
We start with the following computation whose proof follows from \cite[Proposition 3.1]{RachBoundary}. One can describe $\left(\p_{x_3}u_{\bullet}\right)_J\restriction_{\Gamma_{+}}$ as

\begin{multline}\label{eq3.1}
\left(\p_{x_3}u_{\bullet}\right)_J
= i(\gamma_{2,\bullet})_{J-1} \left[ \xi_{3,\bullet,P}\left(M_2 + \frac{\abs{\xi'}^2(\xi_{3, \bullet,S} 
- \xi_{3,\bullet,P})}{\abs{\xi'}^2 + \xi_{3, \bullet,P}\xi_{3, \bullet,S}} \xi_{\bullet,P}\right)\right.
\\\left.
- \xi_{3,\bullet,S}\left(\frac{\abs{\xi'}\,\abs{\xi_{\bullet,P}}^2 \,\abs{\xi_{\bullet,S}}}{\abs{\xi'}^2 + \xi_{3, \bullet,P}\xi_{3, \bullet,S}} N_2\right)\right]\\
\qquad - (\gamma_{\bullet})_{J-1} \left[ \xi_{3,\bullet,P}\left(\frac{\abs{\xi_{\bullet,S}}^2}{\abs{\xi'}^2 + \xi_{3, \bullet,P}\xi_{3, \bullet,S}} \xi_{\bullet,P}\right)
\right.\\
\left.-\xi_{3,\bullet,S}\left(\xi_{\bullet,S} + \frac{\abs{\xi'}\,\abs{\xi_{\bullet,S}}(\xi_{3,\bullet,S} - \xi_{3,\bullet,P})}{\abs{\xi'}^2 + \xi_{3, \bullet,P}\xi_{3, \bullet,S}} N_2\right)\right]\\
\vspace{2 mm} \qquad + i(\gamma_{1,\bullet})_{J-1}\left[ \left(\xi_{3,\bullet,P}-\xi_{3,\bullet,S}\right) M_1 \right]
+ \left[\p_{x_3}\left(\gamma_{\bullet}\right)_J\right]M
+ \left[\p_{x_3}\left(\gamma_{1,\bullet}\right)_J\right]M_1\\
\qquad +\left[\p_{x_3}\left(\gamma_{2,\bullet}\right)_J\right]M_2
 + \left[\p_{x_3}\left(\alpha_{\bullet}\right)_J\right]N
+ \left[\p_{x_3}\left(\alpha_{1,\bullet}\right)_J\right]N_1 \\
\qquad + \left[\p_{x_3}\left(\alpha_{2,\bullet}\right)_J\right]N_2
+ \left(\gamma_{\bullet}\right)_J\left[\p_{x_3}M\right]
+ \left(\gamma_{1,\bullet}\right)_J\left[\p_{x_3}M_1\right]\\
\qquad + \left(\gamma_{2,\bullet}\right)_J\left[\p_{x_3}M_2\right]
+ \left(\alpha_{\bullet}\right)_J\left[\p_{x_3}N\right]
+ \left(\alpha_{1,\bullet}\right)_J\left[\p_{x_3}N_1\right]
+ \left(\alpha_{2,\bullet}\right)_J\left[\p_{x_3}N_2\right].
\end{multline}

Before going into the full general case for recovering $|J|$-th order derivatives of the elastic parameters on the interface, we consider the case for $|J|=1$ in the following proposition.
\begin{prop}\label{prop 5}
The terms $\p_{x_3}c_P^{(+)}$, $\p_{x_3}c_S^{(+)}$ and $\p_{x_3}\rho^{(+)}$ are uniquely determined on $\Gamma_{+}$ from the knowledge of $(u_R)_0$ and $(u_R)_{-1}$ on $\Gamma_{-}$.
\end{prop}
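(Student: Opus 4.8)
The plan is to argue uniqueness, following the acoustic scheme (Lemmas \ref{l: acoustic partial_x_3 a_0 formula}--\ref{l: first derivative}) while carrying along the $P/S$ mode decomposition: assume two parameter sets agree, together with $c_{\PS}$ and $\rho$, near $\Gamma$ in $\overline\Omega_-$ and produce the same $(u_R)_0$ and $(u_R)_{-1}$ on $\Gamma_-$, and show that their first normal derivatives agree on $\Gamma_+$. First I would invoke Lemma \ref{lem_1}: $(u_R)_0$ together with the parameters on $\Omega_-$ already pins down $\rho^{(+)},c_P^{(+)},c_S^{(+)}$ on $\Gamma$, hence every ``$R_0$'' quantity --- the phases to first order at $\Gamma$, the frames $N,N_1,N_2,M,M_1,M_2$, the mode projectors $\Pi_{\PS}$, the zeroth order amplitudes $(a_{\bullet,\PS})_0$, and the transmission matrices in \eqref{e: elastic 0'th order trans conditions}. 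Since the incident field $u_I$ is generated from the same boundary source and propagates through $\Omega_-$ with identical parameters, $(u_I)_j$ coincides for the two problems for every $j$; combined with $(u_R)_{-1}$ and the order $-1$ relation $(u_T)_{-1}=(u_I)_{-1}+(u_R)_{-1}$ from \eqref{e: elastic J'th order trans conditions}, the trace $(u_T)_{-1}$ on $\Gamma$ agrees, and decomposing $\Pi_{\PS}(u_T)_{-1}=(a_{T,\PS})_{-1}$ in the corresponding frames recovers $(\gamma_{1,T})_{-1},(\gamma_{2,T})_{-1},(\gamma_T)_{-1}$ and $(\alpha_{0,T})_0,(\alpha_{1,T})_0,(\alpha_{2,T})_0$.

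Next I would apply Lemma \ref{lemma_4} with $J=-1$, whose hypotheses are now in force, to conclude that $(\p_{x_3}u_T)_0$ agrees on $\Gamma_+$. Expanding this vector through \eqref{eq3.1} with $\bullet=T$ and $J=0$, and using $(h_{T,\PS})_0=0$ so that every $(\gamma_{\cdot,T})_0$-term drops out, the only unknowns remaining on the right-hand side are $\p_{x_3}(\alpha_{0,T})_0,\p_{x_3}(\alpha_{1,T})_0,\p_{x_3}(\alpha_{2,T})_0$ and the frame derivatives $\p_{x_3}N,\p_{x_3}N_1,\p_{x_3}N_2$ (the vectors $M,M_1,M_2$ enter only through the already known coefficients $(\gamma_{\cdot,T})_{-1}$). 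Differentiating the eikonal equations \eqref{elastic_eikonal_equation} in $x_3$ and restricting to $\Gamma$ shows $\p_{x_3}N$ is linear in $\p_{x_3}c_P^{(+)}$ while $\p_{x_3}N_1,\p_{x_3}N_2$ are linear in $\p_{x_3}c_S^{(+)}$, modulo $R_0$ terms (the tangential derivatives of $c_{\PS}^{(+)}$ are already known).

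The remaining ingredient is the elastic analogue of Lemma \ref{l: acoustic partial_x_3 a_0 formula}: inserting the frame decomposition \eqref{eq_1} into the order $0$ transport and compatibility relations \eqref{eq_transport}--\eqref{eq_compatibility} and restricting to $\Gamma$, the $\partial_{\tau,\xi}p\cdot\partial_{t,x}$ piece of $\mathcal B_{T,\PS}$ produces $\p_{x_3}(a_{T,\PS})_0$, which is then balanced against tangential derivatives (all $R_0$) and against $p_1$, the latter carrying $\nabla_x\lambda,\nabla_x\mu$, i.e. the first normal derivatives of the parameters at $\Gamma$ together with known tangential data; schematically this yields
\begin{equation*}
\p_{x_3}(a_{T,\PS})_0=\mathcal M_{\PS}\bigl(\p_{x_3}c_P^{(+)},\p_{x_3}c_S^{(+)},\p_{x_3}\rho^{(+)}\bigr)+R_0,
\end{equation*}
with $\mathcal M_{\PS}$ linear and explicit, so $\p_{x_3}(\alpha_{i,T})_0$ becomes linear in the triple of first normal derivatives plus known terms. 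Substituting everything into \eqref{eq3.1} collapses it to a $3\times3$ linear system
\begin{equation*}
(\p_{x_3}u_T)_0-(\text{known }R_0\text{ vector})=\mathcal G(x',\tau,\xi')\col{\p_{x_3}c_P^{(+)}\\ \p_{x_3}c_S^{(+)}\\ \p_{x_3}\rho^{(+)}},
\end{equation*}
one equation per component of $\R^3$, with $\mathcal G$ built from the frame vectors and zeroth order parameters. To finish, I would check $\mathcal G$ is invertible at a suitable non-glancing covector --- and, should a single covector prove degenerate, combine the identity at two non-proportional non-glancing covectors, as in the acoustic Lemmas \ref{l: recover 2 params from 0'th order reflect} and \ref{l: first derivative} --- which forces $\p_{x_3}c_P^{(+)},\p_{x_3}c_S^{(+)},\p_{x_3}\rho^{(+)}$ to agree on $\Gamma_+$.

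The hard part will be this last nondegeneracy step together with the transport-equation bookkeeping of the previous paragraph: one must verify that, after eliminating the auxiliary unknowns $\p_{x_3}(\alpha_{\cdot,T})_0$, the three physical derivatives genuinely decouple and $\mathcal G$ does not drop rank. Everything else is a lengthy but routine matrix computation modeled on \cite[Section 3]{RachBoundary}.
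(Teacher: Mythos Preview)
Your overall strategy is correct and matches the paper: invoke Lemma~\ref{lem_1} to fix all $R_0$ quantities, use Lemma~\ref{lemma_4} to conclude $(\p_{x_3}u_T)_0$ agrees on $\Gamma_+$, expand this via \eqref{eq3.1}, and feed in the transport relations to express everything linearly in $\p_{x_3}c_P^{(+)},\p_{x_3}c_S^{(+)},\p_{x_3}\rho^{(+)}$.

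Where you diverge is in the final linear-algebra step. You propose assembling the three Euclidean components of $(\p_{x_3}u_T)_0$ into a single $3\times3$ system $\mathcal G$ and checking nondegeneracy, possibly at two covectors. The paper instead projects $(\p_{x_3}u_T)_0$ onto the frame directions $M_1$ and $M_2$ separately, and this is not merely cosmetic: the $M_1$-projection (equation \eqref{key_e_5}, taken from \cite[Equation (64)]{RachBoundary}) has the feature that $\p_{x_3}c_P^{(+)}$ \emph{drops out entirely}, leaving a relation of acoustic type between $\p_{x_3}\rho^{(+)}$ and $\p_{x_3}c_S^{(+)}$ alone. Varying $\abs{\xi'}/\tau$ then recovers both exactly as in Lemma~\ref{l: first derivative}. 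Only afterwards does the paper pass to the $M_2$-projection \eqref{key_e_8}, in which $\p_{x_3}c_P^{(+)}$ now sits by itself (the other two derivatives having just been determined), and a single covector suffices because the coefficient $i\frac{\xi_{3,T,P}-\xi_{3,T,S}}{2\xi_{3,T,P}^2}+\frac{\abs{\xi'}}{\xi_{3,T,P}}$ is manifestly nonzero. So the paper never confronts a $3\times3$ determinant: the $M_1/M_2$ decoupling reduces the problem to a $2\times2$ acoustic-style step followed by a scalar step. Your route would work in principle, but the ``hard part'' you flag --- the rank of $\mathcal G$ --- is precisely what the paper sidesteps by choosing adapted frame projections rather than raw components.
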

\begin{proof}
We start with the following relation, obtained from a similar calculation done in \cite[Equation (64)]{RachBoundary}, given as
\begin{equation}\label{key_e_5}
	\frac{M_1}{\abs{M_1}^2}\cdot \p_{x_3}(u_{T})_0
	= -\left(\p_{x_3}\log\sqrt{\rho^{(+)}} + \frac{1}{(2c_s^{(+)})^2 f_S}\p_{x_3}\log c_S^{(+)}\right)(\alpha_{1,T})_0 + R_0,
\end{equation}
where \[f_S = f(\abs{\xi'}/\tau) = \left(\frac{1}{(c_S^{(+)})^2} - \frac{\abs{\xi'}}{\tau}\right)\] is a $R_0$ quantity.
Since, $M_1$ is a $R_0$ quantity, hence, from Lemma \ref{lemma_4} for $J=0$ and \eqref{key_e_5} we get
\begin{equation}\label{key_e_6}
	\p_{x_3}\log\sqrt{\frac{\rho^{(+)}}{\tilde{\rho}^{(+)}}}
	= \frac{1}{2(c_s^{(+)})^2 f_S}\left(\p_{x_3}\log \tilde{c}_S^{(+)} - \p_{x_3}\log c_S^{(+)}\right), \qquad\mbox{on }\Gamma_{+}.
\end{equation}
Similar to the case of acoustic waves, we fix $x$ and consider two different values of $(\abs{\xi'}/\tau)$ to have two different quantities $f_S$ and $f_S^{(1)}$.
Thus, we obtain
\begin{equation*}
	\left(f_S - f^{(1)}_S \right)\left(\p_{x_3}\log \tilde{c}_S^{(+)} - \p_{x_3}\log c_S^{(+)}\right) = 0
	\quad\mbox{on }\Gamma.
\end{equation*}
Hence, $\p_{x_3}\log \tilde{c}_S^{(+)} = \p_{x_3}\log c_S^{(+)}$ on $\Gamma_{+}$. From \eqref{key_e_6} we get $\p_{x_3}\log \sqrt{\tilde{\rho}^{(+)}}\restriction_{\Gamma_{+}}$ $= \p_{x_3}\log \sqrt{\rho^{(+)}}\restriction_{\Gamma_{+}}$.
	
Now, to recover $\p_{x_3}c_P^{(+)}\restriction_{\Gamma_+}$, we observe that a similar calculation as in \cite[Proposition 3.8]{RachBoundary} gives us
\beq
\begin{aligned}\label{key_e_8}
	\frac{M_2}{\abs{M_2}}\cdot \p_{x_3}(&u_{T})_0
	\\=&  \left((\p_{x_3}\log c_P^{(+)})\left[ i\frac{(\xi_{3,T,P} - \xi_{3,T,S})} {2\xi_{3,T,P}^2} + \frac{\abs{\xi'}}{\xi_{3,T,P}}\right]\right.
	\\
&-(\p_{x_3}\log c_S^{(+)}) \left[i\frac{4 (c_S^{(+)})^2(\xi_{3,T,P} - \xi_{3,T,S})\abs{\xi'}}{\abs{\xi_{T,P}}^2}\right] \\
	&\left. - \left(\p_{x_3}\log \sqrt{\rho^{(+)}}\right) \left[ i\left(1- \frac{2(c_S^{(+)})^2}{(c_{\lambda+\mu}^{(+)})^2}\right)\frac{(\xi_{3,T,P} - \xi_{3,T,S})\abs{\xi'}}{\abs{\xi_{T,P}}^2}\right] \right)(\alpha_{1,T})_0 \\
	&\qquad  + R_0,
	\end{aligned}
\eeq
where $c_{\lambda+\mu}^{(\pm)}:= \sqrt{(c_P^{(\pm)})^2 - (c_S^{(\pm)})^2} = \sqrt{\frac{\lambda^{(+)} + \mu^{(+)}}{\rho^{(+)}}}$.
Since \[\p_{x_3}\log \tilde{c}_S^{(+)}\restriction_{\Gamma_{+}} = \p_{x_3}\log c_S^{(+)}\restriction_{\Gamma_{+}} ,\qquad  \p_{x_3}\log\sqrt{\tilde{\rho}^{(+)}}\restriction_{\Gamma_{+}}=\p_{x_3}\log\sqrt{\rho^{(+)}}\restriction_{\Gamma_{+}},\] then $(a_R)_{-1} = (\tilde{a}_R)_{-1}$ on $\Gamma_{-}$ implies
\begin{multline*}
	(\p_{x_3}\log c_P^{(+)})\left[ i\frac{(\xi_{3,T,P} - \xi_{3,T,S})} {2\xi_{3,T,P}^2} + \frac{\abs{\xi'}}{\xi_{3,T,P}}\right]\\
	= (\p_{x_3}\log \tilde{c}_P^{(+)})\left[ i\frac{(\xi_{3,T,P} - \xi_{3,T,S})} {2\xi_{3,T,P}^2} + \frac{\abs{\xi'}}{\xi_{3,T,P}}\right] 
	\quad \mbox{on }\Gamma,\\
	\text{which implies } \quad
	\p_{x_3} c_P^{(+)}\restriction_{\Gamma_{+}} =\ \p_{x_3} \tilde{c}_P^{(+)}\restriction_{\Gamma_{+}}  \quad\quad
	\left[\because \frac{(\xi_{3,T,P} - \xi_{3,T,S})} {2\xi_{3,T,P}^2} \neq i\frac{\abs{\xi'}}{\xi_{3,T,P}}\right].
\end{multline*}
\end{proof}

Next we define the quantities
\begin{align*}
A_P =& \bmat 1 &-(c_P/\tau)^{3}\\ \\ 0 &-2c_p^2\xi_{3,\bullet,P} \emat,
\quad C_P = \bmat(c_P/\tau)^{2}\left(\frac{c_S^2}{c_{\lambda+\mu}^2}+\frac{c_P^2 \abs{\xi'}^2}{\tau^2}\right) &0\\ \\ -c_{\lambda+\mu}^2\frac{c_P}{\tau}\abs{\xi'}^2\xi_{3,\bullet,P} &0 \emat,\\
B_P =& \bmat \frac{2ic_S^2c_P^2\xi_{3,\bullet,P}}{c_{\lambda+\mu}^2\tau^2} &\frac{c_P^5\xi_{3,\bullet,P}}{\tau^5}\\ \\ -c_{\lambda+\mu}^2\abs{\xi'}^2\frac{\tau}{c_p} &c_{\lambda+\mu}^2\frac{c_P\xi_{3,\bullet,P}}{\tau}+c_S^2 \emat,\quad
\\
&
D_P^{J+1} = \col{-(\gamma_{2,\bullet})_{J+1}\frac{c_P^2 c_S^2}{c_{\lambda+\mu}^2\tau^2} - \frac{(\alpha_{\bullet})_{J+1}c_{P}^3 \left(\frac{c_{\lambda+\mu}^2c_P^2\abs{\xi'}^2}{\tau^2} + c_S^2\right)}{c_{\lambda+\mu}^2\tau^3\xi_{3,\bullet,P}}\\ \\
	(\gamma_{2,\bullet})_{J+1}c_S^2 \frac{\tau \abs{\xi'}^2}{c_P\xi_{3,\bullet,P}}- (\alpha_{\bullet})_{J+1}c_{\lambda+\mu}^2\frac{c_P^2\abs{\xi'}^2}{\tau^2}}.
\end{align*}
Observe that $A_P$, $B_P$, $C_P$ are $R_0$ terms and $D_{P}^{J+1}$ are $R_{\abs{J}-1}$ terms, for $J = 0,-1,-2,\dots$.
We have the following recursive relation from \cite[Theorem 3.7]{RachBoundary} as
\begin{prop}\label{Prop_3.7}
For P waves, we have the following recurrence relation for $(\gamma_{2,\bullet})_{J-1}$ and \newline $\p_{x_3}(\alpha_{\bullet})_{J}$ as
\begin{multline*}
A_P\col{(\gamma_{2,\bullet})_{J-1}\\\p_{x_3}(\alpha_{\bullet})_{J}}
= B_P\p_{x_3}\col{(\gamma_{2,\bullet})_{J}\\\p_{x_3}(\alpha_{\bullet})_{J+1}} + C_P\p_{x_3}^2\col{(\gamma_{2,\bullet})_{J+1}\\\p_{x_3}(\alpha_{\bullet})_{J+2}}
\\
+ D_P^{J+1}\left(\p_{x_3}^2\log c_{\bullet,P}\right) + R_{\abs{J}+1},
\end{multline*}
for  $J\leq-1$.
\end{prop}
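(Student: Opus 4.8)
The plan is to read the recurrence off the $P$-mode transport equation \eqref{eq_transport} written at homogeneity level $J-1$, after inserting the structural ansatz \eqref{eq_1} for the transmitted amplitude and splitting that ansatz into an out-of-plane part (along $M_1$) and a sagittal part (along $M_2$ and along $N$, both lying in the plane spanned by $(\xi',0)$ and $e_3$). This is the elastic counterpart of the scalar computation in \cite[Theorem 3.7]{RachBoundary}, and I would carry it out by substituting for the scalar symbols there our matrix symbol $p(t,x,\tau,\xi)=(-\rho\tau^2+\mu\abs{\xi}^2)\Id+(\lambda+\mu)(\xi\otimes\xi)$, its subprincipal part $p_1$, and the operators $\mathcal B_{\bullet,P}$, $\mathcal C_{\bullet,P}$ that appear in \eqref{eq_transport}.

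First I would apply $p(t,x,\p_t\phi_{\bullet,P},\nabla_x\phi_{\bullet,P})$ to $(a_{\bullet,P})_{J-1}=(h_{\bullet,P})_{J-1}+(\alpha_\bullet)_{J-1}N$. The eikonal equation makes $N$ span the kernel of this matrix, so only $(h_{\bullet,P})_{J-1}=(\gamma_{1,\bullet})_{J-1}M_1+(\gamma_{2,\bullet})_{J-1}M_2$ survives, and a short computation shows $pM_1=\kappa M_1$, $pM_2=\kappa M_2$ with $\kappa$ an explicit nonvanishing $R_0$ scalar; hence the out-of-plane coefficient $(\gamma_{1,\bullet})_{J-1}$ decouples, and the sagittal projection of the transport equation becomes a $2\times2$ system in the two genuinely new unknowns $(\gamma_{2,\bullet})_{J-1}$ and $\p_{x_3}(\alpha_\bullet)_J$. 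The derivative $\p_{x_3}(\alpha_\bullet)_J$ enters from the right: $\mathcal B_{\bullet,P}(a_{\bullet,P})_J$ contains $\p_{\tau,\xi}p\cdot\p_{t,x}(a_{\bullet,P})_J$, whose normal ($x_3$-) part, once restricted to the flat interface $\Gamma=\{x_3=0\}$ and stripped of its tangential derivatives exactly as the bicharacteristic derivative is rewritten in \eqref{key_5}--\eqref{e: Hamilton derivative to normal deriv} in the acoustic case, produces $\p_{x_3}$ of $(h_{\bullet,P})_J+(\alpha_\bullet)_J N$, i.e. $\p_{x_3}(\alpha_\bullet)_J$ together with $\p_{x_3}(\gamma_{i,\bullet})_J$. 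Collecting the coefficients of these two quantities on the left yields the matrix $A_P$, which is invertible off the glancing set because $\det A_P=-2c_P^2\,\xi_{3,\bullet,P}\neq0$, so the resulting identity genuinely expresses the level-$(J-1)$ data through the already-determined level-$J$, $(J+1)$, $(J+2)$ data and one new normal derivative of $c_P$.

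The remaining work is to expand the right-hand side term by term. The first-amplitude-derivative part of $\mathcal B_{\bullet,P}$, taken one level higher, supplies $B_P\p_{x_3}\col{(\gamma_{2,\bullet})_J\\\p_{x_3}(\alpha_\bullet)_{J+1}}$, and the second-derivative part of $\mathcal C_{\bullet,P}(a_{\bullet,P})_{J+1}$ supplies $C_P\p_{x_3}^2\col{(\gamma_{2,\bullet})_{J+1}\\\p_{x_3}(\alpha_\bullet)_{J+2}}$; the coefficients $B_P$, $C_P$ are read off from $\p_{\tau,\xi}p$ and $\p^2_{\tau,\xi}p$ and are $R_0$. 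The one place a genuinely new second normal derivative of the $P$-speed shows up is the Hessian term $\sum_{\abs{\alpha}=2}\p^\alpha_{\tau,\xi}p\,(\p^\alpha_{t,x}\phi_{\bullet,P})$ in $\mathcal B_{\bullet,P}$ (and its analogue in $\mathcal C_{\bullet,P}$): since $\p_{x_3}\phi_{\bullet,P}=\xi_{3,\bullet,P}$ and $\p_{x_3}\xi_{3,\bullet,P}$ is an $R_0$-multiple of $\p_{x_3}\log c_P$, advancing the recursion one step further differentiates this once more and produces $\p_{x_3}^2\log c_{\bullet,P}$ multiplied by amplitudes of order $\le\abs{J}-1$; collecting that contribution is exactly $D_P^{J+1}(\p_{x_3}^2\log c_{\bullet,P})$, and one reads off from the explicit form of $p$ that its coefficient matches the stated $D_P^{J+1}$, which is $R_{\abs{J}-1}$. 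Everything else --- tangential derivatives of the level-$J$ amplitudes on $\Gamma$, the subprincipal contribution $p_1(a_{\bullet,P})_J$, the frame-derivative terms $\p_{x_3}M_i$, $\p_{x_3}N$, and the out-of-plane and shear-mode pieces --- is expressible through at most $\abs{J}+1$ normal derivatives of $c_P,c_S,\rho$ and the transmission data \eqref{e: elastic 0'th order trans conditions}, \eqref{e: elastic J'th order trans conditions} at orders $0,-1,\dots,J$, hence is absorbed into $R_{\abs{J}+1}$.

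The main obstacle is bookkeeping rather than any new idea. One must check that the sagittal projection genuinely decouples the $(\gamma_{2,\bullet},\alpha_\bullet)$ system from the $(\gamma_{1,\bullet})$ direction and from the shear contributions buried in $\mathcal B_{\bullet,P}$, $\mathcal C_{\bullet,P}$, and that each of the ``leftover'' terms above really is controlled by $\le\abs{J}+1$ normal derivatives of the parameters plus the previously determined transmission data, so that it may legitimately be swept into $R_{\abs{J}+1}$. This is precisely the content of \cite[Theorem 3.7]{RachBoundary} transplanted to the operator $\Op$; I would follow that computation line by line while additionally tracking the powers of $\tau$, $c_P$ and $\xi_{3,\bullet,P}$ that distinguish the elastic symbols from the scalar ones and that assemble into $A_P$, $B_P$, $C_P$, $D_P^{J+1}$.
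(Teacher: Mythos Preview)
Your proposal is correct and matches the paper's approach: the paper does not give an independent proof of this proposition but simply records it as ``the following recursive relation from \cite[Theorem 3.7]{RachBoundary},'' and your plan is precisely to transplant that computation to the elastic operator $\Op$ while tracking the explicit coefficient matrices $A_P,B_P,C_P,D_P^{J+1}$. The only thing to add is that your sketch is in fact more detailed than what the paper itself supplies.
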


Using the recurrence relation in Proposition \ref{Prop_3.7} we state the following lemma.
\begin{lemma}\label{Lem_3.12}
	$(\gamma_{2,\bullet})_{J-1}$ and $\p_{x_3}(\alpha_{\bullet})_J$ can be written in terms of $\p_{x_3}^{1+\abs{J}} \log c_P, \p_{x_3}^{1+\abs{J}} \log c_s$ and
	$\p_\nu^{1+\abs{J}} \log \sqrt \rho$.
	In fact,
	\begin{multline*}
	\col{ (\gamma_{2,\bullet})_{J-1}\\ \p_{x_3}(\alpha_{\bullet})_J}
	= (I \ 0) \cdot \mathcal M_J \cdot \mathcal M
	\cdot \p_{x_3}^{1+\abs{J}} \col{ \log c_P \\ \log c_S \\ \log \sqrt \rho}
	(\alpha_{\bullet})_0 + R_{\abs{J}}, \\
	\qquad	\mbox{for } J=-1, -2,\dots,
	\end{multline*}
	where
	\begin{align*}
	\mathcal{M}
	=& \col{ A_p^{-1}B_p \\ I}\mathcal M_{\gamma_2,\alpha} + \col{I\\0}
	\bmat \bmat A_p^{-1}D_p^0 \emat & \bmat 0&0\\0&0 \emat \emat,
	\qquad I = \bmat 1 &0\\ 0 &1 \emat,\\
	\mathcal{M}_J
	=& \col{ A_p^{-1}B_p & A_p^{-1}C_p\\ I &0}^{\abs{J}-1},
\end{align*}
in which,
\begin{align*}
	&\col{ (\gamma_2)_{-1}\\ \p_\nu(\alpha)_0}
	=\mathcal{M}_{\gamma_2,\alpha}\cdot \p_{x_3}\col{ \log c_P \\ \log c_S \\ \log \sqrt \rho}(\alpha_{\bullet})_0,\\
	\mathcal{M}_{\gamma_2,\alpha} =& \bmat -\frac{c_{P,\bullet}^2}{2\tau^2\xi_{3,\bullet,P}^2} &\frac{4ic_{P,\bullet}^3c_{S,\bullet}^2}{\tau^3c_{\lambda+\mu,\bullet}^2} &i\left(1-\frac{2c_{S,\bullet}^2}{c_{\lambda+\mu,\bullet}^2}\right)\frac{c_{p,\bullet}^3}{\tau^3}\\
		-\frac{1}{2}\left(1- \frac{\abs{\xi'}^2}{\xi_{3,\bullet,P}^2}\right) &0 &-1 \emat.
\end{align*}
	
\end{lemma}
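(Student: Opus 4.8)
The plan is to prove the asserted identity by induction on $\abs{J}=1,2,3,\dots$ (equivalently, by downward induction on $J$), with Proposition \ref{Prop_3.7} supplying the inductive step and Proposition \ref{prop 5} the base case. The first preparatory observation is that $A_P$ is lower triangular with diagonal entries $1$ and $-2c_P^2\xi_{3,\bullet,P}$; away from the glancing set $\xi_{3,\bullet,P}\neq 0$, so $A_P$ is invertible and $A_P^{-1}$ is a well-defined $R_0$ bundle endomorphism. Multiplying the recurrence of Proposition \ref{Prop_3.7} on the left by $A_P^{-1}$ therefore expresses $\col{(\gamma_{2,\bullet})_{J-1}\\\p_{x_3}(\alpha_{\bullet})_J}$ as $A_P^{-1}B_P$ applied to $\p_{x_3}\col{(\gamma_{2,\bullet})_{J}\\\p_{x_3}(\alpha_{\bullet})_{J+1}}$, plus $A_P^{-1}C_P$ applied to $\p_{x_3}^2\col{(\gamma_{2,\bullet})_{J+1}\\\p_{x_3}(\alpha_{\bullet})_{J+2}}$, plus the driving term $A_P^{-1}D_P^{J+1}\,\p_{x_3}^2\log c_{\bullet,P}$, modulo terms one checks land in the class $R_{\abs{J}}$.

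For the base case $J=-1$ I would read off the needed identity directly from the explicit formula \eqref{eq3.1} for $\p_{x_3}(u_\bullet)_J$ evaluated at $J=0$: pairing it with the dual frame element $M_2/\abs{M_2}$ and using the simplifications already recorded in \eqref{key_e_8} (and in \eqref{key_e_5} for the $\log\sqrt\rho$ and $\log c_S$ entries) yields one row of $\mathcal{M}_{\gamma_2,\alpha}$, while pairing with the direction dual to $(\gamma_{2,\bullet})_{-1}$ — noting that $\p_{x_3}$ of the amplitudes $(\gamma_\bullet)_0,(\alpha_{i,\bullet})_0$ and of the frame vectors $M,M_1,N,N_1,N_2$ contribute only $R_0$ terms — yields the other row. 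This fixes the $J=-1$ instance of the formula (with the power of the block matrix trivial and the $A_P^{-1}D_P^0$ block not yet activated), and is essentially the computation behind Proposition \ref{prop 5}.

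For the inductive step I would substitute the induction hypotheses for the indices $J+1$ and $J+2$ into the rewritten recurrence and differentiate. The key bookkeeping observation is that if a quantity has the form $K\,\p_{x_3}^{k}\col{\log c_P\\\log c_S\\\log\sqrt\rho}(\alpha_\bullet)_0 + R_k$ with $K$ an $R_0$ endomorphism, then $\p_{x_3}$ of it equals $K\,\p_{x_3}^{k+1}\col{\log c_P\\\log c_S\\\log\sqrt\rho}(\alpha_\bullet)_0 + R_{k+1}$, because every derivative that instead falls on $K$, on a lower-order factor $\p_{x_3}^{j}(\text{logs})$ with $j<k+1$, on $(\alpha_\bullet)_0$ (in which case $\p_{x_3}(\alpha_\bullet)_0$ enters, an $R_1$ quantity by the elastic analogue of Lemma \ref{l: acoustic partial_x_3 a_0 formula}), or on the lower-order amplitude pieces hidden inside $D_P^{J+1}$, produces a product of at most $k+1$ parameter derivatives, hence an $R_{k+1}$ term. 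Applying this to the $B_P$ term (one derivative, starting from order $\abs{J}$) and to the $C_P$ term (two derivatives, starting from order $\abs{J}-1$), and feeding the $D_P^{J+1}\p_{x_3}^2\log c_{\bullet,P}$ contribution into the $A_P^{-1}D_P^0$ block of $\mathcal M$, the top-order part of $\col{(\gamma_{2,\bullet})_{J-1}\\\p_{x_3}(\alpha_{\bullet})_J}$ acquires exactly one additional left factor of the $4\times 4$ block matrix $\begin{pmatrix}A_P^{-1}B_P & A_P^{-1}C_P\\ I & 0\end{pmatrix}$; since this is the matrix whose $(\abs{J}-1)$-th power is $\mathcal M_J$, the power bumps up from the $(\abs{J}-2)$-th to the $(\abs{J}-1)$-th and the formula closes.

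The step I expect to be the main obstacle is precisely the $R_j$-accounting in this last paragraph: one must verify with care that none of the unlisted terms — derivatives landing on $A_P^{\pm1},B_P,C_P$ (which, in the curved case of Section \ref{s: nonflat case}, also depend on the shape operator), on lower-order log-derivatives, on $(\alpha_\bullet)_0$, or on the subprincipal amplitudes inside $D_P^{J+1}$ — contributes at the top order $\p_{x_3}^{1+\abs{J}}$, and that no cancellation occurs between the $B_P\p_{x_3}$ and $C_P\p_{x_3}^2$ contributions that would spoil the block structure. This is the same mechanism as in \cite[Section 3]{RachBoundary}; the only genuinely new ingredient is that the recurrence couples the $P$-amplitude components $(\alpha_\bullet,\gamma_{2,\bullet})$ through the interface, but that coupling has already been absorbed into the derivation of \eqref{eq3.1} and of Proposition \ref{Prop_3.7}, so the present lemma reduces to propagating those relations.
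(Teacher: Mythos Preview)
Your proposal is correct and follows essentially the same route as the paper, which simply defers to \cite[Lemma 3.12]{RachBoundary}: induction on $\abs{J}$ with Proposition \ref{Prop_3.7} (inverted through $A_P^{-1}$) supplying the step, the explicit $J=0$ computation behind Proposition \ref{prop 5} and \eqref{eq3.1} supplying the base case $\mathcal M_{\gamma_2,\alpha}$, and the Leibniz/$R_j$ bookkeeping absorbing all sub-top-order derivatives. One small indexing slip: at $J=-1$ the $A_P^{-1}D_P^0$ block \emph{is} already active (it sits in the top block of $\mathcal M$, which is exactly $(I\ 0)\mathcal M_{-1}\mathcal M$), so the ``not yet activated'' remark should be dropped; the genuine pre-base case is the $\mathcal M_{\gamma_2,\alpha}$ identity for $(\gamma_2)_{-1},\p_{x_3}(\alpha)_0$, and $J=-1$ is the first instance of the main formula.
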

\noindent The proof of the above Lemma follows from similar calculations done in \cite[Lemma 3.12]{RachBoundary}.

%

\begin{lemma}\label{Lem_3.13_1}
One can determine $\p_{x_3}^{\abs{J}}c^{(\pm)}_{S}$ and $\p_{x_3}^{\abs{J}}\rho^{(\pm)}$ on $\Gamma_{-}$ from the knowledge of $(u_R)_{j}$, $c_{\PS}$ and $\rho$ on $\Omega_{-}$, for $j = 0,-1,\dots,J-1$.
\end{lemma}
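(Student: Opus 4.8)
The plan is to run an induction on $\abs{J}$, taking Proposition \ref{prop 5} as the base case $\abs{J}=1$. Assume the statement holds for all orders strictly below $\abs{J}$; equivalently, all normal derivatives $\p_{x_3}^{k}c^{(+)}_{\PS}$ and $\p_{x_3}^{k}\rho^{(+)}$ with $k\le \abs{J}-1$ are already known on $\Gamma_{+}$, so that every $R_{\abs{J}-1}$ quantity is at our disposal. The first step is to feed the reflection data into Lemma \ref{lemma_4}: since we are given $(u_R)_j$ on $\Gamma_-$ for the range $j=0,-1,\dots,J-1$ (which contains the range $-1,\dots,J$ needed by Lemma \ref{lemma_4}) and the parameters above $\Gamma$ are fixed together with the probing field $(u_I)_j$, Lemma \ref{lemma_4} produces the order $J+1$ term $(\p_{x_3}u_T)_{J+1}$ of the transmitted field on $\Gamma_{+}$ as a known quantity.

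Next I would expand $(\p_{x_3}u_T)_{J+1}$ through formula \eqref{eq3.1} (with the index shifted by one) and project it onto $M_1$, exactly as in the identity \eqref{key_e_5} used in the proof of Proposition \ref{prop 5}. Since $M_1$ is orthogonal to both propagation directions $\xi_{\bullet,P},\xi_{\bullet,S}$ and to $N,M,M_2$, this projection annihilates the compressional contributions and keeps only the shear polarization along $N_1$. Using the recurrence of Proposition \ref{Prop_3.7} and Lemma \ref{Lem_3.12} with index $J+1$, together with their shear-wave counterparts obtained by the same computation as in \cite[Section 3]{RachBoundary}, to write the order $\abs{J}$ amplitude coefficients of $u_T$ in terms of $\p_{x_3}^{\abs{J}}(\log c_P,\log c_S,\log\sqrt{\rho})$ modulo $R_{\abs{J}-1}$, and noting that the coefficient of $\p_{x_3}^{\abs{J}}\log c_P$ drops out of the $M_1$-component, I expect to arrive at a scalar identity of the same shape as \eqref{key_e_5}, namely
\begin{equation*}
\frac{M_1}{\abs{M_1}^2}\cdot (\p_{x_3}u_T)_{J+1}
= -\left(\p_{x_3}^{\abs{J}}\log\sqrt{\rho^{(+)}} + \frac{1}{(2c_S^{(+)})^2 f_S}\,\p_{x_3}^{\abs{J}}\log c_S^{(+)}\right)(\alpha_{1,T})_0 + R_{\abs{J}-1},
\end{equation*}
where $f_S=f_S(\abs{\xi'}/\tau)$ is an $R_0$ quantity that does not involve any parameter derivative of order $\ge 1$. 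By the inductive hypothesis every term in this relation is known except $\p_{x_3}^{\abs{J}}\log\sqrt{\rho^{(+)}}$ and $\p_{x_3}^{\abs{J}}\log c_S^{(+)}$.

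The conclusion then follows the finishing move of Proposition \ref{prop 5} and Lemma \ref{l: recover 2 params from 0'th order reflect}: evaluate the identity at two covectors with $\abs{\xi_1'}/\tau_1\neq\pm\abs{\xi_2'}/\tau_2$, so that $f_S$ takes two distinct values; subtracting eliminates $\p_{x_3}^{\abs{J}}\log\sqrt{\rho^{(+)}}$, and because $1/f_S$ genuinely varies this determines $\p_{x_3}^{\abs{J}}\log c_S^{(+)}$, hence $\p_{x_3}^{\abs{J}}c_S^{(+)}$; substituting back yields $\p_{x_3}^{\abs{J}}\log\sqrt{\rho^{(+)}}$ and thus $\p_{x_3}^{\abs{J}}\rho^{(+)}$. (The companion recovery of $\p_{x_3}^{\abs{J}}c_P^{(+)}$ is then obtained in the same way from the $M_2$-projection, using Proposition \ref{Prop_3.7} and Lemma \ref{Lem_3.12} directly, now that the order $\abs{J}$ derivatives of $c_S$ and $\rho$ are available; this closes the induction.)

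The part I expect to require the most care is the second step: proving the shear-wave analog of Lemma \ref{Lem_3.12} and verifying, term by term in \eqref{eq3.1}, that after the $M_1$-projection the only parameter derivatives of order $\abs{J}$ that survive are those of $c_S$ and $\rho$, everything else being an $R_{\abs{J}-1}$ quantity already known by induction; the bookkeeping of which derivatives of $N_i,M_i,\xi_{3,\bullet,\PS}$ and of the amplitude coefficients feed into which order is delicate. A secondary, but routine, point is confirming the nondegeneracy of the resulting $2\times 2$ linear system in the two covectors, which is the elastic analog of the elementary computation already carried out in Lemma \ref{l: recover 2 params from 0'th order reflect} and Proposition \ref{prop 5}.
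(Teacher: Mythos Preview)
Your proposal is correct and follows essentially the same route as the paper: apply Lemma~\ref{lemma_4} to obtain the normal derivative of the transmitted amplitude at the appropriate order, project onto $M_1$ using \eqref{eq3.1} together with the recurrence of Proposition~\ref{Prop_3.7}/Lemma~\ref{Lem_3.12} (and its shear analog), and then vary $\abs{\xi'}/\tau$ to decouple the $c_S$ and $\rho$ contributions. The paper's argument is not phrased as an explicit induction but is logically identical; note also that the paper's key identity \eqref{equation_67_1} carries an overall nonzero factor $(i/2\xi_{3,T,S})^{\abs{J}}$ coming from the iteration, which your displayed formula omits, and the paper treats the $M_2$-projection for $c_P$ in the separate Lemma~\ref{l: higher order p speed from reflection} rather than folding it into this statement.
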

\begin{proof}
From the equation \eqref{eq3.1} and Lemma \ref{Lem_3.12} we obtain the following relation
\begin{multline}\label{equation_67_1}
\left(\p_{x_3}u_{T}\right)_J \cdot \frac{M_1}{\abs{M_1}^2}
\\= -\left(\frac{i}{2\xi_{3,T,S}}\right)^{\abs{J}} \col{0\\\frac{1}{2}\left(1-\frac{\abs{\xi'}}{\xi_{3,T,S}^2}\right)\\1} \cdot \p_{x_3}^{\abs{J}+1}\col{ \log c_P \\ \log c_S \\ \log \sqrt \rho}(\alpha_{1,T})_0 + R_{\abs{J}}.
\end{multline}
From Lemma \ref{lemma_4} we get $\left(\p_{x_3}u_{T}\right)_J = \left(\p_{x_3}\widetilde{u}_{T}\right)_J$ and fact that $M_1$ is a $R_0$ quantity we obtain
\beq
\begin{aligned}\label{J-th_relation_1}
\left(\frac{i}{2\xi_{3,T,S}} \right)^{\abs{J}}
&\left[
\frac{1}{2}(\p_{x_3}^{1+\abs{J}} \log c_S^{(+)})\left(1 - \frac{\abs{\xi'}^2}{(\xi_{3,T,S})^2}  \right) + (\p_{x_3}^{1+\abs{J}}\log \sqrt{\rho^{(+)}})
\right]\\
&=
\left(\frac{i}{2\xi_{3,T,S}} \right)^{\abs{J}}
\left[
\frac{1}{2}(\p_{x_3}^{1+\abs{J}} \log \tilde{c}_S^{(+)})\left(1 - \frac{\abs{\xi'}^2}{(\xi_{3,T,S})^2}  \right)\right. \\
&\left. \qquad \qquad \qquad \qquad \qquad \qquad + (\p_{x_3}^{1+\abs{J}}\log \sqrt{\tilde{\rho}^{(+)}})\right],
\quad\mbox{on }\Gamma_{+}.
\end{aligned}
\eeq
Varying $(1+\abs{\xi'}^2/\xi_{3,T,S}^2) = (c_S^{(+)})^{-2}f_S^{-2}$ as in the proof of Proposition \ref{prop 5} we get
\begin{multline*}
\frac{1}{(c_S^{(+)})^2f_S^2}(\p_{x_3}^{1+\abs{J}} \log c_S^{(+)})
- \frac{1}{(\tilde{c}_S^{(+)})^2f_S^2}(\p_{x_3}^{1+\abs{J}} \log \tilde{c}_S^{(+)})\\
= \frac{1}{(c_S^{(+)})^2(f_S^{(1)})^2}(\p_{x_3}^{1+\abs{J}} \log c_S^{(+)})
- \frac{1}{(\tilde{c}_S^{(+)})^2(f_S^{(1)})^2}(\p_{x_3}^{1+\abs{J}} \log \tilde{c}_S^{(+)}),\\
\text{so that}\quad
(c_S^{(+)})^2\left(f_S^2 - (f_S^{(1)})^2\right)\left( \p_{x_3}^{1+\abs{J}} \log c_S^{(+)} - \p_{x_3}^{1+\abs{J}} \log \tilde{c}_S^{(+)} \right) = 0
\quad\mbox{on }\Gamma_{+}.
\end{multline*}
Choosing $f_S \neq f_S^{(1)}$ we obtain $\p_{x_3}^{1+\abs{J}}c_S^{(+)}\restriction_{\Gamma_{+}} = \p_{x_3}^{1+\abs{J}}\tilde{c}_S^{(+)}\restriction_{\Gamma_{+}}$. Going back to \eqref{J-th_relation_1} we further obtain $\p_{x_3}^{1+\abs{J}}\rho^{(+)}\restriction_{\Gamma_{+}} = \p_{x_3}^{1+\abs{J}}\tilde{\rho}^{(+)}\restriction_{\Gamma_{+}}$.
\end{proof}

\begin{lemma}\label{l: higher order p speed from reflection}
One can determine $\p_{x_3}^{\abs{J}+1}c_P^{(+)}\restriction_{\Gamma_{+}}$ from the knowledge of $(u_R)_{j}\restriction_{\Gamma_{-}}$, for $j=0,-1,$ $\dots,J-1$, where $J\leq -1$.
\end{lemma}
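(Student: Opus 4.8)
The plan is to carry out, for general $J\le -1$, the two-covector argument already used in Proposition~\ref{prop 5} (the case $\abs{J}=1$) and in Lemma~\ref{Lem_3.13_1}, but now extracting the $M_2$-component of $\left(\p_{x_3}u_T\right)_J$ instead of its $M_1$-component: the highest-order normal derivative of $c_P$ enters $\left(\p_{x_3}u_T\right)_J$ only through the $P$-mode coefficients $(\gamma_{2,T})_{J-1}$ and $\p_{x_3}(\alpha_T)_J$, which are governed by Lemma~\ref{Lem_3.12}. So the first step is to produce the $M_2$-analog of \eqref{equation_67_1}: substituting the expressions for $(\gamma_{2,T})_{J-1}$ and $\p_{x_3}(\alpha_T)_J$ of Lemma~\ref{Lem_3.12} (together with the analogous $S$-mode recursion used to derive \eqref{equation_67_1}) into the formula \eqref{eq3.1} and pairing with $M_2/\abs{M_2}^2$, one obtains on $\Gamma_+$ an identity of the form
\begin{multline*}
\frac{M_2}{\abs{M_2}^2}\cdot\left(\p_{x_3}u_T\right)_J
=\left(\tfrac{i}{2\xi_{3,T,P}}\right)^{\abs{J}}\Big[c_P^\flat\,\p_{x_3}^{\abs{J}+1}\log c_P^{(+)}\\
+c_S^\flat\,\p_{x_3}^{\abs{J}+1}\log c_S^{(+)}+c_\rho^\flat\,\p_{x_3}^{\abs{J}+1}\log\sqrt{\rho^{(+)}}\Big](\alpha_T)_0+R_{\abs{J}},
\end{multline*}
with $c_P^\flat,c_S^\flat,c_\rho^\flat$ explicit $R_0$ functions of already-known data; the decisive feature is that $c_P^\flat\not\equiv 0$ on the hyperbolic (non-glancing) set, which is the $\abs{J}\ge 1$ version of the inequality $\tfrac{\xi_{3,T,P}-\xi_{3,T,S}}{2\xi_{3,T,P}^2}\neq i\tfrac{\abs{\xi'}}{\xi_{3,T,P}}$ from \eqref{key_e_8}.

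Next I would run the recovery inductively on $\abs{J}$, the base cases being Lemma~\ref{lem_1} and Proposition~\ref{prop 5}. Assuming $\p_{x_3}^{k}c_P^{(+)},\p_{x_3}^{k}c_S^{(+)},\p_{x_3}^{k}\rho^{(+)}$ determined for $k\le\abs{J}$, the hypothesis $(u_R)_j=(\tilde u_R)_j$ on $\Gamma_-$ for $j=0,-1,\dots,J-1$ has two consequences: by Lemma~\ref{Lem_3.13_1} it already forces $\p_{x_3}^{\abs{J}+1}c_S^{(+)}=\p_{x_3}^{\abs{J}+1}\tilde c_S^{(+)}$ and $\p_{x_3}^{\abs{J}+1}\rho^{(+)}=\p_{x_3}^{\abs{J}+1}\tilde\rho^{(+)}$ on $\Gamma_+$; and by Lemma~\ref{lemma_4}, applied with index $J-1$, it forces $\left(\p_{x_3}u_T\right)_J=\left(\p_{x_3}\tilde u_T\right)_J$ on $\Gamma_+$. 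Moreover $(\alpha_T)_0=(\tilde\alpha_T)_0$ by the zeroth-order transmission conditions \eqref{e: elastic 0'th order trans conditions} and Lemma~\ref{lem_1}, and all the $R_0$- and $R_{\abs{J}}$-quantities coincide for the two operators because, by construction, they involve only normal derivatives of the parameters of order $\le\abs{J}$ (known by the inductive hypothesis), the amplitudes $(u_R)_{J'},(u_T)_{J'}$ with $J'\ge J-1$ (equal, since $(u_T)_{J'}=(u_I)_{J'}+(u_R)_{J'}$), and quantities living entirely in $\overline\Omega_-$.

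Subtracting the identity of the first step from its tilded version then kills every term except the top $c_P$-term, leaving $c_P^\flat\big(\p_{x_3}^{\abs{J}+1}\log c_P^{(+)}-\p_{x_3}^{\abs{J}+1}\log\tilde c_P^{(+)}\big)(\alpha_T)_0=0$ on $\Gamma_+$. Since $c_P^\flat\not\equiv 0$ there and $(\alpha_T)_0\neq 0$ for a suitable incident field (up to the invertible $S_T$ it is the transmission of $(\mathcal A_I)_0$), this yields $\p_{x_3}^{\abs{J}+1}c_P^{(+)}=\p_{x_3}^{\abs{J}+1}\tilde c_P^{(+)}$ on $\Gamma_+$ and closes the induction; if a chosen covector happens to make some $c_\bullet^\flat$ vanish, one varies $\abs{\xi'}/\tau$ over the hyperbolic cone exactly as in Proposition~\ref{prop 5}.

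The one genuinely new point — hence the expected main obstacle — is the first step: verifying that, after the matrix products $\mathcal M_J\cdot\mathcal M\cdot\mathcal M_{\gamma_2,\alpha}$ of Lemma~\ref{Lem_3.12} are inserted into \eqref{eq3.1} and the result is paired with $M_2$, the scalar coefficient $c_P^\flat$ of $\p_{x_3}^{\abs{J}+1}\log c_P^{(+)}$ is not identically zero on the non-glancing set. This is an explicit but somewhat involved computation with the $R_0$ matrices $A_P,B_P,C_P$ and the $R_{\abs{J}-1}$ vectors $D_P^{J+1}$ appearing in Proposition~\ref{Prop_3.7}; as in the case $\abs{J}=1$, I expect it to reduce to a polynomial non-identity in $\xi_{3,T,P},\xi_{3,T,S},\abs{\xi'},\tau$ which holds because, away from glancing, $\xi_{3,T,P}$ and $\xi_{3,T,S}$ are real and bounded away from $0$. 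Everything else is routine bookkeeping, parallel to Lemma~\ref{Lem_3.13_1} and to \cite{RachBoundary}.
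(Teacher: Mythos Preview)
Your proposal is correct and follows essentially the same route as the paper: pair $(\p_{x_3}u_T)_J$ with $M_2/\abs{M_2}^2$, insert the recursion of Lemma~\ref{Lem_3.12} to isolate $\p_{x_3}^{\abs{J}+1}\log c_P^{(+)}$ modulo $R_{\abs{J}}$, and then subtract the tilded version after invoking Lemmas~\ref{lemma_4} and~\ref{Lem_3.13_1}. The paper writes the $M_2$-identity slightly more explicitly as
\[
\p_{x_3}(u_{T})_J \cdot \frac{M_2}{\abs{M_2}^2}
=\left(\begin{pmatrix}i(\xi_{3,T,P}-\xi_{3,T,S})\\0\\1\\0\end{pmatrix}\cdot \mathcal{M}_J\mathcal{M}\right)\cdot \p_{x_3}^{\abs{J}+1}\begin{pmatrix}\log c_P\\\log c_S\\\log\sqrt\rho\end{pmatrix}(\alpha_T)_0 + R_{\abs{J}},
\]
and then, for the point you correctly flag as the only nontrivial one --- the nonvanishing of your $c_P^\flat$ --- it simply defers to the identical computation in \cite[Theorem~3.13]{RachBoundary} rather than redoing the polynomial non-identity argument you outline.
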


\begin{proof}
In order to determine $\p_{x_3}^{\abs{J}+1}c_P^{(+)}$ on $\Gamma_{+}$ we go back to equation \eqref{eq3.1}, Lemma \ref{Lem_3.12} and observe that
\begin{multline*}
\p_{x_3}(u_{T})_J \cdot \frac{M_2}{\abs{M_2}^2}
=\left(\col{i(\xi_{3,T,P}-\xi_{3,T,S}) \\0 \\1 \\0} \mathcal{M}_J\mathcal{M}\right) \cdot \p_{x_3}^{\abs{J}+1}\col{\log c_P\\ \log c_S\\ \log \sqrt{\rho}}(\alpha_{T})_0 + R_{\abs{J}}, \\ \mbox{for } J=-1,-2,\dots.
\end{multline*}
Now we are in the exact same situation as in the proof of \cite[Theorem 3.13]{RachBoundary} and following the exact same calculations there one finally obtains $\p_{x_3}^{\abs{J}+1}\left(c_P^{(+)}\right) = \p_{x_3}^{\abs{J}+1}\left(\widetilde{c}_P^{(+)}\right)$ on ${\Gamma_{+}}$.
\end{proof}

\begin{proof}[Proof of Theorem \ref{t: Elastic case}]
The proof follows the same argument as the proof of Theorem \ref{Th_main_acoustic} in section \ref{s: lemmas and proof of acoustic thm} using the above lemmas in this section.
\end{proof}

\section{Extending the previous results to a non-flat interface}\label{s: nonflat case}
We will briefly show how the earlier proofs extend to the nonflat case. Essentially, the only changes are that lower order terms such as $(a_R)_J$, $J=-1,-2,\dots$ will contain terms involving the curvature of the interface, also known as the shape operator. However, when we try to determine $\p_\nu^j c_\PS$, any such curvature term in $(a_R)_{-j-1}$ will be a $R_j$ term and hence completely determined from the previous step in the induction argument. Hence, the proof will proceed with little change, and the formulas from the previous sections continue to hold. Nevertheless, it is worthwhile to do the calculation to see how the geometry of the interface is incorporated in the reflection operator. We do the main calculation in the acoustic case, but make it clear that similar calculation continue to hold in the elastic case.

\subsection*{Some geometric notation}
First, define boundary normal coordinates
\[
\tilde x = (\tilde x', \tilde x_3)
\]
near $\Gamma = \{ \tilde x_3 = 0\}$, here with respect to the Euclidean metric. Then $\Omega_-$ is given by $\tilde x_3 < 0$ and $\Omega_+$ is $\tilde x_3 > 0$ (see \cite{RachBoundary}). The directions of the boundary-normal-coordinate axes are given by the orthogonal vectors $\nabla_x \tilde x_1, \nabla_x \tilde x_2, \nabla_x \tilde x_3 = -\nu$. Here, we denote $\nu$ as the vector field that is normal to the interface when restricted to $\Gamma$. In semigeodesic coordinates, the Euclidean metric has the form $g = d\tilde x_3^2 + h(x, d\tilde x')$ where $h\restriction_\Gamma$ is the induced metric on $\Gamma$.

If $\tilde \xi'$ are the dual coordinates to $\tilde x'$, then $\xi_{tan} = \left( \frac{\p \tilde x'}{\p x}\right)^t \tilde \xi'$. We also have $\nabla_{tan} \phi = \left( \frac{\p \tilde x'}{\p x}\right)^t (\nabla_{\tilde x'} \phi)$, and $\nabla_x \phi = \nabla_{tan} \phi + \p_\nu \phi \nabla_x \tilde x_3$.
Similarly, $\xi_{3,\bullet}$ is defined as before using the $\tilde x$ coordinates: \[\p_\nu \phi_{\bullet}= \xi_{3,\bullet} =
\sqrt{  c_S^{-2}\abs{\p_t \phi_\bullet}^2-\abs{\nabla_{tan} \phi_\bullet }^2 } =
\sqrt{c_S^{-2}\abs{\tau}^2 - \abs{ \xi'_{tan}}^2}\] on $\Gamma$.
We also define a useful object for studying submanifolds.
\begin{definition}
Let $\Gamma$ be a surface, $p \in \Gamma$, and $\nu$ a smooth unit normal vector field defined along a neighborhood of $p$. The \emph{shape operator} is the map $S_\Gamma: T_p \Gamma \to T_p \Gamma$ defined by,
\[
S_\Gamma(X) = -\nabla_X \nu
\]
where $\nabla_X$ is the covariant derivative.
\end{definition}

\subsection*{Curvature contributions to the symbols}

We are ready to prove the following proposition. $a_R \sim \sum (a_R)_J$ will be defined as before, but we assume $\Gamma$ is a smooth interface and not necessarily flat.
\begin{prop}Assume $\Gamma$ is a smooth hypersurface.
Equation \eqref{e: (a_R)_J equation for acoustic} continues to hold for $(a_R)_J$ with $\p_{x_3}$ replaced by $\p_\nu$. That is,
\begin{multline*}
(a_R)_J
= -(-i/(2\xi_{3,T}))^J \left[(\p^{\abs{J}}_{\nu}\log \sqrt {\rho^{(+)}}) \right.
\\
\left. +\p^{\abs{J}}_{\nu}\log c^{(+)}_S\left(1 - \frac{(\p_t \phi_T)^2}{2c_S^2 \xi^2_{3,T}}\right)\right]\frac{(a_T)_{J+1}}{R_{\abs{J+1}}} + R_{\abs{J+1}}
\end{multline*}
Hence, Theorem \ref{Th_main_acoustic} continues to hold. Moreover, $R_{\abs{J+1}}$ above differs from $R_{\abs{J+1}}$ computed in \eqref{e: (a_R)_J equation for acoustic} by terms depending \emph{only} on $S_\Gamma$. Thus, the full reflection operator $R$ in the general case differs from $R$ in the flat case only by terms depending on $S_\Gamma$, i.e. the curvature of $\Gamma$. Similarly, in the elastic case Theorem \ref{t: Elastic case} and Corollary \ref{c: recover T from R} continue to hold as well.
\end{prop}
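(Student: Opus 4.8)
The plan is to re-run the derivations of Sections~\ref{Sec_Acoustic} and~\ref{Sec_statement} verbatim, but in the boundary normal coordinates $\tilde x=(\tilde x',\tilde x_3)$ introduced above, and to track precisely where the $\tilde x_3$-dependence of the metric coefficients $h$ enters. First I would rewrite the acoustic operator $P=\rho\p_t^2-\nabla_x\cdot\mu\nabla_x$ in these coordinates. Since the Euclidean metric takes the form $g=d\tilde x_3^2+h(\tilde x,d\tilde x')$, the normal direction decouples at top order, and one obtains $P=\rho\p_t^2-\mu\p_{\tilde x_3}^2-\mu\Delta_{h}+(\text{first order})$, where $\Delta_h$ is the Laplace--Beltrami operator of the $\tilde x_3$-dependent metric $h$ and the normal part of the first-order term is $-\big(\p_{\tilde x_3}\mu+\tfrac12\mu\,\p_{\tilde x_3}\log\det h\big)\p_{\tilde x_3}$. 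The crucial observation is that $\p_{\tilde x_3}\log\det h\restriction_\Gamma$ is a multiple of the mean curvature $\mathrm{tr}\,S_\Gamma$, and more generally every $\p_{\tilde x_3}^k h\restriction_\Gamma$ is a universal polynomial in $S_\Gamma$ and its tangential derivatives (there is no ambient curvature since the background is Euclidean). Thus all curvature dependence enters the operator solely through the coefficients $\p_{\tilde x_3}^k h\restriction_\Gamma$, and nothing else in the geometric optics construction changes in form.

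Next I would redo the eikonal and transport equations. The eikonal equation restricted to $\Gamma$ is unchanged, since $h\restriction_\Gamma$ is the induced metric, and the quantity $\xi_{3,\bullet}=\p_\nu\phi_\bullet=\sqrt{c_S^{-2}\tau^2-\abs{\xi'_{tan}}^2}$ on $\Gamma$ has exactly the form used before. Running the transport equations~\eqref{e: transport equations} in these coordinates, the recursion for $(a_\bullet)_J$ acquires, at each step, extra inhomogeneous terms built from the curvature coefficients above acting on the previously computed $(a_\bullet)_{J+1},(a_\bullet)_{J+2},\dots$. I would show by induction on $\abs J$ that, restricted to $\Gamma$, each such extra term in $(a_\bullet)_J$ carries at most $\abs J-1$ normal derivatives of $c_S,\rho$: a factor $\p_{\tilde x_3}^k h\restriction_\Gamma$ ``costs'' $k$ normal derivatives of the interface (i.e.\ of $S_\Gamma$) but zero normal derivatives of the material parameters, while each drop of one order of homogeneity costs at most one normal derivative of $c_S,\rho$, exactly as in the flat case. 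Hence every curvature contribution is an $R_{\abs{J+1}}$ quantity, now additionally depending on $S_\Gamma$ and its tangential derivatives. Substituting into~\eqref{e: Hamilton derivative to normal deriv}, the analogue of~\eqref{e: acoustic partial_{x_3}a_0 formula}, and the proof of Lemma~\ref{l: acoustic higher derivatives derivation} then yields~\eqref{e: (a_R)_J equation for acoustic} with $\p_{x_3}$ replaced by $\p_\nu$ and with an updated, but still harmless, remainder $R_{\abs{J+1}}$.

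With this formula in hand, Theorem~\ref{Th_main_acoustic} follows exactly as in Section~\ref{s: lemmas and proof of acoustic thm}: the induction on $j=\abs J$ recovers $\p_\nu^j c_S^{(+)}$ and $\p_\nu^j\rho^{(+)}$ from $(a_R)_{-j}$, because the curvature contributions lie inside the already-known $R_{j-1}$ term at that stage; in particular $S_\Gamma$ itself is read off from the lower-order symbols once the corresponding parameter derivatives are known. The assertion that the curved $R$ differs from the flat $R$ only by terms depending on $S_\Gamma$ is precisely the bookkeeping just described: every new term the curved case contributes to any $(a_R)_J$ is assembled from $\p_{\tilde x_3}^k h\restriction_\Gamma$, hence is a polynomial in $S_\Gamma$ (and its tangential derivatives) with coefficients given by flat-case quantities. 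For the elastic case, the same scheme applies: writing $Q$, the Neumann operator $\mathcal N$, and the matrices $B_\bullet$ in boundary normal coordinates introduces curvature only through $\p_{\tilde x_3}^k h\restriction_\Gamma$, so Lemmas~\ref{lemma_4}, \ref{Lem_3.12}, \ref{Lem_3.13_1}, \ref{l: higher order p speed from reflection}, together with the compatibility condition~\eqref{eq_compatibility} and the mode projectors $\Pi_{\PS}$, go through with the classes $R_j$ enlarged to include $S_\Gamma$-dependence, which is always one step behind in the induction; hence Theorem~\ref{t: Elastic case} and Corollary~\ref{c: recover T from R} persist.

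The step I expect to be the main obstacle is the derivative-budget bookkeeping in the second paragraph: proving that no curvature term appearing in $(a_\bullet)_J\restriction_\Gamma$ secretly carries $\abs J$ normal derivatives of a material parameter, which would obstruct recovery of the top-order derivative at that induction step. This amounts to verifying a ``separation of budgets'': the only mechanism producing an extra normal derivative of $c_S,\rho$ (resp.\ $\lambda,\mu,\rho$) is the action of the normal part of $P$ (resp.\ $Q$) on the lower-order amplitude, exactly as in the flat case, whereas the metric coefficients $h$ feed in only derivatives of the interface geometry, never of the parameters. Once this is checked, the remainder is a routine transcription of the flat-case arguments.
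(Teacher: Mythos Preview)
Your approach is essentially the paper's: pass to boundary normal coordinates, identify the curvature contributions in the transport hierarchy, verify they land in $R_{\abs{J+1}}$, and then rerun the flat-case induction. The derivative-budget bookkeeping you flag as the ``main obstacle'' is exactly what the paper does, and your separation-of-budgets heuristic is the right one.

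There is one point where the paper goes further than you do, and it matters for the ``moreover'' clause. The paper does not merely assert that the extra terms depend on $S_\Gamma$ and its tangential derivatives; it isolates the leading curvature contribution in $\p_\nu(a_\bullet)_J$ explicitly as $\p_\nu^{\abs J}H(x)/2$, where $H$ is (a constant times) the mean curvature, and then proves as a separate lemma the identity
\[
\p_\nu^{J}\kappa \;=\; (-1)^{J}J!\sum_i \kappa_i^{\,J+1},
\]
so that every normal derivative of the mean curvature is a \emph{pointwise} polynomial in the principal curvatures, i.e.\ in the eigenvalues of $S_\Gamma$. This is exactly what justifies the statement that the curved $R$ differs from the flat one by terms depending only on $S_\Gamma$ itself, not on its derivatives. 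Your version (``polynomial in $S_\Gamma$ and its tangential derivatives'') is enough to push through Theorems~\ref{Th_main_acoustic}, \ref{t: Elastic case} and Corollary~\ref{c: recover T from R}, since for the inverse problem all that matters is that the curvature terms are already known at the given induction step; but it does not deliver the sharper $S_\Gamma$-only dependence. In fact your own claim about $\p_{\tilde x_3}^k h\restriction_\Gamma$ can be strengthened: the Riccati equation $\p_{\tilde x_3}S=-S^2$ (no ambient curvature) shows these normal jets are polynomials in $S_\Gamma$ alone, with no tangential derivatives---this is the content you are missing, and it is precisely what the paper's mean-curvature lemma supplies.
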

The second statement about curvature is nontrivial and requires a careful geometric argument since as seen in the above equation, $(a_R)_J$ involves higher order normal derivatives and in the non-flat case, it will involve higher order normal derivatives of quantities related to the curvature of $\Gamma$ (c.f. \eqref{e: (a)_J with higher curvature}). Nevertheless, we show that all such high order derivatives are still determined by just the curvature of $\Gamma$ (in fact, the eigenvalues of $S_\Gamma$) and no other information is needed to compute the full reflection operator.

\begin{proof}
 First, we consider the acoustic case since the elastic case will follow from analogous calculations. Our goal is to compute the full symbol of the reflection operator in the non-flat case and show that the only additional terms from that of the flat case done earlier are completely determined by the shape operator $S_\Gamma$.

Obsserve that
\begin{align*}
\nabla \cdot \mu \nabla_x \phi
&= \nabla_x \mu \cdot \nabla_x \phi+ \mu \nabla \cdot (\nabla_{tan} \phi + \p_\nu \phi \nabla_x \tilde x_3)\\
&= \nabla_x \mu \cdot \nabla_x \phi
+ \mu \div_x(\nabla_{tan}\phi) + \mu\nabla \p_\nu \phi \cdot \nabla_x \tilde x_3
+ \mu \p_\nu \phi \div_x(\nabla_x \tilde x_3) \\
&= \p_\nu \mu \p_\nu \phi + \mu R(\nabla_{tan}\phi)+
\mu \p^2_\nu \phi
+ \mu \p_\nu \phi H(x) + R_0
\end{align*}
where $H(x)$ is proportional to the mean curvature of the interface at $x$, which can be computed by taking the divergence of the normal vector field and is determined by the eigenvalues of $S_\Gamma$. $R(X) = \langle \nabla_\nu X, \nu \rangle$ will also be a term containing curvature. However, here, $R_0$ will be non-curvature terms with $0$ normal derivatives of the material parameters.

From \eqref{e: transport equations} and \eqref{e: Hamilton derivative to normal deriv} we obtain
\begin{multline*}
i c_S^2 \xi_{3,\bullet} \p_\nu (a)_0
= -c_S^2 \xi_{3,\bullet}\left[(\p_{\nu}\log \sqrt \rho)
- (\p_{\nu}\log c_S)\left(1 - \frac{(\p_t \phi_\bullet)^2}{2c_S^2 \xi^2_{3,\bullet}}\right)\right.
\\\left.+ H(x)/2 + \frac{R(\nabla_{tan}\phi)}{2\xi_{3,\bullet}}\right](a_\bullet)_0 + R_0.
\end{multline*}
so that
\begin{multline}
 \p_\nu (a_\bullet)_0 = -\left[(\p_{\nu}\log \sqrt \rho)
- (\p_{\nu}\log c_S)\left(1 - \frac{(\p_t \phi_\bullet)^2}{2c_S^2 \xi^2_{3,\bullet}}\right)\right.
\\\left.+ H(x)/2 + \frac{R(\nabla_{tan}\phi)}{2\xi_{3,\bullet}}\right](a_\bullet)_0 + R_0
\end{multline}
Note that the $R$ and $H$ terms are $R_0$ terms an have no normal derivatives of any material parameters so that Lemma \ref{l: acoustic partial_x_3 a_0 formula} continues to hold even in this non-flat case.

Using semigeodesic coordinates actually allows us to simplify the $R(\nabla_{tan}\phi)$ term.
Let $e_1, e_2, e_3$ denote the basis of vector fields corresponding to the coordinates $\tilde x_1, \tilde x_2, \tilde x_3$ with $e_3 = \nu$ being the normal vector.
Then
\begin{equation}
\langle \nabla_\nu\nabla_{tan}\phi_\bullet, \nu \rangle
=\langle \nabla_\nu(\sum_{j=1}^2\p_{\tilde x_j} \phi_\bullet e_j), \nu \rangle
=\sum_{j=1}^2\p_{\tilde x_j} \phi_\bullet\langle \nabla_{e_3} e_j, e_3 \rangle
=\sum_{j=1}^2\p_{\tilde x_j} \phi_\bullet \Gamma^3_{j3} =0
\end{equation}
where $\Gamma^3_{j3}$ are Christoffel symbols, and these ones vanish in semigeodesic coordinates \cite[Section 2.4]{SUV2019transmission}. Hence, we conclude that $R(\nabla_{tan}\phi)=0$

First note that the computation for $(a_R)_0$ in \eqref{zeroth_order_wave} is identical to the flat case and has no curvature terms.
For $(a_R)_{-1}$ we need to compute $P\phi_\bullet$ and $\p_\nu (a_T)_0$
which will have the curvature terms above. But to compute $(a_R)_{-2}$, we will need $\p_\nu (a_T)_{-1}$ which will involve $P\phi_T$ and $P (a_T)_0$. This will involve $\p_\nu^2 (a_T)_0$ which in turn involves $\p_\nu P\phi_T$ which will have second derivatives of the elastic parameters, first normal derivatives of curvature terms, and curvature terms. However, any term with curvature will be $R_1$ and known from the previous step.

Thus,
\begin{align*}
i c_S^2 \xi_{3,\bullet}\p_\nu(a_\bullet)_{-1}
&= \frac{1}{2\rho} (P\phi_\bullet)(a_\bullet)_{-1}
+ \tau \p_{t}(a_\bullet)_{-1} -c_S^2 \eta_{tan} \cdot \nabla_{tan}(a_\bullet)_{-1}
-P(a_\bullet)_0\\
&=-c_S^2 \left[(\p^2_{\nu}\log \sqrt \rho)
- (\p^2_{\nu}\log c_S)\left(1 - \frac{(\p_t \phi_\bullet)^2}{2c_S^2 \xi^2_{3,\bullet}}\right)+ \p_\nu H(x)/2 \right](a_\bullet)_0 \\
&\qquad \qquad + R_1
\end{align*}
where $R_1$ has at most one normal derivative of parameters and no normal derivatives of the curvature. The quantity $\p_\nu H$ is related to both the mean curvature and the Gauss curvature of $\Gamma$ \cite[Lemma 3.2]{DouganMean_Curv_derivative}. Again, any curvature term will be $R_1$ so that the main formulas remain the same.

 After iteration as in the previous section, we obtain
 \begin{multline}\label{e: (a)_J with higher curvature}
 \p_\nu(a_\bullet)_{J}
=\ \left(\frac{-i}{\xi_{3,\bullet}}\right)^{\abs{J}+1} \left[(\p^{\abs{J}+1}_{\nu}\log \sqrt \rho) \right.
\\\left.- (\p^{\abs{J}+1}_{\nu}\log c_S)\left(1 - \frac{(\p_t \phi_\bullet)^2}{2c_S^2 \xi^2_{3,\bullet}}\right)+ \p^{\abs{J}}_\nu H(x)/2 \right](a_\bullet)_0 + R_{\abs{J}},
\end{multline}
 where $R_{\abs{J}}$  includes up to $|J|-1$ normal derivatives of $H$. Using \eqref{e: transport equations} and \eqref{e: transmission conditions} with the same argument in the flat case, we arrive at the equation for $(a_R)_J$ in the statement of the Proposition. Lemma \ref{l: higher der of mean curvature in terms of S_Gamma} applied to $\p^{\abs{J}}_\nu H$ shows that all curvature terms can be determined from $S_\Gamma$.
 This implies that one only needs the shape operator (and not its derivatives!) to compute the full reflection operator.

In the elastic case as well, we may use boundary normal coordinates and this creates interface curvature terms as in \cite{RachBoundary}. However, these terms will contain one normal derivative less than the highest order normal derivatives of the material parameters, and would still be included in the $R_{\abs{J}}$ remainder terms. Hence, as in the above calculation for the acoustic case and in \cite{RachBoundary}, the same formulas hold in Proposition \ref{prop 5}, Proposition \ref{Prop_3.7}, Lemma \ref{Lem_3.12}, and Lemma \ref{l: higher order p speed from reflection} where $\p_{x_3}$ becomes $\p_\nu$. The remaining argument to prove Theorem \ref{t: Elastic case} proceeds as in the flat case in the previous section.
\end{proof}

  The higher order normal derivatives $\p_\nu^k H(x)$ can be related to the principal curvatures of the interface , using the methods of \cite{DouganMean_Curv_derivative}. Note this is irrelevant for Theorem \ref{Th_main_acoustic} since we just showed \eqref{e: p_x_3 (a)_J higher order} continues to hold even in the general case since $\p^{\abs{J}}_\nu H(x)$ are indeed $R_{\abs{J}}$ terms. Next, we show that even these higher order normal derivatives only depend on the curvature (shape operator) of the interface and not the higher order derivatives.

We follow \cite{DouganMean_Curv_derivative} to introduce a natural defining function for $\Gamma$ for the interface normal coordinates that we use to compute.
The signed distance function $b(x)$ to the surface $\Gamma$ is defined as
\[
b(x,\Gamma) = \begin{cases}
\text{dist}(x,\Gamma) & \mbox{for } x \in \Omega_- \\
 0 & \mbox{for } x \in \Gamma \\
-\text{dist}(x,\Gamma) & \mbox{for } x \in \Omega_+ \\
\end{cases}
\]
where
\[
\text{dist}(x,\Gamma) = \text{inf}_{y \in \Gamma}|y-x|.
\]
Then $\tilde x_3 = b$ is the defining function of $\Gamma$ and
\[
\nu = \nabla b(x)\restriction_\Gamma
\]
and we sometimes denote $\nu$ for the vector field $\nabla b = \nabla_x \tilde x_3$ where convenient. Since $b$ is a distance function, $\abs{\nabla b} = 1$ \cite{DouganMean_Curv_derivative}. Denote by $\kappa(x)$ at $\Gamma$ the mean curvature of $\Gamma$ at $x$ and $\kappa_i$ are the principal curvatures of the surface, which are the eigenvalues of $S_\Gamma$. As mentioned, $H(x)$ is proportional to $\kappa$ by a constant so that all our results for $\kappa$ extend naturally to $H$. We first mention the following important lemma
\begin{lemma}(\cite[Lemma 3.2]{DouganMean_Curv_derivative})\label{l: first der of mean curvature}
The normal derivative of the mean curvature of a surface $\Gamma$ of class $C^3$ only depends on the shape operator of $\Gamma$. More precisely
\[
\p_\nu \kappa = - \sum_i \kappa_i^2.
\]
For a two-dimensional surface in $3d$, this is equal to
\[
\p_\nu \kappa = - (\kappa_1^2 + \kappa_2^2) =
- (\kappa^2 - 2\kappa_G)
\]
where $\kappa_G = \kappa_1 \kappa_2$ denotes the Gauss curvature.
\end{lemma}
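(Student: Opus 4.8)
The plan is to reduce the statement to a pointwise identity for the signed distance function $b$ to $\Gamma$, exactly the defining function used in the preceding paragraph. Recall that when $\Gamma$ is of class $C^3$ the function $b$ is $C^3$ in a tubular neighborhood of $\Gamma$ (this is the standard regularity theory for distance functions, e.g. Foote's theorem; see also \cite{DouganMean_Curv_derivative}), that $|\nabla b|^2 \equiv 1$ there, and that $\nu = \nabla b\restriction_\Gamma$ with $\p_\nu = \langle \nabla b, \nabla\,\cdot\,\rangle$ on $\Gamma$. The geometric input I would use is the classical identity that $\Delta b$ restricted to a level set $\{b=c\}$ is the mean curvature of that level set with respect to $\nabla b$; in particular $\Delta b\restriction_\Gamma = \kappa$. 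Consequently $\p_\nu \kappa = \langle \nabla b, \nabla(\Delta b)\rangle\restriction_\Gamma$, and the whole lemma becomes a statement about the Hessian of $b$.

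First I would apply the Bochner--Weitzenb\"ock identity to $b$: in the flat ambient space, for any $C^3$ function $f$,
\[
\tfrac12 \Delta |\nabla f|^2 = |\nabla^2 f|^2 + \langle \nabla f, \nabla(\Delta f)\rangle ,
\]
where $|\nabla^2 f|^2$ denotes the Hilbert--Schmidt norm of the Hessian, i.e. $\mathrm{tr}\big((\nabla^2 f)^2\big)$. Taking $f=b$ and using $|\nabla b|^2 \equiv 1$ kills the left-hand side, giving the pointwise relation $\langle \nabla b, \nabla(\Delta b)\rangle = -\mathrm{tr}\big((\nabla^2 b)^2\big)$, hence on $\Gamma$
\[
\p_\nu \kappa = -\,\mathrm{tr}\big((\nabla^2 b)^2\big)\restriction_\Gamma .
\]

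Next I would identify the spectrum of $\nabla^2 b$ along $\Gamma$. Differentiating $|\nabla b|^2 = 1$ gives $\nabla^2 b(\nabla b) = 0$, so $\nu$ lies in the kernel; and on $T\Gamma$ the operator $\nabla^2 b$ coincides, up to an overall sign fixed by the orientation of $\nu$, with the shape operator $S_\Gamma$, whose eigenvalues are the principal curvatures $\kappa_i$. Thus $\nabla^2 b\restriction_\Gamma$ is diagonalizable with eigenvalues $\{0,\kappa_1,\dots,\kappa_{n-1}\}$, so $\mathrm{tr}\big((\nabla^2 b)^2\big)\restriction_\Gamma = \sum_i \kappa_i^2$, which yields $\p_\nu \kappa = -\sum_i \kappa_i^2$ and depends only on $S_\Gamma$. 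For a two-dimensional surface in $\mathbb{R}^3$ this is $-(\kappa_1^2+\kappa_2^2) = -\big((\kappa_1+\kappa_2)^2 - 2\kappa_1\kappa_2\big) = -(\kappa^2 - 2\kappa_G)$.

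I expect no serious obstacle here: the only care needed is the bookkeeping of sign conventions (mean curvature as sum versus average of the $\kappa_i$, and the orientation of $\nu$), and these are harmless since both $\p_\nu\kappa$ and $\sum_i\kappa_i^2$ are unchanged under $\nu\mapsto-\nu$, so the identity is orientation-independent. The one point worth stating explicitly is the $C^3$ regularity of $b$, which is precisely what the $C^3$ hypothesis on $\Gamma$ buys and is what makes $\nabla(\Delta b)$ classically meaningful. If one prefers to bypass the Bochner identity, the same conclusion follows from the matrix Riccati equation $\tfrac{d}{ds}(\nabla^2 b) = -(\nabla^2 b)^2$ satisfied along the unit-speed normal lines (which are geodesics in flat space), whose trace gives $\tfrac{d}{ds}\kappa = -\mathrm{tr}\big((\nabla^2 b)^2\big)$ directly.
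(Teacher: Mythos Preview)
Your argument is correct. Note, however, that the paper does not give its own proof of this lemma: it is quoted verbatim from \cite[Lemma~3.2]{DouganMean_Curv_derivative}, and the paper simply invokes the identity $\p_{\tilde x_3}\kappa = -|D^2 b|^2$ from that reference when proving the higher-order generalization in Lemma~\ref{l: higher der of mean curvature in terms of S_Gamma}.

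That said, it is instructive to compare your route with the computation the paper carries out for the higher-order case. There the argument proceeds by brute-force index manipulation: one writes $1 = b_{x_k}b_{x_k}$, differentiates repeatedly, and tracks the resulting contractions of $D^2 b$ to obtain $\p_{\tilde x_3}^J \kappa = (-1)^J J!\,\mathrm{tr}\big((D^2 b)^{J+1}\big)$. Your use of the Bochner--Weitzenb\"ock formula is the $J=1$ instance of this same computation, packaged more invariantly; indeed, the Bochner identity for a function $f$ with $|\nabla f|^2$ constant is nothing but the contracted second derivative of that constraint. The alternative you mention at the end --- the Riccati equation $\tfrac{d}{ds}(\nabla^2 b) = -(\nabla^2 b)^2$ along normal lines --- is arguably the cleanest route, since taking its trace gives all the $\p_\nu^J\kappa$ at once and makes the structure of Lemma~\ref{l: higher der of mean curvature in terms of S_Gamma} transparent without any index chasing. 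Your remarks on sign conventions and on the $C^3$ regularity needed for $\nabla(\Delta b)$ to be classically defined are well placed.
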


We shall extend this type of result to higher order derivatives as well.
\begin{lemma}
\label{l: higher der of mean curvature in terms of S_Gamma}
 All higher order normal derivative of the mean curvature of a surface $\Gamma$ of class $C^\infty$ only depends on the shape operator of $\Gamma$. More precisely
\[
\p^J_\nu \kappa = (-1)^J J! \sum_i \kappa_i^{J+1},
\]
and $\p_\nu^J H$ differs from this by a constant depending only on the dimension.
\end{lemma}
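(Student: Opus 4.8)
The plan is to track the principal curvatures of $\Gamma$ along the normal direction through the classical parallel-hypersurface formula. Retain the interface normal coordinates of the previous subsection: $b$ is the signed distance to $\Gamma$, $\tilde x_3 = b$, $\nu = \nabla b$, and $|\nabla b|\equiv 1$, so the normal lines to $\Gamma$ are straight Euclidean segments and the level sets $\Gamma_t := \{b = t\}$ are precisely the parallel surfaces of $\Gamma = \Gamma_0$. Fix $p\in\Gamma$, let $\kappa_1,\dots,\kappa_{n-1}$ be the eigenvalues of $S_\Gamma$ at $p$, and let $S_{\Gamma_t}$ denote the Weingarten operator of $\Gamma_t$ at the point of $\Gamma_t$ lying on the normal line through $p$. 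The only geometric input I need is that, in Euclidean space, $S_{\Gamma_t}$ obeys the matrix Riccati equation $\p_\nu S_{\Gamma_t} = -\,S_{\Gamma_t}^2$, with the sign normalized so that the trace identity reproduces Lemma \ref{l: first der of mean curvature}; equivalently, the principal curvatures evolve by $\kappa_i(t) = \kappa_i/(1+t\kappa_i)$ in the (constant-along-the-line) principal frame. Since $\Gamma$ is $C^\infty$, all of this is valid for $|t|$ small, and the statement may be cited from \cite{DouganMean_Curv_derivative} or derived directly: differentiating $\nabla^2 b\cdot\nabla b = 0$ a second time and using the vanishing of the Euclidean curvature tensor gives $\p_\nu(\nabla^2 b) = -(\nabla^2 b)^2$ on the tangent distribution of the level sets, which is the Riccati equation.

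Granting this, the lemma reduces to a short algebraic computation. Solving the Riccati equation near $t=0$ gives $S_{\Gamma_t} = S_\Gamma\,(I + tS_\Gamma)^{-1}$ (one checks directly that this solves $\p_\nu S = -S^2$ with $S_{\Gamma_0}=S_\Gamma$, using that $S_\Gamma$ commutes with $(I+tS_\Gamma)^{-1}$). Writing $\kappa(t)$ for $\operatorname{tr} S_{\Gamma_t}$ along the normal line through $p$, so that $\p_\nu^J\kappa(p) = \frac{d^J}{dt^J}\big|_{t=0}\kappa(t)$, the Neumann series gives, for $|t|$ small,
\[
\kappa(t) \;=\; \operatorname{tr}\!\big(S_\Gamma\,(I+tS_\Gamma)^{-1}\big) \;=\; \sum_{i}\frac{\kappa_i}{1+t\kappa_i} \;=\; \sum_{k\ge 0}(-1)^k\,t^k\,\operatorname{tr}\!\big(S_\Gamma^{\,k+1}\big).
\]
Reading off the coefficient of $t^J$ yields
\[
\p_\nu^J\kappa \;=\; (-1)^J\,J!\,\operatorname{tr}\!\big(S_\Gamma^{\,J+1}\big) \;=\; (-1)^J\,J!\sum_i\kappa_i^{\,J+1},
\]
which is a symmetric function of the eigenvalues of $S_\Gamma$ and involves no derivatives of $S_\Gamma$. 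Taking $J=1$ recovers Lemma \ref{l: first der of mean curvature}, and since $H(x)$ differs from $\kappa(x)$ only by the dimensional normalization constant $1/(n-1)$ defining the \emph{mean} curvature, the same identity holds for $\p_\nu^J H$ up to that constant.

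The only delicate point — the \emph{main obstacle}, such as it is — is fixing the sign in the Riccati equation and the orientation of the distance parameter $t$ so that the $J=1$ case matches the already-proved Lemma \ref{l: first der of mean curvature}; once this is done the entire expansion is forced and uniform in $J$, with no further analytic input. A secondary point, but trivial in flat space, is that the principal directions stay constant along the straight normal lines, so $\kappa(t)$ genuinely equals the sum of the smooth scalars $\kappa_i(t) = \kappa_i/(1+t\kappa_i)$, with no eigenvalue crossings to track; this is why the scalar ODEs and the matrix Riccati equation deliver the same formula.
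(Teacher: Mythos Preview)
Your argument is correct and arrives at the same formula, but by a genuinely different route than the paper. The paper works directly with the Hessian $D^2 b$ of the signed distance function: it differentiates the identity $\abs{\nabla b}^2 = 1$ and manipulates indices to show, by induction on $J$, that $\p_{\tilde x_3}^J \kappa = (-1)^J J!\,\text{tr}\big((D^2 b)^{J+1}\big)$, then evaluates on $\Gamma$ where the eigenvalues of $D^2 b$ are the principal curvatures. You instead recognize the derivative identity as the matrix Riccati equation $\p_\nu S_{\Gamma_t} = -S_{\Gamma_t}^2$, solve it in closed form as $S_{\Gamma_t} = S_\Gamma(I + tS_\Gamma)^{-1}$, take the trace, and read off all Taylor coefficients at once from the Neumann series. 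The two approaches are equivalent in content (the paper's index computation is the Riccati equation written out componentwise), but yours is more conceptual and delivers every order simultaneously without an explicit induction, while the paper's hands-on computation makes the role of the constraint $\abs{\nabla b}=1$ more transparent at each step. Your remarks about sign normalization against Lemma~\ref{l: first der of mean curvature} and the constancy of principal directions along the straight normals are apt and address the only points where the argument could go astray.
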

\begin{proof}
Observe that
\[
\p_\nu \kappa = (\nabla \kappa) \cdot \nabla b \restriction_\Gamma
\]
It is shown in \cite[Lemma 3.2]{DouganMean_Curv_derivative} that $\p_{\tilde x_3} \kappa = -\abs{D^2b}^2$ where $\abs{ \cdot }$ denotes the Frobenius norm of a matrix.

Thus
\begin{equation*}
-\p^2_{\tilde x_3} \kappa = \nabla (\abs{D^2 b}^2) \cdot \nabla b
= \nabla (b^2_{x_i x_j}) \cdot \nabla b
= 2b_{x_i x_j}b_{x_i x_j x_k}b_{x_k}
\end{equation*}

Next, we can use $1 = \abs{\nabla b}^2 = \sum_k b^2_{x_k} = b_{x_k} b_{x_k}$
where we understand the last equality as a sum over $k$, to obtain after a brief calculation
\begin{align}\label{e: partial^2 kappa with b_ij only}
\p_{\tilde x_3}^2 \kappa
&= 2 \text{tr}( (D^2 b)^3)
\end{align}
Next, $D^2 b\restriction_\Gamma = \nabla_\Gamma \nu$ whose eigenvalues are precisely the principal curvatures $\kappa_1, \dots, \kappa_{n-1}$ so that the eigenvalues of $D^2 b$ are $\kappa_i^3$ for $i = 1, \dots, n-1$.

Thus, for a constant $c_n$, we conclude
\[
\p_\nu^2 H = 2c_n \sum_i \kappa_i^3.
\]
We can obtain the higher order derivatives $\p^J_\nu \kappa$ analogously by using $1 = \abs{\nabla b}$ together with \eqref{e: partial^2 kappa with b_ij only} so that only terms in $D^2 b$ appear. In fact, we can show inductively that
\[
\p_{\tilde x_3}^J \kappa = (-1)^J J! \text{trace}((D^2 b)^{J+1}).
\]
Denote $b_{pq} = b_{qp} = b_{x_p x_q}$. Then by the inductive step
\[
\frac{1}{(-1)^{J-1} (J-1)!}\p_{\tilde x_3}^{J-1} \kappa =  \text{trace}((D^2 b)^{J}) = \sum_{i_1,\dots,i_{J}} b_{i_1 i_2} b_{i_2 i_3} \dots  b_{i_{J-1} i_{J}} b_{i_{J} i_1}
\]
 After a brief computation we obtain
\begin{align*}
\frac{1}{(-1)^{J-1} (J-1)!}\p_{\tilde x_3}^{J} \kappa  &= \nabla \left(\sum_{i_1,\dots,i_{J}} b_{i_1 i_2} b_{i_2 i_3} \dots  b_{i_{J-1} i_{J}} b_{i_{J} i_1} \right) \cdot \nabla b \\
&= -\sum_{p=1}^J \text{tr}(D^2b)^{J+1} = - J \text{tr}(D^2b)^{J+1}.
\end{align*}

Hence, using induction and taking the trace in the above formula allows us to conclude
\begin{equation}\label{e: partial^J H in terms of shape operator}
\p^J_\nu H = c_n(-1)^J J! \sum_i \kappa_i^{J+1}.
\end{equation}
\end{proof}

\section{An alternate viewpoint that relates to boundary determination}

As mentioned in the introduction, a relevant forward problem to our inverse problem is Michael Taylor's work in \cite{Taylor75}. Following \cite{yamamoto1989}, the elastic transmission problem may locally be cast as a first order boundary value problem near the interface, with $\Gamma$ acting as a boundary.
 Denote $\Omega_1 = \Omega_+$ and $\Omega_2 = \Omega_-$ from the introduction. We assume boundary normal coordinates are chosen so that locally, $\Gamma$ is given by $\{ x_3 = 0\}$. Again, $u$ denotes the solution to the elastic transmission problem on $\Omega \times \RR$ and we denote $u_i$ as $u$ restricted to $\Omega_i \times \RR$.

 Then, we denote $U_i = .^t(\Lambda(D_{x'},D_t) u_i, D_{x_3}u_i)$ where $\Lambda$ is a pseudo-differential operator with the symbol $\Lambda_1(\xi',\tau) = (\abs{\xi'}^2+\tau^2+1)^{1/2}.$
The transmission problem becomes the following boundary value problem
with the form (taken from \cite{yamamoto1989})
\[\begin{cases}
D_{x_3}U_i = M_i(x',D_{x'}, D_t) U_i \qquad & \text{ in }(-1)^{i+1}x_3 > 0\\
(I_3, 0 ) U_1 = (I_3, 0)U_2 \qquad & \text{ on } x_3 = 0, \\
B_1(x',D_{x'},D_t)U_1 = B_2(x',D_{x'}, D_t) U_2 \qquad & \text{ on } x_3 = 0
\end{cases}
\]
where $I_3$ is the $3 \times 3$ identity matrix, $M_i$ is a $3 \times 6$ matrix pseudo-differential operator of order one depending on the parameters in $\Omega_i$, and the $6 \times 3$ matrix principal symbol $(B_{i1}, B_{i2})(x',\xi',\tau)$ of $B_i = (B_{i1},B_{i2})(x',D_{x'}, D_t)$ is determined by the Neumann operator \eqref{elastic_Neumann} and depend on the parameters in region $\Omega_i$ (see \cite[Equation (2.2)]{yamamoto1989} for the exact definitions).

One may then construct the boundary operator $\gamma$ appearing in \cite{Taylor75} that determines a pseudodifferential equation between the ``incoming'' and ``outgoing'' elastic waves at the interface \cite{yamamoto1989}. The principal amplitudes of the outgoing waves at the interface are determined by $\gamma$, which are used to form the parametrix for the elastic wave equation away from glancing rays. Our inverse problem is to use these scattered amplitudes at the interface to determine the jet of the material parameters at a certain side of an interface.

\section{Declarations}
\subsection*{Funding} M.V.d.H. gratefully acknowledges support from the Simons Foundation under the MATH + X program, the National Science Foundation under grant
DMS-1815143, and the corporate members of the Geo-Mathematical Imaging Group at Rice University. G.U. was partly supported by NSF, a Walker Family Endowed Professorship at UW and a Si-Yuan Professorship at IAS, HKUST. S.B. was partly supported by Project no.: 16305018 of the Hong Kong Research Grant Council.

\subsection*{Conflict of interest/Competing interests}
{\bf Financial interests:} The authors declare they have no financial interests.
\\

\noindent {\bf Non-financial interests:} The authors declare they have no non-financial interests.

\subsection*{Availability of data and material} Not applicable

\subsection*{ Code availability} Not applicable

\appendix 

\section{Proofs of lemmas and propositions from Section \ref{Sec_Acoustic}}\label{s: proofs of acoustic lemmas}

These are proofs of the main statements in the acoustic case. Since they are a simpler, yet more lucid version of the elastic case, we relegate them to this appendix.

\subsection{Zeroth order recovery of the parameters at the interface}

\begin{proof}[Proof of Lemma \ref{l: recover 2 params from 0'th order reflect}]
By solving (\ref{zeroth_order_wave}), we get
\begin{equation*}
(a_R)_0(x',\tau,\xi') = \frac{\mu^{(-)}\xi_I - \mu^{(+)}\xi_T}{\mu^{(-)}\xi_I + \mu^{(+)}\xi_T}
= \frac{\mu^{(-)}\xi_I/\xi_T - \mu^{(+)}}{\mu^{(-)}\xi_I/\xi_T + \mu^{(+)}}
= \frac{af-c}{af+c},
\end{equation*}
where we denote $f = \xi_I/\xi_T, a = \mu^{(-)}, c = \mu^{(+)}.$ Note that $f = f(\abs{\xi'}/\tau)$ i.e. it is a function of the parameter $\abs{\xi'}/\tau$ while $a,c$ only depend on $x$.
Now, since $(a_R)_0 = (\tilde a_R)_0$ and assuming $\mu^{(-)} = \tilde{\mu}^{(-)}$ (i.e. $a=\tilde{a}$) on $\Gamma$ we obtain
\begin{align}\label{key_3}
\frac{af-c}{af+c} = \frac{a\tilde f-\tilde c}{a\tilde f+\tilde c},\quad
\text{if and only if} \quad a\tilde c f = ac \tilde f, \quad
\text{if and only if} \quad \frac{c}{\tilde c} = \frac{\tilde f}{f}.
\end{align}
Varying $\abs{\xi'}/\tau$ keeping everything else constant, we get
\begin{equation*}
\frac{c}{\tilde c} = \frac{\tilde f_1}{f_1},
\end{equation*}
where $f_1$ is $f$ evaluated at different value of $\abs{\xi'}/\tau$. Thus,
\begin{align*}
\frac{\tilde f}{f}=\frac{\tilde f_1}{f_1}\quad
\text{if and only if} \quad
\frac{(\tilde c^{(+)}_S)^{-2} - b^2}{(c^{(+)}_S)^{-2} - b^2}
= \frac{(\tilde c^{(+)}_S)^{-2} - b_1^2}{(c^{(+)}_S)^{-2} - b_1^2},
\end{align*}
where we used $c^{(-)}_S = \tilde c^{(-)}_S$ and labelled $b = \abs{\xi'}/\tau$, $b_1 = \abs{\xi'_1}/\tau_1$.
Cross multiplying we get the algebraic equation
\begin{equation*}
((\tilde c^{(+)}_S)^{-2}- ( c^{(+)}_S)^{-2})(b^2 - b_1^2) = 0
\end{equation*}
Note that, as long as we pick $b_1 \neq \pm b$, we recover $c_S^{(+)} = \tilde{c}_S^{(+)}$.
Then going back to (\ref{key_3}) one gets $c=\tilde{c}$, that is $\mu^{(+)} = \tilde \mu^{(+)}$ on $\Gamma$.
\end{proof}

\begin{rem} \label{rem: relative amplitudes computation}
In geophysical experiments, one often only has access to \emph{relative amplitudes} where the amplitude is normalized to be $1$ for an incident wave hitting the interface at a fixed particular angle. Concretely, for a fixed, $(x',\tau_1, \xi'_1)$ in the hyperbolic set, suppose one instead measures $R:= (a_R)_0(x',\tau,\xi')/ (a_R)_0(x',\tau_1,\xi'_1)$. The questions is can one recover $\mu^{(+)}$ and $c_S^{(+)}$ at $x'$ from $R$ at various incident angles (i.e. varying $\tau, \xi'$ in the hyperbolic set)?

In order to recover a single unknown parameter such as $\mu^{(+)}$, then this can be done with elementary means, but disentangling two material parameters is less clear.
For the uniqueness question, ignoring spacial variables, assume
\[\frac{(a_R)_0(\tau, \xi')}{(a_R)_0(\tau_1, \xi_1)}
=  \frac{(\widetilde{a_R})_0(\tau, \xi')}{(\widetilde{a_R})_0(\tau_1, \xi_1)}.\] Then one can show $\mu^{(+)} = \widetilde{\mu^{(+)}}$ and $c_S^{(+)} = \widetilde{c_S^{(+)}}$ with a similar argument as above.

One can even obtain a partial reconstruction algorithm directly from $R$. Via computation, one can show that \[L:= (R-1)/(R+1) = \frac{2ac(f-f_1)}{a^2ff_1 - c^2},\] where $a,c,f,f_1$ are as in the lemma. By solving a quadratic equation, we compute \[2c = \alpha/L + \sqrt{\alpha^2/L^2 + 4\beta}\] where $\alpha = 2a(f-f_1), \beta = a^2ff_1$ are independent of $c$. Since $c$ is independent of those variable, one can vary $\tau, \xi'$ within the hyperbolic set to obtain a nonlinear equation that needs to be solved for $c_S$ only (without any terms involving $c= \mu^{(+)}$), but it is unclear whether this can be done by elementary means. If it can, then our approach shows how one can do the recovery even with reflected amplitudes, and there is a reconstruction formula.
\end{rem}

\subsubsection{Recovery of the derivatives of the parameters at the interface}

\begin{proof}[Proof of Lemma \ref{l: acoustic partial_x_3 a_0 formula}]
Observe that for $J=0$, from (\ref{e: transport equations}) and (\ref{e: Hamilton derivative to normal deriv}) one obtains
\begin{equation}\label{key_6}
\p_{x_3}(a_\bullet)_0
= \frac{1}{2\rho c_S^2 \xi_{3, \bullet}}\left(P\phi_{\bullet}\right)(a_{\bullet})_0 + R_0.
\end{equation}
In order to calculate the term $P\phi_{\bullet} = (\rho\p_t^2 - \nabla_x\cdot\mu\nabla_x)\phi_{\bullet}$, we start with
\begin{equation}\label{key_4}
\p_{x_3} e^{i\phi_\bullet} = i\p_{x_3}\phi_\bullet e^{i\phi_\bullet}
= \pm \left(\sqrt{\abs{\p_{x'}\phi_{\bullet}}^2 -c^{-2}_{S}\abs{\p_{t}\phi_{\bullet}}^2}\right)e^{i\phi_\bullet},
\qquad \mbox{on }\Gamma.
\end{equation}
Therefore, $\p_{x_3} \phi_\bullet = \pm \sqrt{c^{-2}_{S}\abs{\p_{t}\phi_{\bullet}}^2 - \abs{\p_{x'}\phi_{\bullet}}^2}$ can be recovered from $c_S$ and the tangential derivatives of $\phi_{\bullet}$ on $\Gamma$.
In other words, $\p_{x_3} \phi_\bullet$ at $\Gamma$ is a $R_0$ term.
Taking one more derivative of (\ref{key_4}) in the normal direction we get
\begin{equation*}
\p^2_{x_3}\phi_\bullet = \frac{1}{\xi_{3, \bullet}}\left[-c_S^{-2}(\p_{x_3}\log c_S)\abs{\p_t \phi_\bullet}^2 + c_S^{-2}(\p_t \phi_\bullet)\p_{x_3}\p_t \phi_\bullet
- \langle \p_{x_3}\nabla_{x'}\phi_\bullet, \nabla_{x'}\phi_\bullet \rangle\right].
\end{equation*}
Here the last two terms above are determined by $c_S$, $\phi_{\bullet}$ and their tangential derivatives on $\Gamma$. Hence, we have
\begin{equation*}
\p^2_{x_3}\phi_\bullet =-(\p_{x_3}\log c_S)c_S^{-2}\xi_{3, \bullet}^{-1}\abs{\p_t \phi_\bullet}^2 + E_0, \quad \mbox{where }E_0\mbox{ is a }R_0\mbox{ term}.
\end{equation*}
If we take one more normal derivative of $\phi_{\bullet}$, then $\p_{x_3}E_0$ can have at most one derivative of $c_S$ as well as the term $c_S^{-2}\xi_{3, \bullet}^{-1}\abs{\p_t \phi_\bullet}^2$.
Thus, we obtain
\begin{equation*}
\p_{x_3}^3 \phi_\bullet = -(\p^2_{x_3}\log c_S)c_S^{-2}\xi_{3, \bullet}^{-1}\abs{\p_t \phi_\bullet}^2 + R_1.
\end{equation*}
In general, one obtains
\begin{equation}\label{normal_phi_k_wave}
\p_{x_3}^k \phi_\bullet = -(\p^{k-1}_{x_3}\log c_S)c_S^{-2}\xi_{3, \bullet}^{-1}\abs{\p_t \phi_\bullet}^2 + R_{k-2}.
\end{equation}
%
Now we calculate
\begin{align*}
\nabla_x \cdot \mu \nabla_x \phi_\bullet &= (\p_{x_3}\mu) \p_{x_3} \phi_\bullet + \mu \p^2_{x_3}\phi_\bullet + R_0\\
&= \p_{x_3}(\rho c_S^2)\xi_{3, \bullet}- \rho c_S^2(\p_{x_3}\log c_S)c_S^{-2}\xi_{3, \bullet}^{-1}(\p_t \phi_\bullet)^2 +R_0
\\
&= \rho( (\p_{x_3}\log \rho)c_S^2\xi_{3,\bullet}
+ (\p_{x_3}\log c_S)(2c_S^2\xi_{3,\bullet} - \xi_{3, \bullet}^{-1}(\p_t \phi_\bullet)^2)) + R_0.
\end{align*}
Also note that $\p^2_t \phi_{\bullet} = \p^2_t\left(-\tau t + x'\cdot\xi'\right) = 0$ on $\Gamma$.
Thus, from a direct calculation, we obtain
\begin{align*}
(1/2\rho)P\phi_{\bullet}
=& \frac{1}{2}\p_t^2 \phi_{\bullet} - \nabla_x\cdot\mu\nabla_x \phi_{\bullet}\\
=& -(1/2)( (\p_{x_3}\log \rho)c_S^2\xi_{3,\bullet}
+ (\p_{x_3}\log c_S)(2c_S^2\xi_{3,\bullet} - \xi_{3, \bullet}^{-1}(\p_t \phi_\bullet)^2)) + R_0,\\
&\quad \text{on}\quad \Gamma.
\end{align*}
Therefore, going back to \eqref{key_6} we get
\begin{align*}
c_S^2&\xi_{3,\bullet}\p_{x_3}(a_\bullet)_0\\
=& -(1/2)( (\p_{x_3}\log \rho)c_S^2\xi_{3,\bullet}
+ (\p_{x_3}\log c_S)(2c_S^2\xi_{3,\bullet} - \xi_{3, \bullet}^{-1}(\p_t \phi_\bullet)^2))(a_\bullet)_0 + R_0,\\
\end{align*}
so that
\begin{equation*}
\p_{x_3}(a_\bullet)_0
= -\left[(\p_{x_3}\log \sqrt \rho)
- \p_{x_3}\log c_S\left(1 - \frac{(\p_t \phi_\bullet)^2}{2c_S^2 \xi^2_{3,\bullet}}\right)\right](a_\bullet)_0 + R_0.
\end{equation*}
\end{proof}

\begin{rem}\label{rem:1}
	Observe that $\p_{x_3}(a_{I})_0$ and $\p_{x_3}(a_{R})_0$ are indeed $R_0$ terms, because $\rho^{(-)}$ and $c_{S}^{(-)}$ are known on the $\Omega_{-}$ region and so are $\p_{x_3}\rho^{(-)}$, $\p_{x_3}c_{S}^{(-)}$ on $\Gamma$. The rest of the terms in the expression of $\p_{x_3}(a_{I})_0$ and $\p_{x_3}(a_{R})_0$ can be determined from the $0$-th order transmission condition (\ref{zeroth_order_wave}). On the other hand $\p_{x_3}(a_{T})_0$ is not $R_0$ but $R_1$ since it involves $\rho^{(+)}$ and $c_{S}^{(+)}$, which cannot be determined from (\ref{zeroth_order_wave}).
\end{rem}

\begin{proof}[Proof of Lemma \ref{l: first derivative}]
We start with the transmission conditions for $(a_R)_{-1}$.
From (\ref{e: transmission conditions}) for $J=-1$, we get

\begin{align}\label{key}
(a_R)_{-1} =& \frac{1}{\mu^{(-)}\xi_{3, R} + \mu^{(+)}\xi_{3, T}}\left[\mu^{(-)}\p_{x_3}(a_I)_0 + \mu^{(-)}\p_{x_3}(a_R)_0 - \mu^{(+)}\p_{x_3}(a_T)_0 \right].
\end{align}

Note that $\mu^{(+)}$ can be determined by the $0$-th order transmission condition (see Lemma \ref{l: recover 2 params from 0'th order reflect}), therefore, $\left(\mu^{(-)}\xi_{3, R} + \mu^{(+)}\xi_{3, T}\right)^{-1}$ is a $R_0$ quantity.
Furthermore, thanks to Lemma \ref{l: acoustic partial_x_3 a_0 formula}, $\mu^{(-)}\p_{x_3}(a_I)_0$ and $\mu^{(-)}\p_{x_3}(a_R)_0$ are $R_0$, see Remark \ref{rem:1}.
From \eqref{key} and \eqref{e: acoustic partial_{x_3}a_0 formula} one obtains
\begin{equation*} (a_R)_{-1}
=\ R_0 + \left[\left(\p_{x_3}\log \sqrt {\rho^{(+)}}\right)
- \p_{x_3}\log c_S^{(+)}\left(1 - \frac{(\p_t \phi_T)^2}{2(c_S^{(+)})^2 \xi^2_{3,T}}\right)\right]\frac{(a_T)_0}{R_0}.
\end{equation*}

 We denote $f=f(\abs{\xi'}/\tau) = \left(1 - \frac{(\p_t \phi_T)^2}{2(c_S^{(+)})^2 \xi^2_{3,T}}\right)$. If we have $\tilde{\rho}^{(-)} = \rho^{(-)}$, $\tilde{\mu}^{(-)} = \mu^{(-)}$ on $\Omega_{-}$ and $\tilde{R} = R$ on $\Gamma$, then one gets $\tilde{R}_0 = R_0$ and $(\tilde{a}_R)_{-1} = (a_R)_{-1}$ on $\Gamma$.
Therefore, we obtain

\begin{align*}
R_0 + &\left[\left(\p_{x_3}\log \sqrt {\rho^{(+)}}\right)
- \p_{x_3}\log c_S^{(+)}\left(1 - \frac{(\p_t \phi_T)^2}{2(c_S^{(+)})^2 \xi^2_{3,T}}\right)\right]\frac{(a_T)_0}{R_0}\\
=& R_0 + \left[\left(\p_{x_3}\log \sqrt {\tilde{\rho}^{(+)}}\right)
- \p_{x_3}\log \tilde{c}_S^{(+)}\left(1 - \frac{(\p_t \tilde{\phi}_T)^2}{2(\tilde{c}_S^{(+)})^2 \tilde{\xi}^2_{3,T}}\right)\right]\frac{(a_T)_0}{R_0},\qquad \mbox{on}\quad \Gamma
\end{align*}

This implies
\begin{equation}\label{key_8}
\qquad
\left(\p_{x_3}\log \sqrt {\frac{\rho^{(+)}}{\tilde{\rho}^{(+)}}}\right)
=\left(\p_{x_3}\log c_S^{(+)}\right)f - \left(\p_{x_3}\log \tilde{c}_S^{(+)}\right)\tilde{f},\qquad \mbox{on}\quad \Gamma.
\end{equation}
Observe that $f$ is a $R_0$ quantity so that $f=\tilde{f}$.
Furthermore, $\rho^{(+)}$, $\tilde{\rho}^{(+)}$ depends only on $x$, hence by varying $(\abs{\xi'}/\tau)$ we obtain
\begin{equation}\label{key_7}
\left(\p_{x_3}\log \frac{c_S^{(+)}}{\tilde{c}_S^{(+)}}\right)f
= \left(\p_{x_3}\log \frac{c_S^{(+)}}{\tilde{c}_S^{(+)}}\right)f_1,
\qquad \mbox{on}\quad \Gamma,
\end{equation}
where $f$ and $f_1$ are evaluated in different values of $\abs{\xi'}/\tau$.
Note that $c_S^{(+)} = \tilde{c}_S^{(+)}$ on $\Gamma$ (see Lemma \ref{l: recover 2 params from 0'th order reflect}).
If we take two values of $\abs{\xi'}/\tau$ such a way that $f \neq f_1$ on $\Gamma$, then \eqref{key_7} implies
\begin{equation*}
\left(\p_{x_3}\log \frac{c_S^{(+)}}{\tilde{c}_S^{(+)}}\right)
= 0,
 \ \ \mbox{    so that    } \ \
\p_{x_3}c_S^{(+)} = \p_{x_3}\tilde{c}_S^{(+)} \quad \mbox{on }\Gamma.
\end{equation*}
Going back to \eqref{key_8} we obtain $\p_{x_3}\rho^{(+)} = \p_{x_3}\tilde{\rho}^{(+)}$ and thus $\p_{x_3}\mu^{(+)} = \p_{x_3}\tilde{\mu}^{(+)}$ on $\Gamma$.
%
\end{proof}

\begin{proof}[Proof of Lemma \ref{l: acoustic higher derivatives derivation}]
We prove this lemma via an iterative argument.
First we note that for $J=0,-1$ we already have Lemma \ref{l: first derivative} and Lemma \ref{l: first derivative}.

In order to prove the lemma for $J<-1$ we study the transport equation \eqref{e: transport equations}.
For $J<0$, in the transport equations \eqref{e: transport equations}, we encounter the term $P(t,x,D_{t,x})(a_\bullet)_J = \rho\p_t^2(a_\bullet)_J - \nabla \cdot \mu \nabla (a_\bullet)_J$.
We calculate
\begin{align*}
\nabla \cdot \mu \nabla (a_\bullet)_0
=& (\p_{x_3}\mu) \p_{x_3}(a_\bullet)_0  + \mu \p^2_{x_3}(a_\bullet)_0 + R_0\\
=& \mu \p^2_{x_3}(a_\bullet)_0 + R_1 \\
=&-\mu \left[(\p^2_{x_3}\log \sqrt \rho)
 +\p^2_{x_3}\log c_S\left(1 - \frac{(\p_t \phi_\bullet)^2}{2c_S^2 \xi^2_{3,\bullet}}\right)\right](a_\bullet)_0\\
&-\mu\left[(\p_{x_3}\log \sqrt \rho)
 +\p_{x_3}\log c_S\left(1 - \frac{(\p_t \phi_\bullet)^2}{2c_S^2 \xi^2_{3,\bullet}}\right)\right]^2(a_\bullet)_0
+R_1
\end{align*}
Using the equation earlier for $\p_{x_3}(a_\bullet)_0$, we see the second term is in fact $R_1$. Thus, we obtain
\begin{equation}\label{key_9}
P(t,x,D_{t,x})(a_\bullet)_0
= \mu \left[(\p^2_{x_3}\log \sqrt \rho)
 +\p^2_{x_3}\log c_S\left(1 - \frac{(\p_t \phi_\bullet)^2}{2c_S^2 \xi^2_{3,\bullet}}\right)\right](a_\bullet)_0 + R_1
\end{equation}
Now, from the transport equation \eqref{e: transport equations} and the relation \eqref{key_5} ,\eqref{key_9} we get
\begin{equation}\label{key_10}
\p_{x_3}(a_\bullet)_{-1}
= -i/(2\xi_{3,\bullet}) \left[(\p^2_{x_3}\log \sqrt \rho)
 +\p^2_{x_3}\log c_S\left(1 - \frac{(\p_t \phi_\bullet)^2}{2c_S^2 \xi^2_{3,\bullet}}\right)\right](a_\bullet)_0 + R_1.
\end{equation}

Since the microlocal transmission conditions \eqref{e: transmission conditions} helps us to connect $(a_R)_{-2}$ to $\p_{x_3}(a_T)_{-1}$,
using the same argument as in Lemma \ref{l: first derivative} we see that $(a_R)_{-2}$ uniquely determines $\p^2_{x_3} \rho^{(+)}$ and $\p^2_{x_3} \mu^{(+)}$ at $\Gamma$.
Iterating the above argument gives us
\begin{equation}\label{e: p_x_3 (a)_J higher order}
\p_{x_3}(a_\bullet)_J
= (-i/(2\xi_{3,\bullet}))^J \left[(\p^{\abs{J}+1}_{x_3}\log \sqrt \rho)
 +\p^{\abs{J}+1}_{x_3}\log c_S\left(1 - \frac{(\p_t \phi_\bullet)^2}{2c_S^2 \xi^2_{3,\bullet}}\right)\right](a_\bullet)_0 + R_{\abs{J}}.
\end{equation}

Then we get from the $|J|$-th order transmission conditions
\[
\p_{x_3}(a_T)_{J+1} = R_{\abs{J+1}}(R_{\abs{J+1}} - (a_R)_{J}) \text{ on } \Gamma
\]
so that
\begin{align}
(a_R)_J &= -\frac{\p_{x_3}(a_T)_{J+1}}{R_{\abs{J+1}}} + R_{\abs{J+1}} \nonumber \\
&= -(-i/(2\xi_{3,T}))^J \left[(\p^{\abs{J}}_{x_3}\log \sqrt {\rho^{(+)}}) \right.\\ \nonumber
&\qquad \qquad \qquad
 \left.+\p^{\abs{J}}_{x_3}\log c^{(+)}_S\left(1 - \frac{(\p_t \phi_T)^2}{2c_S^2 \xi^2_{3,T}}\right)\right]\frac{(a_T)_{J+1}}{R_{\abs{J+1}}} + R_{\abs{J+1}}
\end{align}

Using the same argument as above, and noting that the transmission conditions already determine $(a_T)_{J+1}$ from knowledge of $(a_R)_{J+1}$, shows that $(a_R)_J$ determines $\p^{\abs{J}}_{x_3} \rho^{(+)}$ and $\p^{\abs{J}}_{x_3} \mu^{(+)}$ at $\Gamma$.
\end{proof}

This completes the proof of Theorem \ref{Th_main_acoustic}.
The essential piece to make this work is verifying that $(a_R)_J$ at $\Gamma$ only depends on at most $|J|$ normal derivatives of the material parameters using the transmission conditions to continue unique recovery inductively.
We finish this section by the following remark.
\begin{rem}
Note that the recovery of the parameters on the boundary is obtained directly from the principal symbol of the reflection operator $R$, whereas recovering the higher order derivatives one relies on the recursive equations obtained from the interface conditions on the asymptotes of the geometric optics solutions.
\end{rem}

 \bibliography{ScatteringControl}
\end{document}